\definecolor{darkblue}{rgb}{0.0,0,0.7} % darkblue color
\definecolor{darkred}{rgb}{0.7,0,0} % darkred color
\def\defn#1{{\sf #1}}
\newcommand{\RR}{\mathbb R}
\newcommand{\ZZ}{\mathbb Z}
\newcommand{\spanz}{\mathrm{span}_{\mathbb{Z}}}
\newcommand{\spanr}{\mathrm{span}_{\mathbb{R}}}
\DeclareMathOperator{\Red}{Red}
\DeclareMathOperator{\idop}{id}
\DeclareMathOperator{\fin}{fin}
\DeclareMathOperator{\Redd(c)}{Red_T(c)^{gen}}
\DeclareMathOperator{\GL}{GL}
\DeclareMathOperator{\TR}{tr}
\DeclareMathOperator{\END}{End}
\DeclareMathOperator{\Z}{\mathbb{Z}}
\DeclareMathOperator{\NN}{\mathbb{N}}
\DeclareMathOperator{\XX}{\mathbb{X}}
\DeclareMathOperator{\COH}{coh}
\DeclareMathOperator{\ARS}{\Phi_{aff}}
\DeclareMathOperator{\FRS}{\Phi_{fin}}
\DeclareMathOperator{\VFIN}{V_{fin}}
\DeclareMathOperator{\VAFF}{V_{aff}}
\DeclareMathOperator{\Thi}{Thick}
\DeclareMathOperator{\cox}{cox}
\DeclareMathOperator{\aff}{aff}
\newtheorem{theorem}{Theorem}[section]
\newtheorem{corollary}[theorem]{Corollary}
\newtheorem{Proposition}[theorem]{Proposition}
\newtheorem{Lemma}[theorem]{Lemma}
\theoremstyle{definition}
\newtheorem{Definition}[theorem]{Definition}
\newtheorem{lemdef}[theorem]{Lemma and Definition}
\newtheorem{remark}[theorem]{Remark}
\newtheorem{hyp}[theorem]{Hypothesis}
\newtheorem*{remarkOHNE}{Remark}
   \def\MR#1{}
\title{The hyperbolic cover of an elliptic Weyl group}
\author[B.~Baumeister]{Barbara Baumeister}
\address{Barbara Baumeister, Universit\"at Bielefeld, Germany}
\email{b.baumeister@math.uni-bielefeld.de}
\thanks{}
\author[P.~Wegener]{Patrick Wegener}
\address{Patrick Wegener}
\email{pat-weg09@gmx.de}
\subjclass[2010]{Primary 06B15, 05E10, 20F55, 05E18}
\keywords{category of coherent sheaves, Coxeter groups, extended Weyl groups, extended Coxeter groups, Hurwitz action, thick subcategories}
\date{\today}
\begin{document}

\newcolumntype{C}[1]{>{\centering\arraybackslash}m{#1}}

\begin{abstract}
In this paper, we study in detail the hyperbolic covers $\tilde{W}$  and $\hat{W}$ of an elliptic Weyl system introduced by Saito \cite{Sai85}. We show that they are isomorphic and also isomorphic to an extended Coxeter system of star type. For $\tilde{c}$ a Coxeter transformation in $\tilde{W}$ we can conclude the Hurwitz transitivity of the braid group action on the set of reduced reflection factorizations of $\tilde{c}$  from the Hurwitz transitivity in extended Coxeter systems of star type, which is shown in \cite{BWY24}. This then enables us to establish for a weighted projective line $\XX$ of tubular type \cite{GL87, Len96} an order preserving bijection between the poset of thick subcategories of $\COH(\mathbb{X})$ generated by an exceptional sequence and the poset $[\idop, \tilde{c}]$ ordered by the absolute order. In an Appendix, we study the hyperbolic cover of a Coxeter system.
\end{abstract}
\maketitle

\setcounter{tocdepth}{1}
\tableofcontents

\section{Introduction}\label{Sec:Intro2}

This note is part of the  systematic study of \defn{extended Weyl groups} $W$ and \defn{extended Weyl systems} (see \cite{BWY24, BWY21, BMN24}). An extended Weyl group is a reflection group, that is  a group generated by reflections of a finite dimensional  $\RR$-space $V$, which is equipped with a bilinear form (see \cite{BWY24}). There are three different types of extended Weyl groups, those of \defn{domestic}, \defn{tubular} and of \defn{wild} type corresponding to the three possible types of the related bilinear form. The extended Weyl groups of domestic type are precisely the simply-laced affine Coxeter groups (see Remark~2.5 (c) of \cite{BWY24}), and those of tubular type are \defn{ elliptic Weyl groups} (see \cite{BWY21, KL86, Sai85}). 

Extended Weyl groups share several properties with Coxeter groups. For instance, for every extended Weyl group the vector space $V$ contains a set of `roots' that form a  generalized root system, and which contains a basis consisting of roots, such that the set $S$ of reflections with respect to this set of roots is a  `simple system` for $W$ (see Section~\ref{Subsec:EllipticWeylgroup}). 

The motivation of studying extended Weyl groups is manifold. One is the interest in simple elliptic or hyperbolic singularities whose monodromy groups are extended Weyl groups (for an overview, see \cite{Ebe19}, or also \cite{Sai85}). 

Another interest is in representation theory of algebras for hereditary categories, whose related Weyl groups turn out to be extended Weyl groups (see \cite{GL87, Len96, STW16, BWY21}). Yet another motivation is the interest in the Artin groups of extended type and the $K(\pi,1)$-conjecture (see \cite{Del72, ChaDa94, PS19, Lek, Yamada, BMN24}).

For these questions the Coxeter transformation $c \in W$, which is an analog of a Coxeter element in a Coxeter group, and  the set of reflections $T = \cup_{w \in W} w^{-1}Sw \subseteq W$  in $W$ (see Section~\ref{Sec:Reflections})  play a special role as well as an action of the braid group on the set of reduced reflection factorizations Red$_T(c)$, the so called Hurwitz action (or dual Matsumoto property).
 The \defn{Hurwitz action} of the braid group $\mathcal{B}_n$ on $n$ strands on $T^n$  is defined by
\begin{align*}
	\sigma_i (t_1 ,\ldots , t_n )      & = (t_1 ,\ldots , t_{i-1} , \hspace*{5pt} \phantom
	{t_i}t_{i+1}\phantom{g_i^{-1}}, \hspace*{5pt} t_{i+1}^{-1}t_it_{i+1}, \hspace*{5pt} t_{i+2} ,
	\ldots , t_n),\\
	\sigma_i^{-1} (t_1 ,\ldots , t_n ) & = (t_1 ,\ldots , t_{i-1} , \hspace*{5pt} t_i t_{i+1} t_i^{-1},
	\hspace*{5pt} \phantom{t_{i+1}}t_i\phantom{t_{i+1}^{-1}}, \hspace*{5pt} t_{i+2} ,
	\ldots , t_n).
\end{align*}
for the standard generator $\sigma_{i}\in \mathcal{B}_n$, for $1\leq i \leq n-1$ and $(t_{1},\ldots,t_{n})$ an element of $T^{n}$. 

 For the mentioned applications, the Hurwitz action is needed to be transitive on Red$_T(c)$ (see \cite{Ebe19, HK16, BWY21, Bes03}).
 If $W$ is of domestic or wild type, then the action is transitive (see \cite{ BWY24}), while this is not true for the elliptic Weyl systems (see \cite{BWY21}). There, the Hurwitz action is transitive on the proper subset $\Redd(c)$ of Red$_T(c)$ and not on Red$_T(c)$; and this is even not true for the elliptic Weyl group of type $D_4^{(1,1)}$, as  Charly Schwabe later oberserved (see \cite{BWY21})\footnote{Transitivity had mistakenly been claimed  for $D_4^{(1,1)}$ in a previous version of \cite{BWY21}.}

This paper  presents  a solution to this obstruction and thereby  makes possible  a uniform proof of the Hurwitz transitivity. The idea is to move from the elliptic Weyl system 
to its hyperbolic covers as has been proposed to us by Kyoji Saito.
%\footnote{Independently, Jon McCammond also developed this idea.}
Having an extended Weyl system that is not elliptic
we proceeded as follows.
 In \cite[Section~5]{BWY24} we showed that every 
 extended Weyl system of domestic or wild type
 is an extended Coxeter system. Extended Coxeter groups have been introduced by Looijenga \cite{Looi80}, and have also been studied 
  by van der Lek \cite{Lek}, as groups of affine transformations (see Section~\ref{Sec:ExtCox}). An extended Coxeter group is a split extension of a crystallographic Coxeter group by its root lattice. The Coxeter groups that appear in 
the extended Weyl groups of domestic or wild type  have a Dynkin diagram, which is a star \cite{BWY24} (for the notion of a star see Subsection~\ref{Sec:Notation}). Therefore, we say that
the related extended Coxeter systems are of \defn{star type}.
 For those groups it is shown uniformly:

  \begin{theorem}\label{Thm:11in 4}\cite[Theorem~8.6]{BWY24}
Let $(\mathcal{W}, \mathcal{S})$ be an extended  Coxeter 
system  of star type with set of reflections $\mathcal{T}$ and Coxeter transformation $c$. Then the Hurwitz action is transitive on 
the set of reduced reflection factorizations $\Red_{\mathcal{T}}(c)$. 
  \end{theorem}
  
Here we focus on elliptic Weyl systems,  or more precisely, mostly on the important subclass of the elliptic Weyl systems whose diagrams only have simple bonds and which are of codimension 1 (see \cite[Section 12]{Sai85},\cite{Sai90, Yamada}). They  are those that are relevant for our applications. We  call them \defn{tubular elliptic Weyl systems} (see Section~\ref{Subsec:EllipticWeylgroup}).

We pursue the following strategy.
First, we  recall the elliptic Weyl systems (Section~\ref{sec:Notation}).
Then we will discuss  in detail the hyperbolic covers of an elliptic Weyl system (they appeared under the name hyperbolic extension in \cite{Sai85, ST97}). Also, for further use, we carefully construct the covers $(\tilde{W}, \tilde{S})$ and $(\hat{W}, \hat{S})$ (Sections~\ref{Sec:HyperbolicExtended} and \ref{sec:HypCovers}), and show that they are isomorphic and a central non-split extension of $W$ (Proposition~\ref{StructureTildeW}). We prove that the (unique up to isomorphism) cover is not a Coxeter system for any simple system (Theorem~\ref{Thm:NotCoxeter}). 

As the hyperbolic cover lives in a space with a non-degenerate bilinear form we can easily determine the reflection
length of a Coxeter transformation in $\tilde{W}$, and  thereby we get a new proof for the reflection length $\ell_T(c)$ of a Coxeter transformation $c$ in $W$.

\begin{Proposition}\label{Lem:redCoxElt}
	Let $(W,S)$ be a tubular elliptic Weyl system and $(\tilde{W}, \tilde{S})$ be its hyperbolic cover. Then  $ \ell_{\tilde{T}}(\tilde{c})$ equals $\ell_T(c) $, and $\ell_T(c) $  equals the rank $|S|$ of the elliptic Weyl system.
\end{Proposition}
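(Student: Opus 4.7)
The strategy is to sandwich both reflection lengths between $|S|$ and $|S|$. Writing $\pi\colon \tilde{W} \twoheadrightarrow W$ for the covering projection of Section~\ref{Sec:HyperbolicCover}, we have $\pi(\tilde{S}) = S$, $\pi(\tilde{T}) = T$, and $\pi(\tilde{c}) = c$. The inequality $\ell_T(c) \leq |S|$ is immediate from the canonical factorization $c = s_1 \cdots s_{|S|}$ into simple reflections, and analogously $\ell_{\tilde{T}}(\tilde{c}) \leq |S|$ from the corresponding factorization in $\tilde{W}$. Projecting any reduced factorization of $\tilde{c}$ through $\pi$ gives $\ell_T(c) \leq \ell_{\tilde{T}}(\tilde{c})$ for free. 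What remains is to prove the two nontrivial bounds: the lower bound $\ell_{\tilde{T}}(\tilde{c}) \geq |S|$ and the lifting inequality $\ell_{\tilde{T}}(\tilde{c}) \leq \ell_T(c)$; together with the trivial upper bounds these force $\ell_T(c) = \ell_{\tilde{T}}(\tilde{c}) = |S|$.

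For the lower bound I would use the classical moved-space argument. Every $\tilde{t} \in \tilde{T}$ has a one-dimensional moved subspace $(1-\tilde{t})(\tilde{V})$, so any reflection factorization $\tilde{c} = \tilde{t}_1 \cdots \tilde{t}_k$ satisfies $\dim(1-\tilde{c})(\tilde{V}) \leq k$. It is therefore enough to show $\Fix_{\tilde{V}}(\tilde{c}) = 0$. The hyperbolic cover is designed precisely so that the underlying bilinear form on $\tilde{V}$ is non-degenerate, in contrast to $V$ whose $2$-dimensional radical is fixed pointwise by $c$ and causes the moved-space estimate on $V$ to fall two short of $|S|$. Computing the characteristic polynomial of $\tilde{c}$ from the extended Coxeter-Dynkin diagram of tubular type isolates its eigenvalues and rules out the eigenvalue $1$, whence $(1-\tilde{c})$ is invertible on $\tilde{V}$ and the moved space has full dimension equal to $|S|$, generalizing Carter's classical computation.

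For the lifting inequality, start with a reduced factorization $c = t_1 \cdots t_m$ and choose arbitrary reflection preimages $\tilde{t}_i \in \tilde{T}$ of the $t_i$. The product $\tilde{t}_1 \cdots \tilde{t}_m$ projects to $c$, hence lies in $\tilde{c} \cdot \ker \pi$, and so equals $\tilde{c} \cdot z$ for some central $z \in \ker \pi$; the task is to absorb $z$ by re-choosing the lifts. Here one exploits that $\ker \pi$ is a controllable central subgroup, thanks to the structural description of $\tilde{W}$ in Proposition~\ref{StructureTildeW}, and that the coset of lifts $\pi^{-1}(t_i) \cap \tilde{T}$ of each $t_i$ differs only by prescribed central elements. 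The main obstacle of the proof is precisely this bookkeeping: showing that the central defect $z$ can always be cancelled by a legitimate change of lifts without leaving $\tilde{T}$. A secondary delicate point is the explicit eigenvalue computation $1 \notin \operatorname{Spec}(\tilde{c} \mid \tilde{V})$ for the tubular types, which depends on the concrete form of $\tilde{c}$ and the combinatorics of the extended Coxeter-Dynkin diagram.
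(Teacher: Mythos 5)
Your overall skeleton (trivial upper bounds plus a lower bound and a lifting inequality) is reasonable, but both of the two ``nontrivial'' steps rest on claims that are false in this setting, so the proposal has genuine gaps rather than being a variant of the paper's argument.

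First, the lower bound. You propose to show $\Fix_{\tilde{V}}(\tilde{c})=0$ by ruling out the eigenvalue $1$. This cannot work: every generating reflection $\tilde{s}_\alpha$ has $\alpha\in\Phi\subseteq V$, and every root is orthogonal to the $2$-dimensional radical $R$ of the form on $V$, so $\tilde{c}$ fixes $R\subseteq V\subseteq\tilde{V}$ pointwise and $1$ is an eigenvalue of multiplicity at least $2$ (this is exactly Lemma~\ref{lem:fix_c}, which computes $C_{\hat{V}}(\hat{c})=R$). Note also that your premise is internally inconsistent: $\dim\tilde{V}=n+3=|S|+1$, so a trivial fixed space would force $\ell_{\tilde{T}}(\tilde{c})\geq n+3$, contradicting the upper bound $|S|=n+2$. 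Moreover the form on $\tilde{V}$ is \emph{not} non-degenerate (its signature is $(n+1,1,1)$); only $\hat{V}$ is. The naive moved-space count in $\tilde{V}$ gives only $(n+3)-2=n+1$, one short. The paper's fix is to work in the non-degenerate space $\hat{V}$ of dimension $n+4$, where the fixed space is still $R$ but the moved space $R^\perp=V$ has dimension $n+2$ and is not totally isotropic, so Scherk's Theorem~\ref{thm:scherk} yields $\ell_{\hat{T}}(\hat{c})\geq n+2$. You need this passage to $\hat{V}$ (via Proposition~\ref{StructureTildeW}(b)); the eigenvalue computation you propose is not a ``secondary delicate point'' but a false statement.

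Second, the lifting inequality. Your plan to ``absorb the central defect $z$ by re-choosing the lifts'' is blocked by Lemma~\ref{Lem:ReflectionsW-tildeW}(a): each $t\in T$ has a \emph{unique} preimage in $\tilde{T}$ (the other elements of $\varphi^{-1}(t)$ are not reflections), so there is nothing to re-choose, and the defect genuinely cannot be cancelled. Indeed, the computation in the proof of Corollary~\ref{Cor:HurwitzW-tildeW} exhibits, in type $D_4^{(1,1)}$, a reduced factorization of $c$ whose unique lift multiplies to $\tilde{c}z^{-1}\neq\tilde{c}$. The paper circumvents this via Lemma~\ref{Lem:lengthtildeW-W}: every reduced factorization of $c$ lifts to a reduced factorization of \emph{some} central translate $\tilde{c}z^{j}$, and one checks by the same Scherk argument that $C_{\hat{V}}(\hat{c}\hat{z}^{j})=R$ for all $j$, hence $\ell_{\tilde{T}}(\tilde{c}z^{j})=n+2$ for every $j$, which forces $\ell_T(c)=n+2$ as well. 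Without an argument covering all the translates $\tilde{c}z^{j}$, your lifting step does not close.
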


We will show that the
 hyperbolic cover of a tubular elliptic Weyl system is an extended Coxeter system of star type, and also that every extended Coxeter system of star type with affine Coxeter subgroup is the hyperbolic cover of a tubular elliptic
 Weyl system (Theorem~\ref{Prop:CharExtendedWeylNotTubular}). 
From this it follows that the extended Coxeter systems of star type and the hyperbolic covers
of extended Weyl groups describe the same objects (see \cite[Theorem~5.3]{BWY24}).
Also, as a consequence of Theorem~\ref{Prop:CharExtendedWeylNotTubular} Hurwitz transitivity concludes from
 Theorem~\ref{Thm:11in 4}.

 \begin{theorem} \label{thm:MainElliptic}
	Let $\Phi$ be a tubular elliptic root system, $B =B(\Phi)$ a tubular elliptic root basis, and $\tilde{c}$ a Coxeter transformation with respect to $B$ in the hyperbolic cover $ \tilde{W}$. Then the Hurwitz action is transitive on the set of reduced reflection factorizations $\Red_{\tilde{T}}(\tilde{c})$ of $\tilde{c}$ in $\tilde{W}$.
\end{theorem}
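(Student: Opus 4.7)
My plan is to proceed by induction on the rank $n=|\tilde S|$, following the standard Bessis-style reduction: show that any reflection appearing in a reduced factorization of $\tilde c$ can be Hurwitz-moved to the first position, and then pass to the factorization of the remaining product inside a smaller-rank reflection subgroup. By Proposition~\ref{Lem:redCoxElt} every element of $\Red_{\tilde T}(\tilde c)$ is a tuple of length exactly $n$, and Hurwitz braids preserve both the product and the reflection-closure of the entries, so the action restricts to $\Red_{\tilde T}(\tilde c)$; the task is to rule out the existence of multiple orbits.

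I would split the inductive step into two lemmas. First, a \emph{moving a reflection to the front} lemma: for any $\tilde r$ occurring among the entries of some $(\tilde t_1,\dots,\tilde t_n)\in\Red_{\tilde T}(\tilde c)$, a sequence of Hurwitz braids transforms the factorization into one whose first letter is $\tilde r$. Second, given two reduced factorizations of $\tilde c$ sharing first letter $\tilde r$, the truncations are reduced $\tilde T$-factorizations of $\tilde r\tilde c$, which has reflection length $n-1$ by Proposition~\ref{Lem:redCoxElt}, lying inside the reflection subgroup $\tilde W'\subseteq\tilde W$ generated by the remaining entries. Hurwitz transitivity in $\tilde W'$ then finishes the step. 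For the tubular diagrams $D_4^{(1,1)}$, $E_6^{(1,1)}$, $E_7^{(1,1)}$, $E_8^{(1,1)}$, I expect $\tilde W'$ to turn out to be an affine simply-laced Coxeter group (possibly in direct product with a finite one), where Hurwitz transitivity on reduced Coxeter-element factorizations is already known (Lewis--Reiner; Baumeister--Gobet--Roberts--Wegener; Wegener); the finite base case is Bessis.

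The main obstacle is the ``moving to the front'' lemma itself. I plan to attack it by translating each reduced factorization into its tuple of associated roots in the elliptic root system $\Phi$, and then inducting on the position of $\tilde r$, swapping it one slot to the left at a time via an explicit Hurwitz braid $\sigma_i^{\pm 1}$. The payoff of working in the hyperbolic cover rather than in $W$ itself is that the bilinear form on $\spanr(\tilde S)$ has one-dimensional radical, rather than the two-dimensional radical of the elliptic form on $W$: the geometry is then genuinely affine-like, so classical root-theoretic arguments apply. Verifying that each intermediate swap yields a tuple that is still a reduced factorization---i.e.\ that no length reduction is induced along the way---is the technically delicate point, and is where most of the work will sit; I expect to handle it by case analysis on the relative positions of the two roots being swapped, together with the structural information on reflection subgroups of $\tilde W$ developed in Section~\ref{Sec:HyperbolicCover}. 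Once the ``moving to the front'' lemma and the identification of $\tilde W'$ are in place, the inductive wrap-up is immediate.
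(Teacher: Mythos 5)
Your reduction has a genuine gap at its core, and two of its preliminary observations are also off. First, a misconception: the roots underlying $\tilde{T}$ all lie in $V\subset\tilde{V}$, and the span of the roots attached to $\tilde{S}$ is all of $V$, where the restriction of $(-\mid-)$ still has the two\nobreakdash-dimensional radical $R$; there is no ``one-dimensional radical on $\spanr(\tilde{S})$''. What the hyperbolic cover buys is not better root geometry but a more faithful linear action: the central element $z=\tilde{E}(a\otimes b)$ acts nontrivially on $\tilde{V}$ though trivially on $V$, so factorizations that collapse together in $W$ are separated in $\tilde{W}$ (Proposition~\ref{StructureTildeW}, Lemma~\ref{Lem:ReflectionsW-tildeW}). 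Second, your ``moving to the front'' lemma needs no work at all: a Hurwitz move never changes the number of factors or their product, so every intermediate tuple automatically stays in $\Red_{\tilde{T}}(\tilde{c})$, and successive applications of $\sigma_{i-1},\sigma_{i-2},\ldots,\sigma_{1}$ transport the $i$-th entry to the first slot unchanged. The delicacy you localize there is not where the difficulty lives.

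The real gap is the inductive step. Once the first letter $\tilde{r}$ is fixed, the truncation is indeed a reduced $\tilde{T}$-factorization of $\tilde{r}^{-1}\tilde{c}$, but (i) the reflection subgroup $\tilde{W}'$ generated by the remaining $n+1$ entries depends on the chosen factorization, not only on $\tilde{r}$ and $\tilde{c}$ --- controlling which subgroups can occur is essentially the whole problem; and (ii) even when $\tilde{W}'$ is an affine Coxeter group, $\tilde{r}^{-1}\tilde{c}$ need not be a (parabolic) Coxeter element of it, and Hurwitz transitivity on reduced reflection factorizations of an \emph{arbitrary} element of an affine --- or even finite --- Coxeter group is false in general, so ``transitivity in $\tilde{W}'$'' is not something you may simply cite. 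The two-orbit phenomenon for generating factorizations of $c$ in type $D_4^{(1,1)}$, recorded right after the proof in the paper, shows that exactly this kind of failure occurs one floor below. The paper avoids the induction altogether: it projects a given factorization along $p_U$ into the affine Coxeter group $\overline{W}\cong W_b$, normalizes the images there using \cite[Lemma~2.3]{WY19} and Hurwitz transitivity for parabolic Coxeter elements \cite[Theorem~1.3]{BDSW14}, and then pins down the remaining radical data $k_i,\ell,\overline{k}$ by explicit computation in the normal form of Corollary~\ref{Cor:NormalFormTildeW} (Lemmas~\ref{lem:lambda_t}--\ref{beta=alphat}, plus an explicit braid in type $D_4^{(1,1)}$). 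To salvage your approach you would have to prove, not assume, that all truncations land in a single well-understood affine subgroup with $\tilde{r}^{-1}\tilde{c}$ a Coxeter element of it.
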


We also obtain the Hurwitz orbits of the braid group on $\Red_T(c)$ %Lemma~\ref{Lem:lengthtildeW-W}, the proof of \ref{Lem:redCoxElt} and 
(Corollary~\ref{Cor:HurwitzW-tildeW}).

As an application of Proposition~\ref{Lem:redCoxElt} and Theorem~\ref{thm:MainElliptic} we prove the following theorem, whose notation can be found in the last section of \cite{BWY21}. It presents the new uniform approach  of an isomorphism of posets between the poset of subobjects of a hereditary category and a
combinatorial poset $[\idop,c] \subset W$ of the related extended Weyl group (see Section~\ref{Sec:IntervalPosets}) formulated for a
weighted projective line of tubular type, which is extending results of \cite{HK16}.

\begin{theorem} \label{conj:WeightProjElliptic}
	Let $\XX$ be a weighted projective line of tubular type over an algebraically closed field of characteristic zero with corresponding elliptic Weyl group $W$,  and hyperbolic cover $\tilde{W}$ of $W$, with set of reflections $\tilde{T}$, and Coxeter transformation 
	 $\tilde{c} \in \tilde{W}$. Then there exists an order preserving bijection between
	\begin{itemize}
		\item the poset of thick subcategories of $\COH(\XX)$ that are generated by an exceptional sequence, ordered by inclusion; 
		and 
		\item the poset $[\idop, \tilde{c}]$, ordered by the absolute order.
	\end{itemize}
\end{theorem}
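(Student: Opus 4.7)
The plan is to follow the template used for the Euclidean/domestic case in \cite{HK16} and \cite{BWY24}, transferred to the hyperbolic cover setting using the two preceding results as the key inputs. The underlying bridge is the classical correspondence (going back to Crawley-Boevey and Ringel, and used in \cite{BWY21}) between exceptional sequences in $\COH(\XX)$ and ordered reflection factorizations acting on the numerical Grothendieck group, which for $\XX$ of tubular type carries the structure of a tubular elliptic root lattice. The first step is therefore to set up a lift of this correspondence to $\tilde{W}$: each exceptional object $E$ determines a reflection $t_{E}$, and the need to lift consistently to $\tilde{T}$ (rather than to $T$) is what forces the passage from $W$ to $\tilde{W}$. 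Once this lift is in place, an exceptional sequence $(E_{1},\ldots,E_{n})$ of full length $n=\rk(W)$ produces a factorization $\tilde{c}=t_{E_{1}}\cdots t_{E_{n}}$, which is reduced by Proposition~\ref{Lem:redCoxElt} since $n = \ell_{\tilde{T}}(\tilde{c})$.

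Next, I would define the map $\Psi$ sending a thick subcategory $\mathcal{T}\subseteq\COH(\XX)$ generated by an exceptional sequence $(E_{1},\ldots,E_{k})$ to the partial product $t_{E_{1}}\cdots t_{E_{k}}\in\tilde{W}$. Extending such a sequence to a full exceptional sequence of $\COH(\XX)$ exhibits this partial product as a prefix of a reduced factorization of $\tilde{c}$, so that its value lies automatically in $[\idop,\tilde{c}]$. The crucial well-definedness step is where Theorem~\ref{thm:MainElliptic} enters: if $(E_{1},\ldots,E_{k})$ and $(E_{1}',\ldots,E_{k}')$ both generate $\mathcal{T}$, complete each to a full exceptional sequence; the two resulting reduced factorizations of $\tilde{c}$ are Hurwitz equivalent by Theorem~\ref{thm:MainElliptic}, and a standard parabolic argument (the subgroup generated by the first $k$ factors of a reduced $\tilde{T}$-factorization depends only on its prefix product, and the prefix product is determined by the thick subcategory through the span of the exceptional classes in the root lattice) yields $t_{E_{1}}\cdots t_{E_{k}}=t_{E_{1}'}\cdots t_{E_{k}'}$.

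For surjectivity, any $w\in[\idop,\tilde{c}]$ admits a reduced factorization $w=t_{1}\cdots t_{k}$ that extends to a reduced factorization of $\tilde{c}$; the correspondence converts this into an exceptional sequence whose first $k$ terms generate a thick subcategory mapping to $w$. Injectivity and order preservation then follow from reconstructing $\mathcal{T}$ as the perpendicular complement of the $\tilde{T}$-fixed sublattice of $w$, or equivalently as the subcategory generated by any exceptional sequence realising $w$. The main obstacle I anticipate is the lifting step in the first paragraph: one must check that the Hurwitz action on exceptional sequences coincides with the Hurwitz action in $\tilde{W}$ under the chosen lift, so that Theorem~\ref{thm:MainElliptic} can be applied verbatim; in particular, the null-root ambiguity that distinguishes $\tilde{W}$ from $W$ must be tracked through the braid moves. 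Once this compatibility is verified, Proposition~\ref{Lem:redCoxElt} guarantees that maximal exceptional sequences yield reduced factorizations, and Theorem~\ref{thm:MainElliptic} supplies the transitivity needed to render $\Psi$ well defined, completing the proof.
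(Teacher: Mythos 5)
Your proposal follows essentially the same route as the paper's proof: the assignment $\Thi(E_1,\ldots,E_r)\mapsto \tilde{s}_{[E_1]}\cdots\tilde{s}_{[E_r]}$, reducedness of complete exceptional sequences via Proposition~\ref{Lem:redCoxElt}, and well-definedness and surjectivity via the Hurwitz transitivity of Theorem~\ref{thm:MainElliptic}, all grafted onto the argument of \cite[Theorem~1.2]{BWY21}. The one compatibility you flag but leave unverified --- that the Hurwitz action on lifted factorizations covers the one on exceptional sequences and that a complete exceptional sequence lands in $\Red_{\tilde{T}}(\tilde{c})$ rather than in $\Red_{\tilde{T}}(\tilde{c}z^{j})$ for some power of the central element --- is exactly what Lemma~\ref{Lem:ReflectionsW-tildeW}~(c) and Corollary~\ref{Cor:HurwitzW-tildeW} supply, the latter requiring an explicit computation in type $D_4^{(1,1)}$, where the set of reduced generating factorizations of $c$ splits into two Hurwitz orbits and only the one containing $(s_1,s_3,s_4,s_0,s_2,s_{2^\star})$ lifts to $\Red_{\tilde{T}}(\tilde{c})$.
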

%The results of this paper seem to assure that it is a good approach to consider the hyperbolic cover $\tilde{W}$ instead of $W$.
\noindent

In an appendix to this paper, we consider the Coxeter systems of finite rank, and construct for them the hyperbolic covers.  We show that no new structures are appearing, more precisely that every hyperbolic cover of a Coxeter system is isomorphic to that Coxeter system (Proposition~\ref{Thm:HyperCoverCoxeter}).
In particular, this provides a simple argument for the fact that every Coxeter system has a faithful representation in a non-degenerate space. We use this knowledge in \cite{BWY24}. There, the bijection given in  \cite{HK16} is generalized to a uniform combinatorial description of all  hereditary connected ext-finite abelian $k$-categories with a tilting object over an algebraic closed field in characteristic zero.
\bigskip
\\
\textbf{Acknowledgement.} The authors would like to thank Ky\={o}ji Sait\={o} for  his advice to study the Hurwitz action in the hyperbolic covers. We also like to thank Jon Mccammond and
Georges Neaime for various discussions on the  hyperbolic
cover of the system of type $D_4^{(1,1)}$.
Last not least, we thank an anonymous referee for various comments on 
a previous version of the paper, and Charly Schwabe for pointing out a serious misprint.
The first author acknowledges that this work was supported by the Deutsche Forschungsgemeinschaft (SFB-TRR 358/1 2023 - 491392403).

\subsection{Notation}\label{Sec:Notation}
Let $V$ be an $\RR$-space, and $f,g \in \mathrm{End}(V)$.  We denote by $fg$ the endomorphism $fg : V \rightarrow V$, where $(fg)(v) := f(g(v))$ for all $v \in V$.

Further, if $G$ is a group, $g_1, \ldots, g_r \in G$ and $i \in \{1, \ldots , r\}$, then $g_1 \cdots \widehat{g_i} \cdots g_r$ denotes the product $g_1 \cdots g_{i-1} g_{i+1} \cdots g_r$, where we omit $g_i$.

We call a graph a \defn{star}, if it is  a tree and if there is at most one vertex of degree larger than $2$ and all the other vertices have degree $1$ or $2$.

\section{Elliptic root systems and elliptic Weyl groups}\label{sec:Notation}

In this section, we introduce the notation that we use throughout this paper. In particular, we recall the definition of a generalized root system and define the extended Weyl groups (see also \cite{Sai85} or \cite{STW16}).

\subsection{Elliptic  root systems and elliptic spaces }\label{subsec:notation}
Let $V$ be a finite dimensional $\RR$-space and $(-\mid -)$ a symmetric bilinear form on $V$. A vector $\alpha$ is \defn{non-isotropic}, if it holds $(\alpha \mid \alpha)\neq 0$, and the \defn{reflection} of $V$ with respect to the non-isotropic $\alpha$ is 
$$s_\alpha(v):= v -  \frac{2(\alpha\mid v)}{(\alpha \mid \alpha)} \alpha,~\mbox{for all}~v \in V.$$

The following definition is a generalization of the notion of a root system in the theory of finite Coxeter systems (see  \cite[Section 1.2]{Sai85}).

\begin{Definition}\label{def:root_system}
A non-empty subset $\Phi\subseteq V$ of non-isotropic vectors is called \defn{generalized root system} if the following properties are satisfied
\begin{enumerate}
	\item[(a)] $\spanr(\Phi)= V$,
	\item[(b)] $s_{\alpha}(\Phi) \subseteq \Phi$ for all $\alpha\in \Phi$ and
	\item[(c)] $\Phi$ is \defn{crystallographic}, i.e. $\frac{2(\alpha \mid \beta)}{(\beta \mid \beta)} \in \mathbb{Z}$ for all $\alpha,\beta \in \Phi$.
\end{enumerate}
	
We say that $\Phi$ is of \defn{rank} $m$, if $\dim V = m$. The elements in a generalized root system are \defn{roots}. The generalized root system  $\Phi$  is  \defn{reduced} if $r\alpha\in \Phi$ ($r\in \RR$) implies $r=\pm 1$ for all $\alpha\in \Phi$, and the system is \defn{irreducible} if there do not exist generalized root systems $\Phi_{1},\Phi_{2}$ such that $\Phi=\Phi_{1} \cup \Phi_{2}$ and $\Phi_{1} \bot \Phi_{2}$. A non-empty subset $\Psi$ of a generalized root system $\Phi$ is a \defn{root subsystem} if $\Psi$ is a generalized root system in $\spanr(\Psi)$. For a subset $\Psi=\{\alpha_1, \ldots, \alpha_n \} \subseteq \Phi$ we define $\langle \Psi \rangle_{\text{RS}}$ to be the smallest root subsystem of $\Phi$ that contains  $\Psi$, and we call it the \defn{root subsystem generated by $\Psi$}.
	
For a subset $\Psi \subseteq \Phi$ we put $L(\Psi) = \spanz(\Psi)$. The crystallographic condition on  $\Phi$ implies  that $L(\Psi)$ is a lattice.  The lattice $L(\Phi)$ is the \defn{root lattice of $\Phi$}. The root system $\Phi$  is  \defn{simply-laced} if $(\alpha \mid \alpha) =2$ for all $\alpha \in \Phi$. Two generalized root systems $\Phi_{1}$ and $\Phi_{2}$ are  \defn{isomorphic} if there exists a linear isometry between the corresponding ambient spaces that sends $\Phi_{1}$ to $\Phi_{2}$.
\end{Definition}

\begin{Definition}\label{def:elliptic}
We call a generalized root system $\Phi$ \defn{elliptic} root system if 
\begin{itemize}
\item[(a)] $\Phi$ is irreducible and reduced
\item[(b)] The signature of $(-\mid -)$ is $(n,0,2)$, where the first, second and third entries are the number of positive, negative, and zero eigenvalues of the form and where $n +2 = \dim V$.
\end{itemize}
If $\Phi$ is an elliptic root system, then $(V,(-\mid-))$ is called 
\defn{elliptic} space.
\end{Definition}

In this note, we denote the radical of the bilinear form $(-\mid -)$ by  $R$. Moreover, we abbreviate $(\Phi+X)/X = \{ \alpha + X\mid \alpha \in \Phi\}$  by $\Phi/X$ for $X$ a subspace of $R$.
In addition, we denote by $p_X$  the canonical epimorphism $V \rightarrow V/X$ and by $(- \mid -)_X$  the induced bilinear form on $V/X$, that is, $(p_X(v) \mid p_X(w))_X = (v \mid w)$ for all $v,w \in V$. 

\begin{Lemma}\label{Lem:QuotientsEllipticRootS}
Let $\Phi$ be an elliptic root system and $U$ a  subspace of the radical $R$. Then the following hold.
\begin{itemize}
\item[(a)] $\Phi/U$ is an irreducible generalized root system.
\item[(b)] $\Phi/R \subseteq V/R$ is a finite irreducible  root system and $(-\mid -)$ induces a positive definite form on $V/R$.
\item[(c)] If $U$ is $1$-dimensional, then $\Phi/U \subseteq V/U$ is an affine irreducible root system, and \\$(-\mid-)$ induces a semi-positive definite form on $V/U$, whose radical is $1$-dimensional.
\end{itemize}
\end{Lemma}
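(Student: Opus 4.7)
The plan is to treat all three parts uniformly by first producing a well-defined induced form on $V/U$ and then checking how the axioms of a generalized root system descend. Since $U\subseteq R$, the prescription $(v+U\mid w+U):=(v\mid w)$ gives a well-defined symmetric bilinear form on $V/U$. For any $\alpha\in\Phi$ the image $\bar\alpha:=\alpha+U\in V/U$ satisfies $(\bar\alpha\mid\bar\alpha)=(\alpha\mid\alpha)\neq 0$, so every root projects to a non-isotropic vector, and a direct expansion of the reflection formula yields $s_{\bar\alpha}(\bar v)=\overline{s_\alpha(v)}$ for all $v\in V$.

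For (a) I will verify the three axioms of Definition~\ref{def:root_system} for $\Phi/U\subseteq V/U$. Spanning is immediate from $\spanr(\Phi)=V$; reflection-closure is the identity $s_{\bar\alpha}(\bar\beta)=\overline{s_\alpha(\beta)}\in\Phi/U$; the crystallographic condition is inherited since the Cartan integers $2(\alpha\mid\beta)/(\beta\mid\beta)$ are preserved by the projection. For irreducibility, suppose $\Phi/U=\Psi_1\cup\Psi_2$ with $\Psi_1\perp\Psi_2$ and both non-empty. Put $\Phi_i:=\{\alpha\in\Phi:\bar\alpha\in\Psi_i\}$. Orthogonality lifts from the quotient, because $(\alpha\mid\beta)=(\bar\alpha\mid\bar\beta)=0$ for $\alpha\in\Phi_1$, $\beta\in\Phi_2$; the two subsets exhaust $\Phi$ and each is closed under its own reflections, contradicting irreducibility of $\Phi$.

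Parts (b) and (c) then reduce to a signature count. Quotienting $V$ by a $k$-dimensional subspace $U$ of the $2$-dimensional radical $R$ produces a form on $V/U$ of signature $(n,0,2-k)$: the $n$ positive eigenvalues persist, and $R/U$ carries the residual null directions. For (b), taking $U=R$ and $k=2$ gives a positive definite (Euclidean) form on $V/R$, so $\Phi/R$ together with part (a) is an irreducible crystallographic generalized root system in a Euclidean space; the discreteness of Cartan integers in a Euclidean space forces finitely many roots. For (c), $k=1$ yields a positive semi-definite form with $1$-dimensional radical $R/U$, which, combined with (a), is exactly the data of an irreducible affine root system in the sense of Sait\={o}~\cite{Sai85}.

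The only point where some care is needed is the irreducibility step in (a): one has to make sure that the pullback sets $\Phi_1,\Phi_2$ are mutually orthogonal in $V$ itself, not merely in $V/U$. This is automatic because $U$ lies in the radical, so the pairing on $V$ literally agrees with its descendant on coset representatives. This is exactly what makes the lemma a clean formal consequence of Definition~\ref{def:elliptic} rather than a substantive theorem, and I do not anticipate any genuine obstacle beyond being careful with this bookkeeping.
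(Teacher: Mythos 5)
Your argument for part (a) is essentially the paper's: pull a hypothetical orthogonal decomposition of $\Phi/U$ back to $\Phi$, observe that orthogonality lifts because $U\subseteq R$ makes the induced form agree with the original on coset representatives, and contradict irreducibility of $\Phi$. For parts (b) and (c) you take a genuinely different, more self-contained route: the paper disposes of both by noting that the Gram matrix is indecomposable and invoking Kac's trichotomy \cite[Lemma~4.5]{Kac83}, whereas you compute directly that the induced form on $V/U$ has signature $(n,0,2-k)$ for $k=\dim U$, which immediately gives positive definiteness on $V/R$ and semi-positivity with one-dimensional radical on $V/U$. Your computation is cleaner and more explicit for the statements about the form; what the citation of Kac buys the paper is precisely the one step you gloss over, namely that an irreducible crystallographic generalized root system in a Euclidean space is actually \emph{finite}. ``Discreteness of Cartan integers'' alone does not finish this: you should add that irreducibility forces the root lengths to take only finitely many values (via the constraint $4\cos^2\theta\in\{0,1,2,3,4\}$ along a connected chain of roots), so that $\Phi/R$ sits inside the intersection of the lattice $L(\Phi/R)$ with finitely many spheres, and likewise say a word about why $\Phi/U$ meets whatever definition of ``affine root system'' is in force (in Sait\={o}'s framework this is indeed just the signature condition together with (a), so your (c) is fine as stated). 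With that one sentence supplied, your proof is complete and arguably more transparent than the paper's.
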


\begin{proof}
Suppose that $\Phi/U  = (\Psi_1+U)/U \cup (\Psi_2 +U)/U$ and $(\Psi_1 + U)/U \bot (\Psi_2 +U)/U$. Then $(\Psi_1+U) \cup (\Psi_2 +U) = \Phi+U$, and therefore $\Phi_1 \cup \Phi_2 = \Phi$, where $\Phi_i:= (\Psi_i + U) \cap \Phi$ for $i = 1,2$. As $U \leq R$, the assumption $(\Psi_1 +U)/U \bot (\Psi_2+U)/U$ yields  $\Phi_1 \bot \Phi_2$.  We conclude that $\Phi_i \subset \spanr (\Phi_i)$ are generalized root systems for $i = 1,2$ contradicting the fact that $\Phi$ is irreducible. This proves (a).
	
Let $A$ be the Gram matrix of $(-\mid -)$. As $\Phi$ is irreducible by our assumption, the matrix $A$ is indecomposable. Therefore, (b) and (c) follow from (a) and \cite[Lemma~4.5]{Kac83}.
\end{proof}

 From now on, we assume the following setting in this paper.

 \begin{hyp}\label{Hyp}
~~~
\begin{itemize}
\item[(H1)] $\Phi$ is an elliptic root system,
\item[(H2)] $(\alpha \mid \alpha ) = 2$ for all $\alpha \in \Phi$;
\item[(H3)] It is $U \leq R$  a fixed $1$-dimensional subspace of the radical $R$ such that $U \cap L(\Phi) \neq \{0\}$.
\end{itemize}
\end{hyp}

\begin{remark}
We claim that such a subspace as required in (H3) exists. 
By Lemma~2.3 (c) the root system is not finite (see \cite[Proposition~4.9]{Kac83}). This yields $R \cap  L(\Phi)  \neq \{0\}$, since otherwise the finite system $\Phi/R$ would be isomorphic to $\Phi$.
\end{remark}

Next, we describe $\Phi$ (see Proposition~\ref{RootSystem}). Let $\{\beta_0, \ldots , \beta_n\}$ be a root basis of  the affine root system $\Phi/U$ such that $\{\beta_1, \ldots , \beta_n\}$ is a root basis of a finite root system. By \cite[Theorem 5.6]{Kac83} there exist unique $m_1, \ldots, m_n \in \NN$ such that 
$$\beta_0 + \sum_{i = 1}^n m_i \beta_i~\mbox{}$$
spans the lattice $L(\Phi/U) \cap R/U$. Let $\alpha_i$ be a preimage of $\beta_i$ in $\Phi$ for $0 \leq i \leq n$ and set
$$b = \alpha_0 + \sum_{i = 1}^n m_i \alpha_i.$$
Then $b$ is in $L(\Phi) \cap R$ and 
$$\spanz(b +U) = L(\Phi/U) \cap R/U.$$
Let $a$ in $U$ such that $L(\Phi) \cap U = \ZZ a$. Further set 
$$V_b:= \VAFF := \sum_{i = 0}^n \RR \alpha_i~\mbox{and}~\VFIN := \sum_{i = 1}^n \RR \alpha_i.$$

\begin{Lemma}\label{Lem:Types}
The following holds.
\begin{itemize}
\item[(a)] $\FRS := \VFIN \cap \Phi$ is a finite root system with root basis $B_{\fin}:= \{\alpha_1 , \ldots , \alpha_n\}$.
\item[(b)]  $\Phi_b:= \ARS  := \VAFF \cap \Phi$ is an affine root system with root basis $B_b:= B_{\aff}:= \{\alpha_0, \alpha_1 , \ldots , \alpha_n\}$.
\end{itemize}
\end{Lemma}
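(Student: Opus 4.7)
The plan is to transfer the root system structures on $\Phi/U$ (affine, by Lemma~\ref{Lem:QuotientsEllipticRootS}(c)) and on its standard finite subsystem back to $V$ via the quotient map $p_U\colon V \to V/U$. The unifying observation is that $p_U$ restricts to a linear isomorphism $\VAFF \xrightarrow{\sim} V/U$, and the finite and affine root system data on $V/U$ pull back canonically along this isomorphism.

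First I would establish that $p_U|_{\VAFF}$ is a linear isomorphism. Since $\{\beta_0,\dots,\beta_n\}$ is a root basis of the affine root system $\Phi/U$ in the $(n+1)$-dimensional space $V/U$, these vectors are linearly independent, and hence so are the lifts $\alpha_0,\dots,\alpha_n$ in $V$. Thus $\dim\VAFF = n+1 = \dim V/U$, and since $p_U(\alpha_i)=\beta_i$ spans $V/U$, the restriction is an isomorphism. The same argument shows that $p_U|_{\VFIN}$ is an injection onto $\spanr\{\beta_1,\dots,\beta_n\}$, which is the ambient space of the finite root subsystem $\Phi_f := \spanr\{\beta_1,\dots,\beta_n\} \cap \Phi/U$ of $\Phi/U$.

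For (a), I would verify the axioms of Definition~\ref{def:root_system} directly for $\FRS = \VFIN \cap \Phi$: it spans $\VFIN$ as it contains $\alpha_1,\dots,\alpha_n$, it is crystallographic as a subset of $\Phi$, and it is closed under reflections since $\alpha \in \VFIN$ forces $s_\alpha(\VFIN) \subseteq \VFIN$. Injectivity of $p_U|_{\VFIN}$ maps $\FRS$ into the \emph{finite} set $\Phi_f$, yielding finiteness. For the basis claim, any $\alpha \in \FRS$ projects to a root $p_U(\alpha) = \sum_{i=1}^n n_i \beta_i$ with the $n_i$ of common sign (because $\{\beta_1,\dots,\beta_n\}$ is a root basis of $\Phi_f$), and applying $(p_U|_{\VFIN})^{-1}$ lifts the identity to $\alpha = \sum_{i=1}^n n_i \alpha_i$ in $\VFIN$.

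Part (b) runs analogously but requires the additional step that $p_U|_{\VAFF}$ maps $\ARS$ \emph{onto} $\Phi/U$. For any $\gamma \in \Phi/U$, write $\gamma = s_{\beta_{i_1}}\cdots s_{\beta_{i_k}}(\beta_j)$ using closure of $\Phi/U$ under reflections from its basis, and set $\alpha := s_{\alpha_{i_1}}\cdots s_{\alpha_{i_k}}(\alpha_j)$. Then $\alpha \in \Phi$ by closure of $\Phi$ under reflections, and $\alpha \in \VAFF$ because each $s_{\alpha_{i_l}}$ preserves $\VAFF$; the equivariance $p_U\circ s_{\alpha_i} = s_{\beta_i}\circ p_U$, which follows from the compatibility $(p_U(v)\mid p_U(w))_U = (v\mid w)$, gives $p_U(\alpha) = \gamma$. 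Combined with injectivity of $p_U|_{\VAFF}$, this produces a bijection $\ARS \leftrightarrow \Phi/U$ under which $\{\alpha_0,\dots,\alpha_n\}$ corresponds to the root basis $\{\beta_0,\dots,\beta_n\}$, so $\ARS$ inherits the affine root system structure with the claimed basis. The main technical step is the surjectivity in (b); the remainder is a careful transport of structure through the isomorphism $p_U|_{\VAFF}$.
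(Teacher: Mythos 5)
Your argument is correct, but it follows a different route from the paper's. The paper's proof is a two-line reduction: by construction (and Lemma~\ref{Lem:QuotientsEllipticRootS}) the form $(-\mid-)$ restricted to $\VFIN$ (resp.\ $\VAFF$) is positive definite (resp.\ positive semidefinite with one-dimensional radical), and then \cite[Lemma~4.5]{Kac83} is invoked to conclude both the finite/affine type and, implicitly, the basis statement. You instead make the transport of structure explicit: you show $p_U|_{\VAFF}$ is an isometric isomorphism onto $V/U$ (so $\VAFF\cap U=0$), verify the generalized-root-system axioms for $\VFIN\cap\Phi$ and $\VAFF\cap\Phi$ directly, get finiteness in (a) from injectivity of $p_U|_{\VFIN}$ into the finite subsystem of $\Phi/U$, and prove surjectivity in (b) via the $p_U$-equivariance of simple reflections. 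This buys a more self-contained and verifiable proof of the \emph{root basis} claims, which the paper's citation of Kac leaves somewhat implicit; the price is that you still lean on two standard structural facts about the affine system $\Phi/U$ that deserve a citation rather than the phrase ``closure under reflections'': that every root of a reduced affine root system lies in the Weyl-group orbit of the simple roots (needed for surjectivity in (b)), and that the roots of $\Phi/U$ lying in $\spanr\{\beta_1,\dots,\beta_n\}$ form exactly the finite subsystem with basis $\{\beta_1,\dots,\beta_n\}$ (needed for the sign condition in (a)). With those two references supplied, your proof is complete and, if anything, more detailed than the one in the paper.
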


\begin{proof} 
By construction $(- \mid -)$ restricted to $\VFIN$ (resp. to $ \VAFF$) is positive definite (resp. semi-positive with $1$-dimensional radical), see  Lemma~\ref{Lem:QuotientsEllipticRootS}, which yields the assertion by \cite[Lemma~4.5]{Kac83} .
\end{proof}

\begin{Lemma}\label{RadicalLattice}
It is $L(\Phi) \cap R = \ZZ a \oplus \ZZ b$. 
In partiuclar, $L(\Phi) \cap R$  is a full lattice in $R$.
\end{Lemma}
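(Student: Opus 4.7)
The plan is to exploit the quotient map $p_U\colon V \to V/U$ to reduce the problem to the known affine picture, where Lemma~\ref{Lem:Types} and the construction preceding it already tell us that $L(\Phi/U) \cap R/U = \mathbb{Z}(b + U)$ is infinite cyclic, generated by the image of $b$. First I would record that $R$ is $2$-dimensional (this is the signature hypothesis of Definition~\ref{def:elliptic}), so showing that $\mathbb{Z}a \oplus \mathbb{Z}b$ sits inside $L(\Phi) \cap R$ with rank $2$ is enough to conclude both that the sum is direct and that it is a full lattice in $R$.

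Next I would check $\mathbb{R}$-linear independence of $a$ and $b$. The element $a$ spans the $1$-dimensional space $U$, and $b \notin U$ because its image $b + U$ generates the non-trivial lattice $L(\Phi/U) \cap R/U$ (the root system $\Phi/U$ is affine with $1$-dimensional radical, so this lattice is non-zero by \cite[Theorem~5.6]{Kac83}). Therefore $\{a,b\}$ is an $\mathbb{R}$-basis of $R$, and since both vectors lie in $L(\Phi) \cap R$, the inclusion $\mathbb{Z}a \oplus \mathbb{Z}b \subseteq L(\Phi) \cap R$ is immediate and the sum is direct.

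For the reverse inclusion I would take an arbitrary $x \in L(\Phi) \cap R$ and consider $p_U(x) \in V/U$. The identification $p_U(L(\Phi)) = L(\Phi/U)$ is automatic because $p_U$ is $\mathbb{Z}$-linear and sends $\Phi$ onto $\Phi/U$; combined with $U \subseteq R$ (so that $p_U(R) = R/U$), this gives $p_U(x) \in L(\Phi/U) \cap R/U = \mathbb{Z}(b+U)$. Hence there exists $k \in \mathbb{Z}$ with $x - kb \in U$. Since $x$ and $kb$ both lie in $L(\Phi)$, we conclude that $x - kb \in L(\Phi) \cap U = \mathbb{Z}a$ by the choice of $a$ in (H2), and therefore $x \in \mathbb{Z}a \oplus \mathbb{Z}b$.

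The only subtle point I would want to double-check is the identity $p_U(L(\Phi) \cap R) = L(\Phi/U) \cap R/U$: the inclusion ``$\subseteq$'' is formal, but for ``$\supseteq$'' I need that every preimage in $L(\Phi)$ of an element of $R/U$ already lies in $R$, which uses precisely that $U \subseteq R$ so that $R + U = R$. Once this is in place, the argument reduces to bookkeeping, and the essential input is the cyclic generation of $L(\Phi/U) \cap R/U$ coming from Kac's theorem on affine root systems.
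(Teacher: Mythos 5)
Your proof is correct and follows essentially the same route as the paper's: project along $p_U$ to use $\spanz(b+U) = L(\Phi/U)\cap R/U$, deduce $r - zb \in L(\Phi)\cap U = \ZZ a$, and note $a,b \in L(\Phi)\cap R$ for the easy inclusion. Your extra care about $\RR$-linear independence of $a,b$ and the fullness in the $2$-dimensional radical $R$ only makes explicit what the paper leaves implicit.
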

\begin{proof}
By the choice of $U$, it is $a \neq 0$. Further the choice of $a$ and $b$ yields $\ZZ a \cap \ZZ b = \{0\}$ and $a, b \in  L(\Phi) \cap R$. Therefore, $\ZZ a \oplus \ZZ b$ is a  subset of $L(\Phi) \cap R$.
	
Now, let $r \in L(\Phi) \cap R$. Then $p_U(r) \in L(\Phi)/U \cap R/U$. The definition of $b$ implies that $p_U(r) = z b +U$  for some $z \in \ZZ$. As $r, b \in L(\Phi)$ we obtain $r - z b \in L(\Phi) \cap U = \ZZ a$, which yields the assertion.
\end{proof}

\begin{Proposition}\label{RootSystem}
Let $\Phi$ be an elliptic root system with respect to $(- \mid -)$ of rank at least four. Then $ \displaystyle \Phi = \FRS \oplus \ZZ a \oplus \ZZ b$.
\end{Proposition}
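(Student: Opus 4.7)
The statement asserts that each $\alpha \in \Phi$ admits a unique decomposition $\alpha = \gamma + za + wb$ with $\gamma \in \FRS$ and $z, w \in \ZZ$, and conversely that every such expression lies in $\Phi$. I plan to establish the two inclusions separately.

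For $\Phi \subseteq \FRS + \ZZ a + \ZZ b$, the first step is to show that $p_R$ restricts to a bijection $p_R|_{\FRS} \colon \FRS \xrightarrow{\sim} \Phi/R$. By the construction of $\VFIN$ and Lemma~\ref{Lem:QuotientsEllipticRootS}, the form $(-\mid-)$ is positive definite on $\VFIN$, so $\VFIN \cap R = 0$ and $p_R$ restricts to a linear isomorphism $\VFIN \to V/R$. The roots $\alpha_1, \dots, \alpha_n$ project to a root basis of the finite system $\Phi/R$ (obtained from the finite part $\beta_1, \dots, \beta_n$ of the affine basis of $\Phi/U$ by the further quotient $V/U \to V/R$), so $p_R(\FRS)$ is an irreducible simply-laced root subsystem of $\Phi/R$ of the same rank and type, hence equals $\Phi/R$. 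For $\alpha \in \Phi$, let $\gamma \in \FRS$ be the unique root with $p_R(\gamma) = p_R(\alpha)$; then $\alpha - \gamma \in R \cap L(\Phi) = \ZZ a \oplus \ZZ b$ by Lemma~\ref{RadicalLattice}, so $\alpha = \gamma + za + wb$. Uniqueness follows from $\VFIN \cap R = 0$ and the $\RR$-linear independence of $a, b$ in $R$.

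For the reverse inclusion, I plan to exhibit two ``axis-parallel'' affine subsystems of $\Phi$. First, by Lemma~\ref{Lem:Types}(b) and the standard structure theorem for simply-laced affine root systems, $\ARS = \FRS + \ZZ b \subseteq \Phi$. Next, the subspace $U' := \RR b$ also satisfies hypothesis (H2), because $b \in L(\Phi) \cap \RR b$. Re-running the construction with $U'$ in place of $U$, and choosing the finite basis lifts so that $\alpha_i' = \alpha_i$ for $i \geq 1$, produces a second affine subsystem $\FRS + \ZZ a' \subseteq \Phi$, where $a' = \alpha_0' + \sum_{i=1}^n m_i' \alpha_i$ lies in $R \cap L(\Phi)$ and its image $p_{\RR b}(a')$ generates $L(\Phi/\RR b) \cap R/\RR b = \ZZ \bar a$. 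Hence $a' = \varepsilon a + kb$ for some $\varepsilon \in \{\pm 1\}$ and $k \in \ZZ$.

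The main obstacle is to combine these two one-parameter subsystems into the full two-dimensional lattice of shifts. My plan here is a reflection argument. For $\gamma_1, \gamma_2 \in \FRS$ with $(\gamma_1 \mid \gamma_2) = 1$ and $z, w \in \ZZ$, the roots $\alpha := \gamma_1 + wb$ and $\beta := \gamma_2 + z a' = \gamma_2 + \varepsilon z a + zk b$ both lie in $\Phi$. Since $a, b \in R$ are isotropic, $(\alpha \mid \beta) = (\gamma_1 \mid \gamma_2) = 1$, and a direct computation gives
\[
s_\alpha(\beta) = (\gamma_2 - \gamma_1) + \varepsilon z a + (zk - w)\, b \;\in\; \Phi.
\]
As $w$ varies over $\ZZ$, the coefficient $zk - w$ attains every integer value. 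Because $\FRS$ is irreducible simply-laced of rank $n \geq 2$ (the hypothesis $\dim V \geq 4$ forces $n \geq 2$), every $\delta \in \FRS$ admits a presentation $\delta = \gamma_2 - \gamma_1$ with $(\gamma_1 \mid \gamma_2) = 1$: choose $\gamma_1 \in \FRS$ with $(\gamma_1 \mid \delta) = -1$ (such a neighbour always exists in an irreducible Dynkin diagram of rank $\geq 2$) and set $\gamma_2 := \gamma_1 + \delta \in \FRS$. This yields $\FRS + \varepsilon z a + \ZZ b \subseteq \Phi$ for every $z \in \ZZ$, and therefore $\FRS + \ZZ a + \ZZ b \subseteq \Phi$, completing the proof.
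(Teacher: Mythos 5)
Your proof is correct in substance, and the forward inclusion $\Phi \subseteq \FRS \oplus \ZZ a \oplus \ZZ b$ is essentially the paper's argument in a slightly different dress: you reduce modulo the full radical $R$ and invoke Lemma~\ref{RadicalLattice} directly, whereas the paper reduces modulo $U$ first, reads off the $b$-coefficient from the affine system $\Phi/U$, and then uses $L(\Phi)\cap U=\ZZ a$; both are fine. The reverse inclusion is where you genuinely diverge. The paper introduces $K_R(\alpha)=\{x\in R\mid \alpha+x\in\Phi\}$, shows it is independent of $\alpha\in\FRS$ (conjugacy, since $W$ fixes $R$ pointwise) and is a subgroup of $R$, and then deduces $L(\Phi)\cap R\subseteq K_R(\alpha_1)$ \emph{abstractly} from the already-proved forward inclusion: every root is a finite root plus an element of $K_R(\alpha_1)$, so the radical part of the root lattice is swallowed by $K_R(\alpha_1)$, and Lemma~\ref{RadicalLattice} finishes. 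The payoff is that the paper never has to exhibit a root of the form $\gamma+a$ explicitly. You instead build a second affine subsystem $\FRS+\ZZ a'$ with $a'=\varepsilon a+kb$ by re-running the construction with $U'=\RR b$, and then generate the full $\ZZ a\oplus\ZZ b$ worth of shifts by reflecting $\gamma_2+za'$ in $\gamma_1+wb$; this is more constructive and makes the $a\leftrightarrow b$ symmetry of the simply-laced elliptic systems visible, at the cost of an extra existence claim.

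That extra claim is the one spot you should shore up: ``re-running the construction with $U'$ \ldots{} choosing the finite basis lifts so that $\alpha_i'=\alpha_i$'' presupposes that $p_{U'}(\alpha_1),\ldots,p_{U'}(\alpha_n)$ extend to a root basis $\{\beta_0',\ldots,\beta_n'\}$ of the affine system $\Phi/U'$ with finite part exactly these roots. This is true, but not automatic from Lemma~\ref{Lem:QuotientsEllipticRootS} alone: you need that these $n$ roots carry the finite Cartan matrix of $\FRS$ and that their images in $\Phi/R$ form a root basis of the \emph{whole} finite quotient (which your forward-inclusion analysis already gives), so that adjoining $\delta'-p_{U'}(\widetilde{\alpha})$, with $\delta'$ a generator of $L(\Phi/U')\cap R/U'$, yields an affine root basis. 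Without that remark a skeptic could worry that the finite part of the $U'$-construction is a different subsystem of $\Phi$, in which case your reflection step would need the roots $\gamma_2+za'$ for $\gamma_2\in\FRS$ rather than for some other lift. With that one sentence added, your argument is complete; the identification $a'=\varepsilon a+kb$ via $L(\Phi/\RR b)\cap R/\RR b=\ZZ\bar a$ and the final reflection computation are both correct.
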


\begin{proof} 
First we show $\Phi \subseteq  \Phi_{\fin} \oplus \ZZ a \oplus \ZZ b$. Let $\alpha \in \Phi$. Then we get $p_U(\alpha) \in \ARS$ and
$$p_U(\alpha) = \sum_{i = 1}^n z_i \beta_i + z p_U(b)~\mbox{for some}~z_i,z \in \ZZ.$$
Therefore,  $\alpha = \sum_{i = 1}^n z_i \alpha_i + z b + ra$ for some $r \in \RR$ and  $ra = \alpha - \sum_{i = 1}^n z_i \alpha_i - z b \in L(\Phi) \cap U = \ZZ a$. Thus $r \in \ZZ$, which yields, as 
$$\sum_{i=1}^n z_i \alpha_i \in \Phi_{\fin},$$
the claimed inclusion.
	
For the other inclusion, consider for $\alpha \in  \Phi_{\fin}$ the set $K_{R}(\alpha):= \{x \in R \mid  \alpha + x \in \Phi\},$ which is studied in \cite[1.16]{Sai85}. As $\Phi$ is simply-laced  and $\FRS$ irreducible by Lemma~\ref{Lem:QuotientsEllipticRootS} (b), all roots in $\Phi_{\fin}$ are conjugate, which implies that  
\medskip\\
(1.) $K_{R}(\alpha) = K_{R}(\beta)$  for every pair of roots $\alpha, \beta$ in $ \Phi_{\fin}$.
\medskip\\
We show that $K_{R}(\alpha) $ is a sublattice of $L(\Phi)$. Let $x, y \in K_{R}(\alpha) $. As $\Phi_{\fin}$ is irreducible and of rank at least $2$, we can choose $\alpha_i, \alpha_j \in \FRS$  such that  $(\alpha_i \mid \alpha_j) = -1$. Then $\alpha_i + x, \alpha_j +y \in \Phi$, which yields $s_{\alpha_j +y}(\alpha_i + x) = \alpha_i +x + \alpha_j +y \in \Phi$, as well.  We conclude that  $\alpha_i + \alpha_j \in \Phi_{\fin}$, and that $x+y \in K_{R}(\alpha_i + \alpha_j) =  K_{R}(\alpha)$. In addition, it is $s_\alpha(\alpha +x) = - \alpha  +x$, which implies, as $-\alpha \in \Phi_{\fin}$, that  $-x$ is in $K_{R}(\alpha) $. This shows
	\medskip\\
(2.)  $K_{R}(\alpha) $ is a sublattice of $L(\Phi) \cap R$.
	\medskip\\
	Let $\widehat{\alpha} \in \Phi$. By the other inclusion, there are $\alpha \in \Phi_{\fin}$ and $x \in R$ such that $\widehat{\alpha}  = \alpha +x$. Then $x$ is in $K_R(\alpha) = K_R(\alpha_1) $, and it follows from (1.) and (2.) that
 \medskip\\
	(3.) $L(\Phi) = L(\Phi_{\fin}) + K_R(\alpha_1)$ and $L(\Phi) \cap R = K_R(\alpha_1)$.
		\medskip\\
Lemma~\ref{RadicalLattice} concludes the assertion.
\end{proof}

\begin{remark}
If the finite root system $\FRS$ of Proposition~\ref{RootSystem} is of type $A_{1}$, that is, the corresponding simply-laced elliptic root system is of rank three, then there exist exactly two simply-laced elliptic root systems as described in \cite{Sai85} or in  \cite[Proposition 4.2 and Table 4.5]{BA97}. Therefore, the conclusion of Proposition~\ref{RootSystem} is not valid if the rank of $\Phi$ is less than four.
\end{remark}

\subsection{An  elliptic root basis}\label{Sec:EllipticRootBasis}	
We continue our notation that has been  introduced before. Further, let 
	$$\widetilde{\alpha} := -\alpha_0 + b = \sum_{i = 1}^n m_i \alpha_i$$
be the highest root in $\FRS$ with respect to the fundamental system $B_{\text{fin}} = \{\alpha_1, \ldots , \alpha_n\}$. By construction $B_b  = \{\alpha_0, \ldots , \alpha_n\}$ is a  fundamental system of the affine root system  $\Phi_b$. The Dynkin diagram of $\Phi_b$ is one of the diagrams $X_n^{(1)}$ given in Table Aff 1 of \cite{Kac83} where $X_n$ is one of the simply-laced types $A_n$ ($n \geq 2$), $D_n$ ($n \geq 4$) or $E_n$ ($n \in \{ 6,7,8 \}$).
	
Next, we define a root basis for an elliptic root system (see also \cite[Section 8]{Sai85}).
 
\begin{Definition}\label{DefinitionRootBasis}
Let $\Phi$ be a simply-laced   elliptic root system. For $\alpha \in  B_{\text{aff}}$ set $\alpha^\star = \alpha + a$, and define 
	$$m_{\text{max}}  := \max \{ m_{i} \mid 0 \leq i \leq n \},~\mbox{where}~b = \sum_{i=0}^n m_i \alpha_i, ~\mbox{and}~ B_{\max}:=\{ \alpha_{i^\star} \mid m_{i}= m_{\max} \}.$$
Then the set $B:= B(\Phi):= B_{\text{aff}} \cup B_{\max}$ is called \defn{ elliptic} root basis for $\Phi$.
\end{Definition}
	
Note that an elliptic root basis $B$ is a basis of $V$ if and only if  $|B_{\max}| = 1$ (i.e. if the elliptic root system is of codimension $1$, see \cite[Section~(8.1)]{Sai85}). If the left hand side holds, then we  call $\Phi$ a  \defn{ tubular elliptic} root system and  $B$ a \defn{tubular elliptic} root basis. From now on, we assume that $\Phi$ and $B$ are  tubular. Let $t \in \{1, \ldots , n\}$ be such that $m_t = m_{\text{max}}$. If $B$ is a tubular elliptic root basis of $V$, then we  will call $(V,B, (- \mid -))$ a \defn{tubular} elliptic space.
From \cite[Table~Aff 1 ]{Kac83}  we derive the following types.

\begin{Lemma}\label{Lem:TubEllRoot}
Let $\Phi$ be a tubular elliptic root system. Then $\FRS$ is of type $D_4$ or $E_6, E_7$ or $E_8$.
\end{Lemma}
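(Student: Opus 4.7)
The plan is a direct case inspection using the classification of simply-laced affine Dynkin diagrams. By Lemma~\ref{Lem:Types}, the diagram of $\Phi_b$ is one of $X_n^{(1)}$ with $X_n$ of simply-laced type, and in each case the marks $m_0 = 1, m_1, \ldots, m_n$ expressing the isotropic generator $b$ in the simple system $B_{\text{aff}}$ are tabulated in Table Aff~1 of \cite{Kac83}. By construction of $B_{\max}$, the tubular hypothesis $|B_{\max}| = 1$ amounts to the assertion that the maximum $m_{\max} := \max_i m_i$ is attained at a unique index, so it is enough to read off the marks from each diagram and check this uniqueness.

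In type $A_n^{(1)}$ with $n \geq 2$ every mark equals $1$, so $m_{\max}$ is attained at all $n+1 \geq 3$ nodes and tubularity fails; this recovers the exclusion noted just before the lemma. In type $D_n^{(1)}$ with $n \geq 5$ the four endpoint nodes have mark $1$ while the $n-3 \geq 2$ interior nodes all have mark $2$, so $m_{\max} = 2$ is attained more than once and tubularity again fails. For $D_4^{(1)}$, however, only the central trivalent node has mark $2$ while the four leaves have mark $1$, so $|B_{\max}| = 1$ and the system is tubular. In the three exceptional types $E_6^{(1)}$, $E_7^{(1)}$, $E_8^{(1)}$ the maximal mark is $3$, $4$, $6$ respectively, in each case attained at the unique trivalent vertex of the finite diagram (node $4$ in the numbering of Figure~\ref{fig:DynkinBourbaki}), while every other mark is strictly smaller; hence tubularity holds in all three cases.

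Combining these case checks shows that the finite part $\FRS$ of a tubular elliptic root system is of type $D_4$, $E_6$, $E_7$ or $E_8$, as claimed. No real obstacle arises: the proof is a finite table look-up, and the only technical care needed is to align the Bourbaki node numbering used in Figure~\ref{fig:DynkinBourbaki} with the node numbering of Kac's Table Aff~1 so that the marks can be transferred unambiguously.
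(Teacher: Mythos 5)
Your proof is correct and is essentially the argument the paper intends: the lemma is stated immediately after the sentence ``Using \cite[Table~Aff 1]{Kac83} we obtain the following types,'' i.e.\ the paper's proof is exactly the table look-up of the marks $m_0=1,m_1,\ldots,m_n$ and the check of where the maximum is attained, which you carry out explicitly and correctly (your mark values agree with Table~\ref{tab:mt}). Nothing further is needed.
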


We conclude the section with another definition.

\begin{Definition}\label{Def:TypeElliptic}
Let $\Phi$ be a tubular elliptic root system. If $\Phi_{\fin}$ is of type $X_n$, then we say that $\Phi$ is of type $X_n^{(1,1)}$.
\end{Definition}

\subsection{A diagram for an elliptic  root system}
Let $(\Phi,B)$ be an elliptic root system with elliptic root basis $B$. Next, we introduce the \defn{elliptic  Dynkin diagram} for $\Phi$. It is defined as the undirected graph on the set of vertices $M:= \{0, \ldots , n, t^{\star}\}$. Let $\alpha_i, \alpha_j \in B$, where $i, j \in M$  with $i \neq j$. If it is $(\alpha_i \mid \alpha_j) = 0$, then there is no edge between $i$ and $j$. There is a single edge between $i$ and $j$ if $(\alpha_i \mid \alpha_j) = -1$ and two dotted edges if $(\alpha_i \mid \alpha_j) = 2$.

Therefore, the elliptic Dynkin diagrams for a tubular system  are precisely the diagrams in Figure~\ref{def:GenCoxDiag}.

\begin{figure}[h]
  \centering
  \begin{tikzpicture}[scale=1.9]

    \node (03) at (0.5,0.5) [] {$D_4^{(1,1)}$};
    \node (G3) at (2,1) [circle, draw, fill=black!50, inner sep=0pt, minimum width=4pt]{};
    \node (A3) at (1.5,0.5) [circle, draw, fill=black!50, inner sep=0pt, minimum width=4pt]{};
    \node (B3) at (2.5,0.5) [circle, draw, fill=black!50, inner sep=0pt, minimum width=4pt]{};
    \node (C3) at (2,0.5) [circle, draw, fill=black!50, inner sep=0pt, minimum width=4pt]{};
    \node (E3) at (1.6, 0.1) [circle, draw, fill=black!50, inner sep=0pt, minimum width=4pt]{};
    \node (F3) at (2.4, 0.1) [circle, draw, fill=black!50, inner sep=0pt, minimum width=4pt]{};

    % Knoten
    \node (04) at (0.5,-1) [] {$E_6^{(1,1)}$};
    \node (A4) at (1,-1) [circle, draw, fill=black!50, inner sep=0pt, minimum width=4pt] {};
    \node (B4) at (1.5,-1) [circle, draw, fill=black!50, inner sep=0pt, minimum width=4pt]{};
    \node (C4) at (2,-1) [circle, draw, fill=black!50, inner sep=0pt, minimum width=4pt]{};
    \node (D4) at (2.5,-1) [circle, draw, fill=black!50, inner sep=0pt, minimum width=4pt]{};
    \node (E4) at (3,-1) [circle, draw, fill=black!50, inner sep=0pt, minimum width=4pt]{};
    \node (F4) at (2.4,-1.4) [circle, draw, fill=black!50, inner sep=0pt, minimum width=4pt]{};
    \node (G4) at (2.8,-1.8) [circle, draw, fill=black!50, inner sep=0pt, minimum width=4pt]{};
    \node (H4) at (2,-0.5) [circle, draw, fill=black!50, inner sep=0pt, minimum width=4pt]{};

    \node (05) at (4.2,0.5) [] {$E_7^{(1,1)}$};
    \node (A5) at (4.7,0.5) [circle, draw, fill=black!50, inner sep=0pt, minimum width=4pt] {};
    \node (B5) at (5.2,0.5) [circle, draw, fill=black!50, inner sep=0pt, minimum width=4pt]{};
    \node (C5) at (5.7,0.5) [circle, draw, fill=black!50, inner sep=0pt, minimum width=4pt]{};
    \node (D5) at (6.2,0.5) [circle, draw, fill=black!50, inner sep=0pt, minimum width=4pt]{};
    \node (E5) at (6.7,0.5) [circle, draw, fill=black!50, inner sep=0pt, minimum width=4pt]{};
    \node (H5) at (6.6,0.1) [circle, draw, fill=black!50, inner sep=0pt, minimum width=4pt]{};
    \node (F5) at (7.2,0.5) [circle, draw, fill=black!50, inner sep=0pt, minimum width=4pt]{};
    \node (G5) at (7.7,0.5) [circle, draw, fill=black!50, inner sep=0pt, minimum width=4pt]{};
    \node (I5) at (6.2,1) [circle, draw, fill=black!50, inner sep=0pt, minimum width=4pt]{};

    \node (06) at (4.2,-1) [] {$E_8^{(1,1)}$};
    \node (A6) at (4.7,-1) [circle, draw, fill=black!50, inner sep=0pt, minimum width=4pt] {};
    \node (B6) at (5.2,-1) [circle, draw, fill=black!50, inner sep=0pt, minimum width=4pt]{};
    \node (C6) at (5.7,-1) [circle, draw, fill=black!50, inner sep=0pt, minimum width=4pt]{};
    \node (D6) at (6.2,-1) [circle, draw, fill=black!50, inner sep=0pt, minimum width=4pt]{};
    \node (E6) at (6.7,-1) [circle, draw, fill=black!50, inner sep=0pt, minimum width=4pt]{};
    \node (F6) at (7.2,-1) [circle, draw, fill=black!50, inner sep=0pt, minimum width=4pt]{};
    \node (G6) at (7.7,-1) [circle, draw, fill=black!50, inner sep=0pt, minimum width=4pt]{};
    \node (H6) at (8.2,-1) [circle, draw, fill=black!50, inner sep=0pt, minimum width=4pt]{};
    \node (I6) at (5.7,-0.5) [circle, draw, fill=black!50, inner sep=0pt, minimum width=4pt]{};
    \node (J6) at (6.1,-1.4) [circle, draw, fill=black!50, inner sep=0pt, minimum width=4pt]{};

    % Kanten

    \draw[-] (A3) to (C3);
    \draw[-] (C3) to (B3);
    \draw[-] (C3) to (E3);
    \draw[-] (C3) to (F3);
    \draw[-] (G3) to (A3);
    \draw[-] (G3) to (B3);
    \draw[-] (G3) to (E3);
    \draw[-] (G3) to (F3);
    \draw[dashed] ([xshift=0.5]C3.north) to ([xshift=0.5]G3.south);
    \draw[dashed] ([xshift=-0.5]C3.north) to ([xshift=-0.5]G3.south);

    \draw[-] (A4) to (B4);
    \draw[-] (B4) to (C4);
    \draw[-] (C4) to (D4);
    \draw[-] (D4) to (E4);
    \draw[-] (C4) to (F4);
    \draw[-] (F4) to (G4);
    \draw[dashed] ([xshift=0.5]C4.north) to ([xshift=0.5]H4.south);
    \draw[dashed] ([xshift=-0.5]C4.north) to ([xshift=-0.5]H4.south);
    \draw[-] (B4) to (H4);
    \draw[-] (D4) to (H4);
    \draw[-] (F4) to (H4);

    \draw[-] (A5) to (B5);
    \draw[-] (B5) to (C5);
    \draw[-] (C5) to (D5);
    \draw[-] (D5) to (E5);
    \draw[-] (E5) to (F5);
    \draw[-] (F5) to (G5);
    \draw[-] (D5) to (H5);
    \draw[-] (I5) to (C5);
    \draw[-] (I5) to (E5);
    \draw[-] (I5) to (H5);
    \draw[dashed] ([xshift=0.5]D5.north) to ([xshift=0.5]I5.south);
    \draw[dashed] ([xshift=-0.5]D5.north) to ([xshift=-0.5]I5.south);

    \draw[-] (A6) to (B6);
    \draw[-] (B6) to (C6);
    \draw[-] (C6) to (D6);
    \draw[-] (D6) to (E6);
    \draw[-] (E6) to (F6);
    \draw[-] (F6) to (G6);
    \draw[-] (G6) to (H6);
    \draw[-] (I6) to (B6);
    \draw[-] (I6) to (D6);
    \draw[-] (I6) to (J6);
    \draw[-] (C6) to (J6);
    \draw[dashed] ([xshift=0.5]C6.north) to ([xshift=0.5]I6.south);
    \draw[dashed] ([xshift=-0.5]C6.north) to ([xshift=-0.5]I6.south);

  \end{tikzpicture}
  \caption{Elliptic Dynkin diagrams for the tubular elliptic root systems} \label{def:GenCoxDiag}
\end{figure}

\begin{Proposition}[{\cite[Theorem 9.6]{Sai85}}] \label{thm:SaitoClassification}
	Let $(\Phi,B)$ be an elliptic root system with elliptic root basis $B$. The elliptic Dynkin diagram for $(\Phi, B)$ is uniquely determined by the isomorphism class of $\Phi$. Conversely, the elliptic Dynkin diagram for $\Phi$ uniquely determines the isomorphism class of $(\Phi, B)$.
\end{Proposition}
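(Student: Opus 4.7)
Since this proposition is attributed to Sait\={o}, my plan is to give a self-contained argument that leverages the structure theorem of Proposition~\ref{RootSystem}, i.e.\ $\Phi = \Phi_{\fin}\oplus \ZZ a \oplus \ZZ b$, to reduce the statement to facts about finite and affine root systems. The proposition has two parts: \emph{well-definedness} (the diagram does not depend on the auxiliary choices made in the construction) and \emph{reconstruction} (the diagram determines $(\Phi,B)$ up to isomorphism).

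For well-definedness, I would first argue that different admissible choices of the $1$-dimensional subspace $U\le R$ with $U\cap L(\Phi)\ne\{0\}$ lead to isomorphic data. Since $L(\Phi)\cap R=\ZZ a\oplus\ZZ b$ is a rank-$2$ lattice by Lemma~\ref{RadicalLattice}, any two such $U$ differ by a $\GL_2(\ZZ)$-change of basis of $R\cap L(\Phi)$; this rescales the pair $(a,b)$ but preserves the subdiagram $\{\alpha_0,\dots,\alpha_n\}$ up to applying the Dynkin diagram automorphism group of $\Phi_{\aff}$. Next, the fact that any two root bases of an affine root system are conjugate under its Weyl group gives a unique affine Dynkin diagram. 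The integers $m_i$ with $b=\sum m_i\alpha_i$ are then intrinsic to this affine root system (they are the marks of the highest root of $\Phi_{\fin}$). In the tubular case the index $t$ with $m_t=m_{\max}$ is unique by our standing assumption that $B_{\max}$ is a singleton, so the extra vertex $t^*$ and its attachment edges $(\alpha_t^*\mid \alpha_j)$ for $j\in\{0,\ldots,n\}$ are unambiguously determined. The values of $(\alpha_t^*\mid \alpha_j)=(\alpha_t+a\mid\alpha_j)=(\alpha_t\mid\alpha_j)$ are read off the affine diagram, and $(\alpha_t^*\mid \alpha_t)=2$ (dotted double edge); this yields a canonical diagram.

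For reconstruction, given an elliptic Dynkin diagram $\Gamma$ I would proceed constructively. Deleting the vertex $t^*$ yields an extended Dynkin diagram of one of the types $D_4^{(1)}, E_6^{(1)}, E_7^{(1)}, E_8^{(1)}$ by Lemma~\ref{Lem:TubEllRoot}, which determines the finite root system $\Phi_{\fin}$ and the affine root system $\Phi_{\aff}$ (with its standard bilinear form) up to isomorphism. The position of $t^*$ in $\Gamma$ identifies the simple root $\alpha_t$ with $m_t=m_{\max}$. Build an abstract ambient space $V$ by adjoining two radical directions $a$ and $b$ to $V_{\fin}$, declaring $b=\sum m_i\alpha_i$, extending $(-\mid-)$ by zero on $\RR a+\RR b$, and setting $\alpha_t^*:=\alpha_t+a$. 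Then take $\Phi:=\Phi_{\fin}\oplus\ZZ a\oplus\ZZ b$; a direct check using $a,b\in R$ and the crystallographic condition on $\Phi_{\fin}$ shows that this is a simply-laced elliptic root system, and that its elliptic root basis (via the procedure of Definition~\ref{DefinitionRootBasis} applied to $U=\RR a$) has diagram $\Gamma$. Combined with Proposition~\ref{RootSystem}, any elliptic root system whose diagram is $\Gamma$ must coincide with this construction, hence two elliptic root systems with the same diagram are isomorphic.

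The main obstacle I anticipate is not the combinatorial bookkeeping but the verification that the recipe above really produces a \emph{generalized} root system in the sense of Definition~\ref{def:root_system}: one has to check that $s_\alpha(\Phi)\subseteq\Phi$ for every $\alpha\in\Phi$, and in particular that the reflection through $\alpha_t^*=\alpha_t+a$ preserves the lattice $\ZZ a\oplus\ZZ b$ in the radical. This reduces to the identity $s_{\alpha_t+a}(\gamma)=s_{\alpha_t}(\gamma)$ for $\gamma\in V_{\fin}$ plus $s_{\alpha_t+a}(x)=x$ for $x\in R$, which follows from $a\in R$ and $(\alpha_t+a\mid \alpha_t+a)=2$; once this is in place, the irreducibility and the signature condition are inherited directly from the finite part and from $\dim R=2$. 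The uniqueness claim in part~(b) then follows because an isomorphism of diagrams extends linearly to an isometry $V\to V'$ carrying $\Phi_{\fin}\oplus\ZZ a\oplus\ZZ b$ to $\Phi'_{\fin}\oplus\ZZ a'\oplus\ZZ b'$, and by Proposition~\ref{RootSystem} this isometry sends $\Phi$ onto $\Phi'$.
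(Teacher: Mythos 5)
The paper does not actually prove this statement: it is imported verbatim from Sait\=o (\cite[Theorem~9.6]{Sai85}) and used as a black box, so there is no in-paper argument to compare yours against. Judged on its own, your reduction to the structure theorem $\Phi=\Phi_{\fin}\oplus\ZZ a\oplus\ZZ b$ of Proposition~\ref{RootSystem} is a sensible way to make the statement self-contained \emph{in the paper's restricted setting} (simply-laced, tubular, hence rank at least four by Lemma~\ref{Lem:TubEllRoot}); the reconstruction half is essentially complete, including the verification that the recipe yields a generalized root system and that the resulting isometry matches the root bases. Be aware, though, that Sait\=o's theorem is far more general (arbitrary marked elliptic root systems, including non-simply-laced and rank-three cases, where --- as the paper's own remark after Proposition~\ref{RootSystem} notes --- the decomposition fails), so your argument proves only the special case the paper needs, not the cited theorem.

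The one step that is genuinely under-justified is the independence of the diagram from the choice of the marking $U$. You assert that two admissible subspaces $U,U'$ differ by a $\GL_2(\ZZ)$-change of basis of $R\cap L(\Phi)$ which ``rescales the pair $(a,b)$ but preserves the subdiagram up to diagram automorphism,'' but a lattice automorphism of $R\cap L(\Phi)$ does not a priori extend to an automorphism of $\Phi$, and without that the affine quotients $\Phi/U$ and $\Phi/U'$ need not be identified. The repair is available from your own toolkit: by Proposition~\ref{RootSystem} any linear map that is the identity on $V_{\fin}$ and restricts to an automorphism of the lattice $\ZZ a\oplus\ZZ b$ (extended arbitrarily to $R$) is an isometry of $V$ preserving $\Phi$, and $\GL_2(\ZZ)$ acts transitively on primitive vectors of that lattice, hence on admissible markings. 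You should state this explicitly, and also note the mild circularity bookkeeping: Proposition~\ref{RootSystem} is proved relative to one chosen $U$, so you need the observation that its proof goes through verbatim for every admissible choice.
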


\subsection{The tubular elliptic Weyl system $(W,S)$}\label{Subsec:EllipticWeylgroup}
	
\begin{Definition} \label{def:basic}
	Let $(V, B, (- \mid -))$ be a tubular  elliptic space.
	\begin{enumerate}
	\item[(a)] For $\alpha_i \in B$ we abbreviate $s_{\alpha_i}$ by $s_i$.
	\item[(b)] For $\Psi \subseteq \Phi$ let $\displaystyle W_{\Psi}: = \langle s_{\alpha} \mid \alpha \in \Psi \rangle$ be the  group  that is generated by the reflections of $V$ with respect to the hyperplanes $\alpha^\bot$, where $ \alpha$ is in  $\Psi $, and let $W:=W_\Phi$.
	\item[(c)] The set $S:=\lbrace s_{\alpha}\mid \alpha \in B \rbrace$ is called \defn{simple system} and its elements  \defn{simple reflections}. We call $(W,S)$ a \defn{tubular elliptic Weyl system}  and $W$  \defn{tubular elliptic  Weyl group}.
	\item[(d)] 			 
	The set $T:=\bigcup_{w\in W}wSw^{-1}$ is the \defn{set of reflections} for $(W,S)$.
	\item[(e)]  An element $c:=\left(\prod_{i \in M\setminus \lbrace t, t^{\star} \rbrace}s_{i}\right)\cdot s_{t}s_{t^{\star}}$ is called \defn{Coxeter transformation} where we take the first $|B|-2$ factors in arbitrary order.
	\end{enumerate}
\end{Definition}

\begin{lemdef}\label{LemDef:Transvection} Let $\alpha \in \Phi_{\fin}$, $x \in \{a,b\}$ and $k \in \ZZ$. Set $\TR_x(k\alpha) := s_\alpha s_{\alpha + kx}$. Then $\TR_x( k \alpha)$ is the transvection $v \mapsto v - (v \mid  \alpha) kx$.
\end{lemdef}
\begin{proof}
    This is a direct calculation.
\end{proof}

The next lemma is a consequence of the definition of $B$ (see \cite{Kac83}). 
	
\begin{Lemma}
Let $(\Phi,B)$ be an elliptic root system with elliptic root basis $B$. Then the following holds.
	\begin{enumerate}
		\item[(a)] $W= W_{B}$;
		\item[(b)] $\Phi = W_{B}(B)$.
	\end{enumerate}
\end{Lemma}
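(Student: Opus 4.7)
The plan is to prove both parts by exploiting the explicit description of $\Phi$ from Proposition~\ref{RootSystem}, namely $\Phi = \Phi_{\fin} \oplus \ZZ a \oplus \ZZ b$, and by carefully analysing what is added to the affine Weyl subgroup $W_{B_{\aff}} = \langle s_0, s_1, \ldots, s_n \rangle$ when we adjoin the extra reflection $s_{t^*}$ where $\alpha_{t^*} = \alpha_t + a$.

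For part (a), the inclusion $W_B \subseteq W$ is immediate since $B \subseteq \Phi$. For the reverse inclusion I would show that every reflection $s_\alpha$ with $\alpha \in \Phi$ lies in $W_B$. By Proposition~\ref{RootSystem}, such an $\alpha$ has the form $\beta + pa + qb$ with $\beta \in \Phi_{\fin}$ and $p,q \in \ZZ$. First, the standard theory of affine root systems (see \cite[Prop.\ 5.10]{Kac83}) gives $W_{B_{\aff}}(B_{\aff}) = \ARS = \Phi_b$, so all reflections $s_{\beta + qb}$ are already in $W_B$. Next I would compute $s_{t^*} s_t$ directly: since $a \in R$, for every $u \in V$ one has $(s_{\alpha_t + a}\, s_{\alpha_t})(u) = u + (\alpha_t \mid u)\, a$, i.e.\ a ``symplectic translation'' in the $a$-direction. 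Conjugating this element by $W_{B_{\aff}}$ produces translations $T_\gamma : u \mapsto u + (\gamma \mid u)\, a$ for all $\gamma$ in the $W_{B_{\aff}}$-orbit of $\alpha_t$, hence for every $\gamma \in \Phi_{\fin}$ (up to sign). Because $\Phi_{\fin}$ is simply-laced and irreducible of rank $\geq 2$, suitable products of such $T_\gamma$ shift any prescribed root $\beta + qb$ by an arbitrary integer multiple of $a$; concretely $T_\gamma \, s_{\beta + qb}\, T_\gamma^{-1} = s_{\beta + qb + (\gamma\mid\beta)a}$, and the pairings $(\gamma\mid\beta)$ realise every integer as $\gamma$ ranges over $\Phi_{\fin}$. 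Iterating, $s_{\beta + pa + qb} \in W_B$ for every $p \in \ZZ$.

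For part (b), the inclusion $W_B(B) \subseteq \Phi$ is again clear from $B \subseteq \Phi$ and the fact that $\Phi$ is closed under its reflections. For $\Phi \subseteq W_B(B)$ I would argue analogously to (a): applying $W_{B_{\aff}}$ to $B_{\aff}$ yields the full affine root system $\Phi_b$, which already contains every root of the form $\beta + qb$. The translations $T_\gamma \in W_B$ constructed above then act on these roots by $T_\gamma(\beta + qb) = \beta + qb + (\gamma \mid \beta)\, a$, and composition produces arbitrary shifts by $\ZZ a$. Hence every element $\beta + pa + qb \in \Phi$ belongs to the $W_B$-orbit of $B_{\aff} \subseteq B$.

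The main (and essentially only) obstacle is the precise bookkeeping around the $a$-direction: one has to check that the composition of the $T_\gamma$ really produces every integer shift, which uses the simply-laced hypothesis (H1) together with irreducibility of $\Phi_{\fin}$ (guaranteed by Lemma~\ref{Lem:TubEllRoot}) to ensure the existence of neighbours $\alpha_i, \alpha_j$ with $(\alpha_i \mid \alpha_j) = -1$. Once this is in place both (a) and (b) fall out simultaneously from the orbit calculations above.
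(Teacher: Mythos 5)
Your proof is correct and uses essentially the same mechanism as the paper: the affine Weyl subgroup already accounts for $\Phi_b$, and the transvection $s_{t^*}s_t$ (together with its $W_{B_{\aff}}$-conjugates) supplies the shifts in the $\ZZ a$-direction. The only differences are cosmetic: the paper deduces (a) from (b) in one line via $s_{w(\alpha)} = w s_\alpha w^{-1}$ rather than redoing the orbit computation, and your phrase that the pairings $(\gamma \mid \beta)$ ``realise every integer'' is an overstatement in a simply-laced system (they lie in $\{0,\pm 1,\pm 2\}$), though your subsequent iteration step makes this harmless.
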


\begin{proof}
We first show (b). By Lemma~\ref{Lem:Types} the generalized root system $\Phi$ contains the affine root system $\Phi_b$. Let $\alpha \in B_{\fin}$ be such that 
$(\alpha \mid \alpha_t) = -1$. Then for $k, \ell \in \ZZ$ we have  $\alpha +\ell b \in \Phi_b$ and $\TR_a(\alpha_t)^k(\alpha +\ell b) = \alpha + k a +\ell b \in W_B(B)$.  We conclude assertion (b) from Lemma~\ref{RootSystem} by observing that  $W_{\fin} = \langle s_{\beta} \mid \beta \in B_{\fin} \rangle \leq W_B$ acts transitively on $\Phi_{\fin}$ and that the elements in 
$W$ fix the elements in $R$.

	If $\gamma$ is in $\Phi$, then there are $w \in W_B$ and $\alpha \in B$  such that  $w(\alpha) = \gamma$ by (b). Then $s_{\gamma} = s_{w(\alpha)} = ws_\alpha w^{-1}$ is in $W_B$, and therefore $W = W_B$, which is (a).
\end{proof}

In the following observation, we use the fact that $\Phi$ is a crystallographic  root system.

\begin{Lemma}\label{lem:crystallographic}
If $\emptyset \neq \Psi \subseteq \Phi$, then $w(L(\Psi)) \subseteq L(\Psi)$ holds for every $w \in W_\Psi$.
\end{Lemma}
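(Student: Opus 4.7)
The statement is really a direct consequence of the crystallographic condition (c) in Definition~\ref{def:root_system} together with linearity, so my plan is essentially to reduce the verification to the generators of $W_\Psi$ and then to a single root in $\Psi$.

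First I would recall that $W_\Psi = \langle s_\alpha \mid \alpha \in \Psi\rangle$ and that $L(\Psi) = \spanz(\Psi)$ is a $\ZZ$-lattice inside $V$. Since every $w \in W_\Psi$ is a finite product of generators $s_\alpha$ with $\alpha \in \Psi$, an immediate induction on the word length reduces the claim to showing that $s_\alpha(L(\Psi)) \subseteq L(\Psi)$ for each $\alpha \in \Psi$. Because $s_\alpha$ is $\RR$-linear (and in particular $\ZZ$-linear), it is enough to check that $s_\alpha(\beta) \in L(\Psi)$ for every $\beta \in \Psi$.

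For this final step I would just plug into the reflection formula: by Section~\ref{subsec:notation},
\[
s_\alpha(\beta) \;=\; \beta - \frac{2(\alpha \mid \beta)}{(\alpha \mid \alpha)}\,\alpha.
\]
Both $\alpha$ and $\beta$ lie in $\Psi \subseteq \Phi$, so the crystallographic condition (c) of Definition~\ref{def:root_system} forces the coefficient $\tfrac{2(\alpha \mid \beta)}{(\alpha \mid \alpha)}$ to be an integer. Hence $s_\alpha(\beta) \in \beta + \ZZ\alpha \subseteq L(\Psi)$, which is exactly what was needed.

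There is really no serious obstacle here: the only non-formal ingredient is the crystallographic integrality, and everything else is linearity and the generation of $W_\Psi$ by the $s_\alpha$. I would state it as one short paragraph ending with an induction remark to cover arbitrary $w \in W_\Psi$.
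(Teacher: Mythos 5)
Your argument is correct and is precisely the ``easy to check'' verification the paper has in mind: the paper states this lemma without proof, remarking only that it is an immediate consequence of the crystallographic property. Your reduction to generators $s_\alpha$ with $\alpha\in\Psi$, then to a single $\beta\in\Psi$ via $\ZZ$-linearity, and the final appeal to $\tfrac{2(\alpha\mid\beta)}{(\alpha\mid\alpha)}\in\ZZ$ fills in exactly the intended details.
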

\begin{proof}
If we apply $s_\alpha$ to a vector $v \in V$, then we only change $v$
by subtracting $\frac{2(v \mid \alpha)}{(\alpha\mid \alpha)} \alpha$ from $v$, and $\frac{2(v \mid \alpha)}{(\alpha\mid \alpha)} \alpha$ is an integer by Definition~\ref{def:root_system}.
\end{proof}

\subsection{The affine Coxeter subgroups $W_a$ and $W_b$ of $W$}\label{CoxSubgroupWb}
Let $\Gamma_{\fin}$ and $\Gamma_b = \Gamma_{\aff}$ be the subdiagrams of $\Gamma$ with set of vertices $M\setminus{\{0,t^\star\}}$ and   $M\setminus{\{t^\star\}}$, respectively, and set
$$W_{\fin}:= \langle s_i \mid 1 \leq i \leq n\rangle ~\mbox{and}~W_b:= \langle s_i \mid 0 \leq i \leq n \rangle .$$ 
Then $W_{\fin} \leq W_b \leq W$. Notice that the diagram $\Gamma_{b}$ is always a star (see Subsection~\ref{Sec:Notation}). We say that a Coxeter system whose diagram is a star is of \defn{star type}.

\begin{Proposition}\label{Prop:StructureSubsystems}
The following holds.
\begin{itemize}
	\item[(a)] It is $(W_{\fin}, S_{\fin})$ a finite Coxeter system of type $\Gamma_{\fin}$ with simple system $S_{\fin} := \{s_i  \mid 1 \leq i \leq n\}$ and root system $\FRS$.
	\item[(b)] It is $(W_b, S_b)$ an affine Coxeter system of type $\Gamma_{\aff}$ with simple system $S_b:= S_{\aff}:=  \{i \mid 0 \leq i \leq n\}$, root system  $\Phi_b = \Phi_{\aff} = \Phi_{\fin} + \ZZ b$  and linear representation $W_b \rightarrow $ $\GL(V_{b}), s_i \mapsto s_i\mid _{V_b}$.
    \item[(c)] Every affine Coxeter system of star type appears as a Coxeter system $(W_b, S_b)$  in (b).
    \end{itemize}
\end{Proposition}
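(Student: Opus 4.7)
The plan is to reduce both statements to the classical theory of finite and affine Weyl groups by exploiting the direct sum decompositions of $V$ implied by Proposition~\ref{RootSystem}. The key point is to restrict the actions of $W_{\fin}$ and $W_{b}$ to subspaces where the bilinear form is, respectively, positive definite and positive semidefinite with a $1$-dimensional radical, and then to transport the classical results through these restrictions.

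For (a) I would first note that $V_{\fin} \cap R = 0$ since $(-\mid-)|_{V_{\fin}}$ is positive definite by Lemma~\ref{Lem:Types}(a); a dimension count ($\dim V_{\fin} = n$, $\dim R = 2$) then forces $V = V_{\fin} \oplus R$. For $1 \leq i \leq n$ the root $\alpha_{i}$ lies in $V_{\fin}$ and is orthogonal to $R$, so $s_{i}$ fixes $R$ pointwise and preserves $V_{\fin}$. Since any element of $W_{\fin}$ acting trivially on $V_{\fin}$ also acts trivially on $R$, and hence on all of $V$, the restriction $W_{\fin} \to \GL(V_{\fin})$ is injective. Its image is generated by the simple reflections of the finite crystallographic root system $\Phi_{\fin}$ with simple system $B_{\fin}$, which by the classical theory of finite Weyl groups is the Coxeter group of type $\Gamma_{\fin}$.

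For (b) I would first establish $V = V_{\aff} \oplus U$. Proposition~\ref{RootSystem} gives $\alpha_{0} = b - \sum_{i} m_{i}\alpha_{i} \in V_{\fin} + \RR b$, so $V_{\aff} = V_{\fin} \oplus \RR b$ (the sum is direct since $b \in R$), of dimension $n+1$; since $a \notin V_{\aff}$ (otherwise $V_{\aff} = V$), we get $V_{\aff} \cap U = 0$ and by dimension $V = V_{\aff} \oplus U$. For $0 \leq i \leq n$ the root $\alpha_{i}$ lies in $V_{\aff}$ and is orthogonal to $U \subseteq R$, so $s_{i}$ fixes $U$ pointwise and preserves $V_{\aff}$; the same faithfulness argument as in (a) shows $W_{b} \to \GL(V_{\aff})$ is injective. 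Composing with the isomorphism $p_{U}|_{V_{\aff}} \colon V_{\aff} \to V/U$ (injective because $V_{\aff} \cap U = 0$, surjective by dimension), this restriction identifies with the Tits action on $V/U$ of the affine Weyl group of $\Phi/U$, which by Lemma~\ref{Lem:QuotientsEllipticRootS}(c) and the classical theory of affine Weyl groups is a Coxeter system of type $\Gamma_{\aff}$. The root-system identity $\Phi_{b} = \Phi \cap V_{\aff} = \Phi_{\fin} + \ZZ b$ follows by intersecting the decomposition $\Phi = \Phi_{\fin} \oplus \ZZ a \oplus \ZZ b$ from Proposition~\ref{RootSystem} with $V_{\aff}$, the $\ZZ a$-component vanishing because $V_{\aff} \cap \RR a = 0$. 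The only mildly delicate point throughout is the faithfulness of the two restriction maps; once that is in hand everything reduces to standard statements about finite and affine root systems.
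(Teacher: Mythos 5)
Your argument is correct. Note, however, that the paper offers no proof of this proposition at all: it is introduced with ``we collect what we already know about these groups'' and treated as a package of standard facts, with the analogous statement for $W_a$ justified in Remark~\ref{WaCoxeter} simply by citing Kac after observing that the form restricted to $V_a$ is positive semidefinite with one-dimensional radical. Your write-up supplies exactly the argument the paper leaves implicit -- Lemma~\ref{Lem:Types} for the positivity properties on $\VFIN$ and $\VAFF$, the classical theory of finite and affine Weyl groups for the Coxeter presentations -- and its genuine added value is that it makes explicit the one point that is not completely automatic, namely that $W_{\fin}$ and $W_b$ are defined as groups acting on all of $V$, so one must check that restriction to $\VFIN$ (respectively $\VAFF$, or equivalently projection to $V/U$) is faithful. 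Your faithfulness argument via the decompositions $V=\VFIN\oplus R$ and $V=\VAFF\oplus U$, with the generators fixing the complementary summand pointwise, is sound, and your reading of $\Phi_b=\FRS+\ZZ b$ (correcting the paper's apparent typo $\Phi+\ZZ b$) via Proposition~\ref{RootSystem} is the intended one, as confirmed by the identity $\Phi/U\cong\Phi_b=\FRS+\ZZ b$ used later in Section~\ref{sec:HurwitzElliptic}.
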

\begin{proof} (a) and (b) are consequences of Lemma~\ref{Lem:Types}, and (c) of the construction.
\end{proof}

\begin{remark}\label{WaCoxeter}
Set $S_a:= \{s_i , s_{t^\star}  \mid 1 \leq i \leq n \}$, 
$W_a:= \langle S_a \rangle  \leq W$ and let $V_a$ be the subspace of $V$ generated by $\{ \alpha_i, \alpha_{t^\star}\mid 1 \leq i \leq n \}$.  
Then $(- \mid -)$ restricted to $V_a$ is positive  semidefinite  with one dimensional radical, and $\Phi_a:= \Phi \cap V_a = \Phi_{\fin} + \ZZ a$ is a generalized root system in $V_a$. Therefore, $(W_a, S_a)$ is an affine Coxeter system (see  \cite[Proposition~4.7 (e)]{Kac83}). From the fact that $\Phi_a/U$ is isomorphic to $\Phi_{\fin}$, we conclude that $W_a$ is also an affine Coxeter system of type $\Gamma_b = \Gamma_{\aff}$.
\end{remark}

\begin{lemdef}\label{Lem:StructurAffineCox}
Let $x \in \{a,b\}$ and $\lambda = \sum_{i=1}^{n} c_i \alpha_i \in  L(\FRS)$.
\begin{itemize}
\item[(a)]  Let $\TR_x(\lambda) := \prod_{i = 1}^k \TR(c_i\alpha_i)$.
Then $\TR_x(\lambda)$ is the map $\displaystyle \TR_x(\lambda) : V \rightarrow V, v \mapsto v - (v \mid  \lambda)x$.
\item[(b)] The subgroup $N(\Gamma_{\fin},x) := \langle \TR_x(\alpha_i) \mid 1 \leq i \leq n \rangle$ of $W$ is isomorphic to the abelian group $L(\Phi_{\fin})$, and therefore 
free abelian of rank $n$.	
\item[(c)] $N(\Gamma_{\fin},x)$ is a normal subgroup of $W_x$, and $W_{\fin}$ acts on 	$N(\Gamma_{\fin},x) $ by $w \TR_x(\lambda) w^{-1} = \TR_x(w(\lambda))$ for $w \in W_{\fin}$.
\item[(d)] $N(\Gamma_{\fin}, a)$ and $N(\Gamma_{\fin}, b)$ commute.
\item[(e)] $W_x$ is a semi-direct product of $N(\Gamma_{\fin},x)$ by $W_{\fin}$, that is the short exact sequence 
$$1 \rightarrow  N(\Gamma_{\fin},x) \rightarrow W_x  \rightarrow W_{\fin} \rightarrow 1$$
splits.
\item[(f)] $W$ is a semi-direct product of $N(\Gamma_{\fin},a)$ by $W_{b}$, that is the short exact sequence 
$$1 \rightarrow  N(\Gamma_{\fin},a) \rightarrow W \rightarrow W_{b} \rightarrow 1$$
splits.
\end{itemize}
\end{lemdef}
\begin{proof}
 It is   $\TR_x(\lambda) \TR_x(\mu) = \TR_x(\lambda +\mu)$
 for $\lambda, \mu \in L(\Phi_{\fin})$, which implies (a) and that
 $N(\Gamma_{\fin},x)$ is isomorphic to $L(\alpha_1, \ldots , \alpha_n) = L(\FRS)$, which is free abelian of rank $n$. 
 Assertion (c) follows from the fact that $w s_{\alpha_i} w^{-1}  = s_{w(\alpha_i)}$ for all $w \in W_{\fin}$. Assertion (d) follows from the fact that $N(\Gamma_{\fin}, a)$ and $N(\Gamma_{\fin}, b)$ 
 are two normal subgroups, which commute. The last assertion is a direct consequence of Proposition~\ref{Prop:StructureSubsystems} and Remark~\ref{WaCoxeter}(see also \cite{Hum90}).
 \end{proof}

We conclude from Lemma~\ref{Lem:StructurAffineCox} that every element $w \in W_b$ can be written in normal form $w = w_{\fin} t_b(\lambda)$ for some $w_{\fin} \in W_{\fin}$ and some $\lambda \in L(\Phi_{\fin})$; written in vector notation as 
$$w = \left( \begin{array}{c} w_{\fin} \\ \lambda 
\end{array} \right).$$

The multiplication  for $w_1, w_2 \in W_{\fin}$ and $\lambda, \nu \in L(\Phi_{\fin})$ is given by 
$$\left( \begin{array}{c} w_{1} \\ \lambda 
\end{array} \right) \left( \begin{array}{c} w_{2} \\ \nu
\end{array} \right) = \left( \begin{array}{c} w_{1} w_2 \\ w_2^{-1}(\lambda) + \nu \end{array} \right).$$

\section{The  hyperbolic cover of an elliptic Weyl system}\label{Sec:HyperbolicCover}
\subsection{The hyperbolic extended space}\label{Sec:HyperbolicExtended}

Let $(\hat{V}, (- \mid -))$ be an $\RR$-space with signature $(n+2, 2, 0)$. Then, due to Sylvester's law of inertia, there is a basis  $X$  of $\hat{V}$ such that the Gram matrix of $(- \mid -)$ has diagonal form and its first $n+2$ diagonal entries are positive and its last $2$ entries are negative. Thus, $\hat{V} = V_+ \oplus V_-$ is the orthogonal direct sum of $V_+$,  the $\RR$-span of the first $n+2$ basis vectors and $V_-$,  the $\RR$-span of the last two. Then $(v\mid v) > 0$ for every non-zero vector in $V_+$ and $(v\mid v) < 0$ for every non-zero vector in $V_-$.
Recall that two vectors $v_1,v_2 \in V$ form a  \defn{hyperbolic pair}, if $(v_i \mid v_i) = 0$, for $i = 1,2$  and  $(v_1\mid v_2) = 1$.

\begin{Lemma}\label{HyperbolicExtension}
$(\hat{V},  (- \mid -))$ contains a subspace $V$ that is elliptic and of dimension $n+2$.
\end{Lemma}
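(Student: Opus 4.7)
The plan is to construct $V$ explicitly by splitting off a $2$-dimensional totally isotropic subspace (which will be the radical of the restricted form) together with an $n$-dimensional positive definite complement orthogonal to it. The signature of an elliptic space is $(n,0,2)$, so this is exactly what is needed.

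\textbf{Step 1 (diagonalize the form).} By Sylvester's law of inertia, as already stated in the excerpt, we may choose a basis $X = \{e_1,\ldots,e_{n+2},f_1,f_2\}$ of $\hat{V}$ such that the Gram matrix is diagonal, $(e_i \mid e_i) = 1$, $(f_j \mid f_j) = -1$, and all mixed pairings vanish. Then $\hat{V} = V_+ \oplus V_-$ with $V_+ = \spanr(e_1,\ldots,e_{n+2})$ and $V_- = \spanr(f_1,f_2)$.

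\textbf{Step 2 (build a totally isotropic $2$-plane).} Set
$$r_1 := e_1 + f_1, \qquad r_2 := e_2 + f_2, \qquad R := \spanr(r_1,r_2).$$
A direct computation gives $(r_1 \mid r_1) = 1-1 = 0$, $(r_2\mid r_2) = 0$, and $(r_1 \mid r_2) = 0$; moreover $r_1,r_2$ are linearly independent since their $V_-$-projections $f_1,f_2$ are. Hence $R$ is a $2$-dimensional totally isotropic subspace.

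\textbf{Step 3 (take an orthogonal positive-definite complement).} Let $W := \spanr(e_3,\ldots,e_{n+2})$, an $n$-dimensional subspace on which the form is positive definite. For $3 \le i \le n+2$ we have $(e_i \mid r_1) = (e_i\mid e_1) + (e_i\mid f_1) = 0$ and likewise $(e_i \mid r_2) = 0$, so $W \perp R$. Since any vector in $W$ has zero $V_-$-component while a non-zero element of $R$ has non-zero $V_-$-component, $R \cap W = \{0\}$, and thus $V := R \oplus W$ has dimension $n+2$.

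\textbf{Step 4 (check the signature).} The form on $W$ is positive definite (rank $n$); the form on $R$ is identically zero; and $R \perp W$. Therefore every element of $R$ lies in the radical of $(-\mid-)|_V$. Conversely, if $v = r + w$ with $r \in R$, $w \in W$ is in this radical, then for each $3 \le i \le n+2$ one has $0 = (v\mid e_i) = (w \mid e_i)$, forcing $w = 0$ since $e_3,\ldots,e_{n+2}$ is an orthonormal basis of $W$. Hence $\rad((-\mid-)|_V) = R$, which is $2$-dimensional, and $V/R \cong W$ carries a positive definite form of dimension $n$. Consequently the signature of $(-\mid-)|_V$ is $(n,0,2)$, so $V$ is an elliptic subspace of $\hat{V}$ of dimension $n+2$, as required.

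There is really no obstacle here beyond bookkeeping; the only thing to be careful about is ensuring the radical is exactly $2$-dimensional, not larger, which is why one should intersect with $W$ and use that $W$ carries a non-degenerate form.
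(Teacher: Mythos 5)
Your proof is correct and follows essentially the same approach as the paper: apply Sylvester's law of inertia and combine positive and negative basis directions into two mutually orthogonal isotropic vectors, then adjoin an $n$-dimensional positive definite orthogonal complement, giving signature $(n,0,2)$. The only cosmetic difference is that the paper first extracts an orthogonal decomposition $\hat{V}=V_1\perp V_2\perp V_3$ into two hyperbolic planes and a positive definite part and then picks one isotropic vector from each plane (a decomposition it reuses in the next lemma to define $\tilde{V}$ and $b'$), whereas you write the totally isotropic $2$-plane down directly.
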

\begin{proof} 
Let $u_1 \in V_-$ and $u_2 \in V_+$. Then there are $\lambda_i \in \RR$ such that $(u_i \mid u_i )  =(-1)^{i}\lambda_i$ for $i = 1,2$. Let $\mu_1, \mu_2 \in \RR$ such that $\mu_1^2 =  \lambda_2$ and $\mu_2^2 =  \lambda_1$ and set
$$\tilde{v}_1:= \mu_1 u_1 + \mu_2 u_2~\mbox{and} ~\tilde{v}_2:= \mu_1 u_1  - \mu_2 u_2.$$
Then $\tilde{v}_1$ and $\tilde{v}_2$  are isotropic. Let $v_1 := \tilde{v}_1$ and $v_2:=-1/(2\lambda_1\lambda_2)\tilde{v}_2$. 
Then $v_1$ and $v_2$ are isotropic vectors such that $(v_1 \mid v_2) = 1$, that is, $(v_1,v_2)$ is a hyperbolic pair in the space $V_1:= \langle v_1, v_2\rangle = \langle u_1, u_2 \rangle$, and  $V_1$ and  $V_1^\bot$ are spaces of signature $(1,1,0)$ and $(n+1,1,0)$, respectively.  We can repeat the procedure in $ V_1^\bot$ and obtain an orthogonal  decomposition  of $\hat{V} = V_1 \bot V_2 \bot V_3$, where the signature of  $V_2$ is $(1,1,0)$ and $V_3:= (V_1  +V_2)^\bot$ has signature $(n,0,0)$.  Let $(v_3,v_4)$ be a hyperbolic pair in $V_2$, and $a:= v_1$ and $b:= v_3$. Then the span of  $a$, $b$ and of $V_3$ is a subspace $V$, as stated.
\end{proof}

\begin{remark}\label{Rem:Extension}
Note that $\hat{V}$ is up to isometry the non-degenerate vector space of smallest dimension that contains $(V, (- \mid -))$ as a subspace: let $(V^\prime, ( -\mid -))$ be a non-degenerate $\RR$-space that contains $(V, (- \mid -))$ as a subspace. Then $a,b \in (V_{\fin} +\RR b)^\perp$
Since $V^\prime$ is non-degenerate, it is $a^\bot \neq b^\bot$, and  there is a 
 vector $v$ in $(V_{\fin}+\RR b)^\perp$ that is not perpendicular to $a$. 
 As $a$ is an isotropic vector,
 the signature of $V_1:= \langle a,v \rangle$ is $(1,1,0)$. Repeating this argument for $b \in (V_3 + V_1)^\perp$  where $V_3:= V_{\fin}$ we obtain a subspace $V_2:= \langle b,u \rangle \leq (V_3 + V_1)^\perp$ of signature $(1,1,0)$, and therefore $\hat{V}$ and $V_1 \bot V_2 \bot V_3\leq V^\prime$  are isometric.
\end{remark}

We will continue to use the notation introduced in 
Remark~\ref{Rem:Extension}.

\begin{Lemma}
Let $\tilde{V}:= \langle a \rangle \bot V _2 \bot V_3 $ and let $b^\prime \in V_2$ be such that $(b,b^\prime)$ is a hyperbolic pair. Then $$\tilde{V} = V \oplus \langle b'\rangle, $$ and the signature of $\tilde{V}$ equipped with the bilinear form on $\hat{V}$ restricted to $\tilde{V}$ is  $(n+1,1,1)$.
\end{Lemma}
\begin{proof} 
The assertion follows from the fact that $V_2 = \langle b,b^\prime \rangle$.
\end{proof}

We call $(\tilde{V}, (-\mid-))$ and $(\hat{V}, (-\mid -))$ \defn{hyperbolic extended spaces} of $V$.

\subsection{The hyperbolic covers $(\tilde{W}, \tilde{S})$ and $(\hat{W}, \hat{S})$ of $(W$,S)}\label{sec:HypCovers}
Let $\tilde{s}_\alpha$  be the reflection in $\tilde{V}$ with respect to the root $\alpha \in V \subset \tilde{V}$, and abbreviate $\tilde{s}_{\alpha_i}$  by $\tilde{s}_i$. Considering $\alpha \in V \subset \tilde{V}  \subset \hat{V}$ define $\hat{s}_{\alpha_i}$ and $\hat{s}_i$ analogously to $\tilde{s}_{\alpha_i}$ and $\tilde{s}_i$.

   \begin{Definition}\label{Def:HypCover}
   We set
   \begin{itemize}
\item[(a)]  $ \tilde{S}  := \{\tilde{s}_i , \tilde{s}_{t^\star} \mid 0 \leq i \leq n \}~\mbox{and}~\tilde{W} :=  \langle \tilde{S} \rangle$, and 
\item[(b)] $ \hat{S}  := \{\hat{s}_i , \hat{s}_{t^\star} \mid 0 \leq i \leq n \}~\mbox{and}~\hat{W} :=  \langle \hat{S} \rangle.$
\end{itemize}     
We call $(\tilde{W}, \tilde{S})$ and $(\hat{W}, \hat{S})$ the \defn{hyperbolic covers} of $(W,S)$.
   \end{Definition}

\begin{remarkOHNE}
 We call $\tilde{W}$ and $\hat{W}$ covers of $W$ as they are central extensions of $W$ (see Proposition~\ref{StructureTildeW}). 
\end{remarkOHNE}

 Next, we recall a tool that is helpful in  understanding $\tilde{W}$ and $\hat{W}$ (see \cite{Sai85}).

\subsection{The Eichler-Siegel map}
Recall that $U = \RR a \leq R$, and  consider the \defn{Eichler-Siegel map} $\tilde{E}$ on $\tilde{V}$ (see \cite[Section (1.14)]{Sai85})
$$\tilde{E}: V \otimes_\ZZ  V/U \rightarrow \END(\tilde{V} ), ~ \sum_i f_i \otimes g_i \mapsto \left(v \mapsto v- \sum_i  (g_i \mid v)f_i \right).$$
As in \cite[Section (1.14)]{Sai85} we define a semi-group structure $\circ$ in $V \otimes_{\ZZ}  V/U$ by
$$\varphi_1 \circ \varphi_2:= \varphi_1 + \varphi_2 - (\varphi_1 \mid \varphi_2),$$
where $(\varphi_1 \mid \varphi_2) := \sum_{i_1,i_2} f_{i_1}^1 \otimes (g_{i_1}^1 \mid f_{i_2}^2) g_{i_2}^2$ for $\varphi_j = \sum_{i_j} f_{i_j}^j \otimes g_{i_j}^j$, for $j = 1,2$, is a non-symmetric bilinear form on  $V \otimes_{\ZZ}  V/U$.
We recall some properties of the map $\tilde{E}$. All the proofs are given in
\cite{Sai85}.

\begin{Lemma}[{\cite[Sections (1.14) and (1.15)]{Sai85}}] \label{Eichler} 
The following hold.
\begin{itemize}
\item[(a)] $\tilde{E}$ is injective.
\item[(b)] $\tilde{E}$ is a homomorphism of semi-groups that is $\tilde{E}(\varphi \circ \psi) = \tilde{E}(\varphi) \tilde{E}(\psi)$.
\item[(c)] the subspace $R \otimes  V/U $ of $V \otimes V/U$ is closed under $\circ$, and  $\circ$ and $+$ agree on $R \otimes V/U $.
\item[(d)] It is  $\tilde{E}(u_1 \otimes u_2 + v_1 \otimes v_2 ) = \tilde{E}(u_1 \otimes u_2) \tilde{E}(v_1 \otimes v_2)$, whenever $(u_2 \mid v_1)  = 0$.
\item[(e)] $\tilde{s}_{\alpha} = \tilde{E}(\alpha \otimes \alpha)$ for all non-isotropic $\alpha \in V$.
\item[(f)] The inverse of $\tilde{E}$ on $\tilde{W}$ is well-defined: $\displaystyle \tilde{E}^{-1}: \tilde{W} \rightarrow V \otimes V/U.$
\item[(g)]  $\tilde{E}^{-1}(\tilde{W}) \subseteq L(\Phi) \otimes_{\ZZ} (L(\Phi) / U)$;
%\item[(g)] $T_{R}:= \tilde{E}^{-1}(\tilde{W}) \cap \left(R \otimes (V/ U) \right)$ is a full lattice in $R \otimes (V/ U)$.
\end{itemize}
\end{Lemma}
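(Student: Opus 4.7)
My plan is to verify items (a)--(g) in order, since each item leans on the earlier ones; all but (g) reduce to routine bookkeeping with the Eichler-Siegel formula, so (g) will be the main obstacle.

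For (a), the map $V/U \to \tilde V^{*}$, $g \mapsto (g \mid -)$, has kernel consisting of those lifts $\tilde g \in V$ that lie in the radical of $(-\mid-)$ on $\tilde V$, which by Section~\ref{Sec:HyperbolicExtended} is exactly $U$; hence $V/U \hookrightarrow \tilde V^{*}$, and tensoring gives $\tilde V \otimes V/U \hookrightarrow \tilde V \otimes \tilde V^{*} = \END(\tilde V)$, which (up to the shift by $\idop$) is $\tilde E$. Item (b) follows by expanding $\tilde E(\varphi_1)\tilde E(\varphi_2)(v)$: the iterated substitution produces two linear correction terms and a quadratic cross-term, and the latter equals exactly $v-\tilde E((\varphi_1 \mid \varphi_2))(v)$, matching the definition of $\circ$.

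For (c), a lift to $V$ of any $g \in V/U$ is orthogonal to every $r \in R$ (since $R$ is the radical of the form on $V$), so the mixing term $(\varphi_1 \mid \varphi_2)$ vanishes whenever $\varphi_2 \in R \otimes V/U$; this simultaneously yields closure of $R \otimes V/U$ under $\circ$ and the agreement of $\circ$ with $+$ there. Item (d) is the same vanishing applied to the pure tensor $v_1 \otimes v_2$, combined with (b). The second (d), $\tilde s_\alpha = \tilde E(\alpha \otimes \alpha)$, is a direct substitution using $(\alpha \mid \alpha) = 2$ from (H1) (with a scalar absorbed into one tensor slot in the general non-isotropic case). For (e) and (f), the generators $\tilde s_\alpha$ with $\alpha \in B \subset L(\Phi)$ lie in $\tilde E(L(\Phi) \otimes L(\Phi)/U)$ by the second (d), and this image is closed under composition because the crystallographic condition together with (H1) gives $(g \mid f) \in \ZZ$ for $g,f \in L(\Phi)$, so the mixing term stays in $L(\Phi) \otimes L(\Phi)/U$; combined with (a), reflections being their own inverses, and (b), this makes $\tilde E^{-1}$ well-defined on $\tilde W$ and forces $\tilde E^{-1}(\tilde W) \subseteq L(\Phi) \otimes L(\Phi)/U$.

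The main work is (g). By (f), $T_R$ already sits inside $(L(\Phi)\cap R)\otimes L(\Phi)/U$, a discrete subgroup of $R \otimes V/U$, so it suffices to exhibit $\dim(R \otimes V/U) = 2(n+1)$ linearly independent elements of $T_R$. The transvection subgroups $N(\Gamma_{\fin},a), N(\Gamma_{\fin},b) \subset \tilde W$ from Section~\ref{CoxSubgroupWb}, via the identification $\TR_x(\lambda) = \tilde E(x \otimes \lambda)$, contribute the rank-$2n$ sublattice $\ZZ a \otimes L(\Phi_{\fin}) + \ZZ b \otimes L(\Phi_{\fin})$ of $R \otimes V_{\fin}/U$. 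To reach the remaining two dimensions in $R \otimes (\RR b + U)/U$, I would pick a simple root $\alpha \in \FRS$ with $(\alpha \mid \alpha_0) = -1$ (which exists in every admissible type $D_4^{(1)}, E_6^{(1)}, E_7^{(1)}, E_8^{(1)}$ from Lemma~\ref{Lem:TubEllRoot}, since $\alpha_0$ is connected in the extended Dynkin diagram to at least one simple root), and conjugate $\tilde s_0$ by $\TR_x(\alpha)$ for $x \in \{a,b\}$; this produces $\tilde s_{\alpha_0 + x} \in \tilde W$, and using $\alpha_0 = b - \widetilde{\alpha}$ the preimages $\tilde E^{-1}(\tilde s_0 \tilde s_{\alpha_0 + x}) = x \otimes \alpha_0 = x \otimes b - x \otimes \widetilde{\alpha}$ contribute the classes $a \otimes b$ and $b \otimes b$ modulo the previously constructed sublattice. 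This brings the rank to $2n+2 = 2(n+1)$, which combined with the discreteness inherited from (f) yields that $T_R$ is a full lattice. The delicate point is ensuring that a simple root $\alpha$ with $(\alpha \mid \alpha_0) = -1$ is available, which is precisely why the restriction to the tubular types excluding $A_n^{(1)}$ matters.
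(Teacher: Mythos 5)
Your treatment of items (a)--(f) is essentially correct (note that the paper itself gives no proof of this lemma, only the citation to Sait\={o}), with the small caveat that injectivity in (a) requires reading the tensor product over $\RR$, or restricting to $L(\Phi)\otimes_{\ZZ}L(\Phi)/U$, which embeds into the real tensor product; your factorization through $\tilde{V}\otimes\tilde{V}^{*}$ is the right mechanism either way, since $U$ is exactly the radical of $(-\mid-)$ on $\tilde{V}$.

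Your proof of (g), however, has a genuine gap. The rank count rests on the identification $\tilde{\TR}_b(\lambda)=\tilde{E}(b\otimes\lambda)$, and this identity holds only after restriction to $V$, where $b$ lies in the radical. On $\tilde{V}$ the vector $b$ pairs nontrivially with $b'$, and a direct computation gives
\begin{equation*}
\tilde{s}_{\alpha}\tilde{s}_{\alpha+b}(v)=v+(b\mid v)\alpha-(\alpha\mid v)b-(b\mid v)b,
\qquad\text{i.e.}\qquad
\tilde{E}^{-1}(\tilde{s}_{\alpha}\tilde{s}_{\alpha+b})=b\otimes\bar{\alpha}+b\otimes\bar{b}-\alpha\otimes\bar{b}.
\end{equation*}
The summand $-\alpha\otimes\bar{b}$ lies in $V_{\fin}\otimes V/U$, not in $R\otimes V/U$, so $\tilde{\TR}_b(\lambda)$ contributes nothing to $T_R$; the same defect affects your element $\tilde{s}_0\tilde{s}_{\alpha_0+b}$. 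What survives of your construction is only the $a$-part, namely $a\otimes L(\Phi_{\fin})$ from $\tilde{N}(\Gamma_{\fin},a)$ together with $a\otimes b=\tilde{E}^{-1}(\tilde{c}^{\,m})$, i.e.\ the rank-$(n+1)$ lattice $a\otimes L(\Phi_b)$. Moreover the normal form of Corollary~\ref{Cor:NormalFormTildeW} shows that this cannot be fixed by choosing other elements of $\tilde{W}$: any $\tilde{w}$ with $\tilde{E}^{-1}(\tilde{w})\in R\otimes V/U$ acts trivially on $V/R$, hence equals $\tilde{\TR}_b(\lambda)\tilde{\TR}_a(\mu)\tilde{E}(a\otimes b)^{k}$, and the unavoidable $-\lambda\otimes\bar{b}$ component forces $\lambda=0$. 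So either (g) is to be read with $R$ replaced by the radical $U=\RR a$ of the form on $\tilde{V}$, in which case your $a$-part already exhibits a full lattice in $U\otimes V/U$, or it is the statement for $E$ on $V$ as in Lemma~\ref{EichlerV}, where $\TR_b(\lambda)=E(b\otimes\lambda)$ does hold and your lattice $\ZZ a\otimes L(\Phi_{\fin})+\ZZ b\otimes L(\Phi_{\fin})$ is genuinely full in $R\otimes V/R$. As written for $\tilde{E}$ on $\tilde{V}$, your argument does not establish (g), and you should confront this discrepancy rather than transport the $V$-computation to $\tilde{V}$ unchanged.
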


The next assertions can be checked by direct calculation with the Eichler map.

\begin{Lemma}\label{normalb}
The following holds.
\begin{itemize}
\item[(a)] $\tilde{E}(L(\Phi_b) \otimes b)$  is a normal subgroup of $\tilde{E}( L(\Phi_{b}) \otimes L(\Phi/U))$.
\item[(b)] $w \in \tilde{W}_{\fin}:= \langle \tilde{s}_i \mid 1 \leq i \leq n \rangle$ acts on $\tilde{E}(L(\Phi_{\fin}) \otimes b)$ by sending $\tilde{E}(\beta \otimes b)$ to $\tilde{E}(w^{-1}(\beta) \otimes b)$. 
\end{itemize}
\end{Lemma}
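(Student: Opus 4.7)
My approach for both parts is direct computation using the Eichler-Siegel formula, with the essential input being that $b$ lies in the radical $R$ of the form on $V$.

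First I would establish the following uniform observation: every $\varphi \in \tilde{E}(L(\Phi_b) \otimes L(\Phi/U))$, and in particular every $w \in \tilde{W}_{\fin}$, fixes $b$. Indeed, $\tilde{E}(\mu \otimes \gamma)(b) = b - (\gamma \mid b)\mu$, and the Eichler pairing $(\gamma \mid b)$ — computed via any lift of $\gamma \in V/U$ back to $V$ — vanishes because $b \in R$ and $V$ embeds isometrically into $\tilde{V}$. The same vanishing applied to pairs $\varphi_1, \varphi_2 \in L(\Phi_b) \otimes b$ kills the correction term $(\varphi_1 \mid \varphi_2)$ in the $\circ$-product, so by Lemma~\ref{Eichler} the restriction of $\circ$ to $L(\Phi_b) \otimes b$ coincides with ordinary addition and $\tilde{E}(L(\Phi_b) \otimes b)$ is a free abelian subgroup of $\tilde{W}$.

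Given that $\varphi(b) = b$ and that $\varphi$ is an isometry of $\tilde{V}$, a one-line calculation yields the conjugation identity
\[
\varphi\, T_b(\lambda)\, \varphi^{-1}(v) \;=\; v - \bigl(b \,\big|\, \varphi^{-1}(v)\bigr)\,\varphi(\lambda) \;=\; v - \bigl(\varphi(b) \,\big|\, v\bigr)\,\varphi(\lambda) \;=\; v - (b \mid v)\,\varphi(\lambda),
\]
that is, $\varphi T_b(\lambda) \varphi^{-1} = \tilde{E}(\varphi(\lambda) \otimes b)$. For part (a), it then remains to check that $\varphi(\lambda) \in L(\Phi_b)$ whenever $\varphi = \tilde{E}\bigl(\sum \mu_i \otimes \gamma_i\bigr)$ with $\mu_i \in L(\Phi_b)$ and $\lambda \in L(\Phi_b)$; this is immediate from $\varphi(\lambda) = \lambda - \sum (\gamma_i \mid \lambda)\mu_i$, since each $(\gamma_i \mid \lambda) \in \ZZ$ by the crystallographic property of $\Phi$. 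Part (b) is the same identity specialized to $\varphi = w^{-1}$ for $w \in \tilde{W}_{\fin}$, giving $w^{-1} T_b(\beta) w = T_b(w^{-1}(\beta)) = \tilde{E}(w^{-1}(\beta) \otimes b)$.

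The only non-routine point is bookkeeping for the Eichler pairing $(\gamma \mid v)$ between $V/U$ and $\tilde{V}$: this pairing is well-defined only because $U$ sits inside the radical of the form on $\tilde{V}$, which is guaranteed by the signature computation of Section~\ref{Sec:HyperbolicExtended}. Once this is understood, both claims collapse to the three-line calculation above, which is presumably what the authors mean by ``direct calculation with the Eichler map''.
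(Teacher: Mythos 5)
Your strategy is exactly what the paper intends: the authors offer nothing beyond ``can be checked by direct calculation with the Eichler map,'' and your computation is a correct instantiation of that for part (b). There $w\in\tilde{W}_{\fin}$ is a product of reflections, hence genuinely an isometry of $\tilde{V}$ fixing $b$, so the chain $\varphi\,T_b(\lambda)\,\varphi^{-1}(v)=v-(b\mid\varphi^{-1}(v))\varphi(\lambda)=v-(b\mid v)\varphi(\lambda)$ is valid and yields $w^{-1}\tilde{E}(\beta\otimes b)\,w=\tilde{E}(w^{-1}(\beta)\otimes b)$. Your two preliminary observations --- every element of the image fixes $b$ because $(\gamma\mid b)=0$ for $b\in R$, and $\circ$ restricts to $+$ on $L(\Phi_b)\otimes b$ because $(b\mid\mu)=0$ for $\mu\in L(\Phi_b)\subseteq V$ --- are also correct and are needed to make sense of the statement.

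The one step that does not survive scrutiny is the appeal, in part (a), to ``$\varphi$ is an isometry of $\tilde{V}$'' for an arbitrary $\varphi=\tilde{E}\bigl(\sum_i\mu_i\otimes\gamma_i\bigr)$ with $\mu_i\in L(\Phi_b)$ and $\gamma_i\in L(\Phi/U)$. Such a $\varphi$ need not preserve the form: for adjacent simple roots with $(\alpha_1\mid\alpha_2)=-1$, the map $\varphi=\tilde{E}(\alpha_1\otimes\alpha_2)$ sends $\alpha_1$ to $2\alpha_1$, so $(\varphi(\alpha_1)\mid\varphi(\alpha_1))=8\neq 2$; only the reflections $\tilde{E}(\alpha\otimes\alpha)$ and the transvections with radical first factor are isometries. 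Hence the middle equality $(b\mid\varphi^{-1}(v))=(\varphi(b)\mid v)$ is unjustified as written. The conclusion is nevertheless true for a more elementary reason, which you should substitute: $\varphi^{-1}(v)-v$ lies in the span of the $\mu_i$, hence in $V$, and $b\in R$ pairs trivially with all of $V$, so $(b\mid\varphi^{-1}(v))=(b\mid v)$ directly. (Equivalently, compute in the semigroup: $\psi\circ(\lambda\otimes b)=\psi+\varphi(\lambda)\otimes b=(\varphi(\lambda)\otimes b)\circ\psi$, using $(b\mid\mu_i)=0$ to kill one correction term and identifying the other with $(\lambda-\varphi(\lambda))\otimes b$.) With that repair --- and granting the paper's implicit convention that normality refers to conjugation by the invertible elements of $\tilde{E}(L(\Phi_b)\otimes L(\Phi/U))$, which is only a monoid since $\tilde{E}(\mu\otimes\gamma)$ is singular whenever $(\gamma\mid\mu)=1$ --- your argument for (a), including the integrality check that $\varphi(\lambda)\in L(\Phi_b)$, goes through.
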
 
\begin{proof}
    The proof of (a) and (b) is a direct calculation using Lemma~\ref{Eichler} (b) and (d).
\end{proof}

\begin{remark}
We are interested in the group $\tilde{W}$, which is generated by a subset of $\tilde{E}(L(\Phi) \otimes_{\ZZ} (L(\Phi) / U))$ by Lemma~\ref{Eichler} (g). Since we identify  $V/U$ with $V_b$ and $\Phi/U$ with $\Phi_b$, we will also consider $\tilde{E}$ restricted to $L(\Phi) \otimes  L(\Phi_{b})$. We also identify $V/R$ with $V_{\fin} \subset V_b$. This enables us to embed $V/R$ into $V/U$.
\end{remark}

\begin{Lemma} \label{EichlerV}
The following holds.
\begin{itemize}
\item[(a)] $(\tilde{s}_\alpha)_{\mid V} = s_\alpha$ for every $\alpha \in \Phi$, and $\varphi: \tilde{W} \rightarrow W$ with $w \mapsto w_{\mid V}$ is an epimorphism.
\item[(b)] The Eichler-Siegel map on $V$ is given by $\displaystyle E:  V \otimes V/R  \rightarrow \END (V),~x \mapsto \tilde{E}(x)_{\mid V}.$
\item[(c)]  All statements of  \ref{Eichler} hold in adapted form  for $E$, as well.
\end{itemize} 
\end{Lemma}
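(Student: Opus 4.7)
The plan is to reduce each part to the corresponding statement for $\tilde{E}$ already established in Lemma~\ref{Eichler}. For (a), one verifies directly that for $v \in V$ and $\alpha \in \Phi \subseteq V$, the formula $\tilde{s}_\alpha(v) = v - \frac{2(\alpha \mid v)}{(\alpha \mid \alpha)} \alpha$ produces an element of $V$ and is identical to $s_\alpha(v)$, since the bilinear form on $\tilde{V}$ restricts to the form on $V$. For (b), two observations are needed: (i) whenever $\varphi = \sum_i f_i \otimes g_i$ has all $f_i \in V$, the endomorphism $\tilde{E}(\varphi)$ stabilises $V$, because $\tilde{E}(\varphi)(v) = v - \sum_i (g_i \mid v) f_i \in V$; and (ii) since $R$ is the radical of $(- \mid -)$ restricted to $V$ (Lemma~\ref{Lem:QuotientsEllipticRootS}(b)), the pairing $(g \mid v)$ for $v \in V$ depends on $g$ only modulo $R$. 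Together these show that $\tilde{E}$ restricted to $V \otimes V/U$ factors through the canonical projection $V \otimes V/U \twoheadrightarrow V \otimes V/R$, yielding the claimed map $E$.

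For (c), I would transport each of the assertions (a)--(g) of Lemma~\ref{Eichler} through the commutative square produced in (b). Injectivity of $E$ follows from non-degeneracy of the induced form on $V/R$ (Lemma~\ref{Lem:QuotientsEllipticRootS}(b)): if $E(\sum f_i \otimes g_i) = \idop$ with the $f_i$ linearly independent in $V$, then $(g_i \mid v) = 0$ for all $v \in V$ forces $g_i \in R$, i.e.\ $g_i = 0$ in $V/R$. The semigroup identity $E(\varphi \circ \psi) = E(\varphi) E(\psi)$, the closedness of $R \otimes V/R$ under $\circ$ with $\circ = +$ there, the product rule in (d), and $s_\alpha = E(\alpha \otimes \alpha)$ all follow formally from the corresponding statements for $\tilde{E}$, using that the projection $V/U \twoheadrightarrow V/R$ is compatible with $\circ$ and that the pairing terms vanish on $R$. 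Well-definedness of $E^{-1}$ on $W$ and the inclusion $E^{-1}(W) \subseteq L(\Phi) \otimes_{\ZZ} (L(\Phi)/R)$ follow from the corresponding statements for $\tilde{W}$ together with (a), which guarantees that every $w \in W$ is the restriction of some $\tilde{w} \in \tilde{W}$.

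The main obstacle I anticipate is the lattice statement, namely that $T_R := E^{-1}(W) \cap (R \otimes V/R)$ is a \emph{full} lattice in $R \otimes V/R$, since merely projecting the full lattice $\tilde{E}^{-1}(\tilde{W}) \cap (R \otimes V/U)$ along the surjection $V \otimes V/U \twoheadrightarrow V \otimes V/R$ does not automatically preserve full rank. The cleanest remedy is to use the explicit translations $\TR_a(\lambda), \TR_b(\lambda)$ from Section~\ref{CoxSubgroupWb}: under $E^{-1}$ these correspond to $a \otimes \lambda$ and $b \otimes \lambda$ for $\lambda \in L(\Phi_{\fin})$, and since $\{a,b\}$ is a $\ZZ$-basis of $L(\Phi) \cap R$ (Lemma~\ref{RadicalLattice}) and $L(\Phi_{\fin})$ is a full lattice in $V/R$, this exhibits a full sublattice of $R \otimes V/R$ inside $T_R$, forcing $T_R$ to be full.
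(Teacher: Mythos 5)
The paper states Lemma~\ref{EichlerV} without any proof at all --- it is presented as a routine consequence of the definitions and of Lemma~\ref{Eichler}, so there is no argument of the authors to compare yours against. Your write-up correctly supplies the omitted details: the restriction argument for (a), the observation that $(g\mid v)$ for $v\in V$ depends on $g$ only modulo $R$ (so that $\tilde{E}$ restricted to $V\otimes V/U$ and then to $V$ factors through $V\otimes V/R$) for (b), and the formal transport of the properties in Lemma~\ref{Eichler} for (c). You are also right to single out the fullness of $T_R$ as the one point where naive transport fails, since the image of a full lattice under the projection $V\otimes V/U\twoheadrightarrow V\otimes V/R$ need not remain a lattice; your fix via the explicit transvections $\TR_a(\lambda),\TR_b(\lambda)$, together with Lemma~\ref{RadicalLattice} and Proposition~\ref{RootSystem}, exhibits a full sublattice of $T_R$, and discreteness of $T_R$ is guaranteed by the adapted statement (f) (i.e.\ $E^{-1}(W)\subseteq L(\Phi)\otimes_{\ZZ}(L(\Phi)/R)$, a lattice since $L(\Phi)=L(\Phi_{\fin})\oplus\ZZ a\oplus\ZZ b$ is full in $V$). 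The proof is correct.
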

\begin{proof}
 Assertion (a) is a direct consequence of the definition of   $\tilde{s}_\alpha$, and  of the fact that $\tilde{s}_\alpha(V) \subseteq V$. The map introduced in (b) is the analog of the Eichler map for $\tilde{V}$ for $V$ (see also \cite[Section (1.14)]{Sai85}). 

Finally, we prove (c). Let $\varphi \in V \otimes V/R$. Then it is $\tilde{E}(\varphi) (b') = b'$ for all $v \in V$. This shows that the injectivity of $\tilde{E}$ also implies the injectivity of $E$. The analogs of (b)-(g) of Lemma~\ref{Eichler} for $E$ hold as $E(\varphi)$ is the restriction of $\tilde{E}(\varphi )$ to $V$.
\end{proof}

\medskip
\noindent Finally, we introduce the Coxeter transformation 
$$\tilde{c} = \tilde{s}_1 \cdots  \tilde{s}_{t-1}  \tilde{s}_{t+1} \cdots  \tilde{s}_n   \tilde{s}_0 \tilde{s}_t  \tilde{s}_{t^\star} \in \tilde{W},$$ 
which will be properly studied in Section~\ref{CoxeterTransMulti}.

\begin{Lemma} \label{center} 
The following holds.
\begin{enumerate}
\item[(a)] $[\tilde{W},  \tilde{E}((\ZZ a + \ZZ b)\otimes b) ] = 1$; 
\item[(b)]  $\tilde{c}^m = \tilde{E}(a \otimes b)$, where $m$ is the order of $c$ in $W$;
\item[(c)] $[\tilde{V}, \tilde{W}] \leq V$ and $\tilde{W}_{\mid R} = 
\langle \idop_R \rangle$.
\end{enumerate}
\end{Lemma}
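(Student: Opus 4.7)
The plan is to treat the three assertions in the order (c), (a), (b), since the later parts depend on the earlier.

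For (c), each generator $\tilde{s}_\alpha$ of $\tilde{W}$ acts on $\tilde{V}$ by $\tilde{s}_\alpha(\tilde{v})=\tilde{v}-(\alpha\mid\tilde{v})\alpha$ with $\alpha\in V$, so $(\tilde{s}_\alpha-\idop)(\tilde{V})\subseteq\RR\alpha\subseteq V$ and $\tilde{s}_\alpha$ preserves $V$. An easy induction on the word length of $\tilde{w}\in\tilde{W}$ then gives $(\tilde{w}-\idop)(\tilde{V})\subseteq V$, which is the first claim. For the second, every root $\alpha$ lies in $V$ and $R$ is the radical of $(-\mid-)$ on $V$, so $(\alpha\mid r)=0$ for every $r\in R$; therefore each generator of $\tilde{W}$ fixes $R$ pointwise.

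For (a), by the semi-group property of $\tilde{E}$ (Lemma~\ref{Eichler}(b)) and its injectivity, it suffices to check that $x\otimes b$ commutes with $\tilde{E}^{-1}(\tilde{w})$ under the operation $\circ$ for $x\in\{a,b\}$ and $\tilde{w}\in\tilde{W}$. Writing $\tilde{E}^{-1}(\tilde{w})=\sum_i f_i\otimes g_i$ with $f_i\in V$ and $g_i\in V/U$ (Lemma~\ref{Eichler}(f)), the commutator $\varphi_1\circ\varphi_2-\varphi_2\circ\varphi_1=(\varphi_2\mid\varphi_1)-(\varphi_1\mid\varphi_2)$ reduces to the two pairings $(g_i\mid x)$ and $(b\mid f_i)$. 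Both vanish: $b\in R$ is orthogonal to $V$, and since $a,b\in R$ their images modulo $U$ lie in the radical of the induced form on $V/U$.

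For (b), combining (c) with $\tilde{s}_\alpha|_V=s_\alpha$ (Lemma~\ref{EichlerV}(a)) gives $\tilde{c}|_V=c$, so $\tilde{c}^m|_V=\idop_V$ because $c$ has order $m$ in $W$. Consequently $\tilde{E}^{-1}(\tilde{c}^m)$ has trivial image in $V\otimes V/R$ and therefore lies in $V\otimes R/U=V\otimes\RR\bar{b}$, so $\tilde{E}^{-1}(\tilde{c}^m)=v_0\otimes\bar{b}$ for some $v_0\in V$, and Lemma~\ref{Eichler}(f) forces $v_0\in L(\Phi)$. Using next that $\tilde{c}^m$ is an isometry of $\tilde{V}$, the identity $\tilde{c}^m(b')=b'-v_0$ together with preservation of the pairings $(b'\mid v_3')$ for $v_3'\in V_3$ and of $(b'\mid b')=0$ forces, along the decomposition $\tilde{V}=\langle a\rangle\bot V_2\bot V_3$, the $V_3$- and $b$-components of $v_0$ to vanish. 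Hence $v_0=\lambda a$ with $\lambda\in\ZZ$.

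\textbf{The main obstacle} is to show $\lambda=1$. My plan is to exploit the structural decomposition $c=\pi\cdot s_ts_{t^*}$ combined with a short Eichler--Siegel calculation: expanding $(\alpha_t\otimes\alpha_t)\circ((\alpha_t+a)\otimes(\alpha_t+a))$ and using $\alpha_{t^*}=\alpha_t+a$, $a\in R$, $U=\RR a$ collapses to $\tilde{s}_t\tilde{s}_{t^*}=\tilde{E}(a\otimes\alpha_t)$. Writing $\tilde{c}=\tilde{c}_b\cdot\tilde{s}_{t^*}$, where $\tilde{c}_b$ is the hyperbolic lift of an affine Coxeter element of $W_b$, and expanding $\tilde{c}^m=(\tilde{c}_b\tilde{s}_{t^*})^m$ via the standard identity $(xy)^m=x^m\prod_{k=0}^{m-1}x^{-k}yx^k$ puts the $a$-translation contributions into conjugates of the form $\tilde{c}_b^{-k}\tilde{E}(a\otimes\alpha_t)\tilde{c}_b^{k}=\tilde{E}(a\otimes c_b^{-k}(\alpha_t))$ (using $\tilde{W}$-invariance of $a$ from (c)). After reassembly, the computation of $\tilde{c}^m$ should reduce to identifying the orbit sum $\sum_{k=0}^{m-1}c_b^{-k}(\alpha_t)$ modulo $R$ with $b$, which follows from the orbit structure of the Coxeter element on the root lattice together with the relation $b=\alpha_0+\sum_i m_i\alpha_i$. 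Should the general orbit-sum calculation prove delicate, a case-by-case verification over the four tubular types of Lemma~\ref{Lem:TubEllRoot} suffices.
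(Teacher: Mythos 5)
Your arguments for (c) and (a) are correct and complete, and the first half of your argument for (b) is a nice soft reduction: from $\tilde{c}^m|_V=\idop_V$, the factorization of $V\otimes V/U\to\END(V)$ through $V\otimes V/R$ (injective there since the induced pairing of $V/R$ against $V$ is nondegenerate in the first variable), the isometry constraints on $\tilde{c}^m(b')$, and the integrality from Lemma~\ref{Eichler}(f), you correctly obtain $\tilde{c}^m=\tilde{E}(\lambda a\otimes b)$ with $\lambda\in\ZZ$. All of this is consistent with the paper's (unwritten) ``direct calculation with the Eichler map.''

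The genuine gap is the step you yourself flag: $\lambda=1$ is never established, and that is precisely the content of assertion (b) -- it is what distinguishes $\tilde{c}^m$ from the identity or from a proper power of the central generator, and Proposition~\ref{StructureTildeW}(a) (non-splitness) and Lemma~\ref{Lem:lengthtildeW-W} depend on getting this exactly right. Your sketch has two concrete problems beyond being unexecuted. First, in $(xy)^m=x^m\prod_k x^{-k}yx^k$ with $x=\tilde{c}_b$ a lift of a \emph{full} affine Coxeter element of $W_b$, the factor $x^m$ is not trivial (affine Coxeter elements have infinite order), so the ``reassembly'' does not reduce to the orbit sum alone; you would need to instead split off $z_t:=\tilde{s}_t\tilde{s}_{t^\star}=\tilde{E}(a\otimes\alpha_t)$ against the parabolic factor $d':=\tilde{s}_1\cdots\widehat{\tilde{s}_t}\cdots\tilde{s}_n\tilde{s}_0$ (so $\tilde{c}=d'z_t$ with $z_t\in\tilde{N}(\Gamma_b,a)$ normal abelian) and additionally prove $(d')^m=1$ in $\tilde{W}_b$, which itself requires identifying $m$ with the order of $s_1\cdots\widehat{s_t}\cdots s_ns_{\widetilde{\alpha}}$ in $W_{\fin}$ and checking the vanishing of its $b$-translation part. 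Second, the target identity is that $\sum_{k=0}^{m-1}\overline{d'}^{-k}(\overline{\alpha_t})$ equals $\overline{b}$ \emph{exactly} in $V/U$; computing it ``modulo $R$'' as you propose discards precisely the coefficient of $b$ that determines $\lambda$ (indeed modulo $R$ the sum is forced to vanish by what you already proved). The orbit-sum evaluation does come out to $\overline{b}$ (e.g.\ in type $D_4^{(1,1)}$ one checks $\overline{\alpha_2}+\overline{d'}^{-1}(\overline{\alpha_2})=\overline{b}$ directly), so your plan and your case-by-case fallback are viable, but as written the decisive computation is asserted rather than performed.
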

\begin{proof}
Statements (a) and (b) follow from direct calculation with the Eichler map, while (c) is a consequence of the fact that $\tilde{W}$ is generated by the reflections $\tilde{s}_\alpha$, where $\alpha \in \Phi \subset V$, and that $\tilde{s}_\alpha(b^\prime)$ is in $V$
for every $\alpha \in \Phi$.
\end{proof}

\begin{Proposition}\label{StructureTildeW}
The following holds.
\begin{itemize}
	\item[(a)] The group $\tilde{W}$ is a non-split central extension of $W$ by $\langle 	\tilde{E}(a \otimes b) \rangle  = \tilde{E}(a \otimes \ZZ b)  \cong \ZZ$.
	\item[(b)] The groups $\tilde{W}$ and $\hat{W}$  are isomorphic.
	\end{itemize} 
\end{Proposition}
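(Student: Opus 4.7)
For part (a), the plan is to construct the natural projection $\pi\colon \tilde W \to W$ by restriction to $V$ and analyse its kernel. By Lemma \ref{center}(c), every $\tilde w \in \tilde W$ preserves $V$ (since $\tilde w(v)-v \in [\tilde V, \tilde W] \leq V$), and by Lemma \ref{EichlerV}(a) the restriction sends $\tilde s_\alpha \mapsto s_\alpha$, so $\pi$ is a well-defined surjection. Writing $\tilde w = \tilde E(\varphi)$ with $\varphi \in L(\Phi) \otimes L(\Phi)/U$ (Lemma \ref{Eichler}(f)), the condition $\tilde E(\varphi)|_V = \idop_V$ is the vanishing on $V$ of the pairing map $v \mapsto \sum_i (g_i \mid v) f_i$; since the left kernel of the induced pairing $V/U \times V \to \RR$ is $R/U$, tensoring with the flat $\tilde V$ forces $\varphi \in \tilde V \otimes (R/U)$. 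Intersecting with the integral lattice and using $L(\Phi)/U \cap R/U = \ZZ\cdot(b+U)$ (Lemma \ref{RadicalLattice}), kernel elements have the form $\tilde E(\lambda \otimes b)$ with $\lambda \in L(\Phi)$. A direct Eichler-map computation gives $\tilde s_i\,\tilde E(\lambda \otimes b)\,\tilde s_i^{-1} = \tilde E(s_i(\lambda) \otimes b)$ (using $\tilde s_i(b)=b$), so centrality (Lemma \ref{center}(a)) forces $s_i(\lambda) = \lambda$, hence $\lambda \in V^W \cap L(\Phi) = R \cap L(\Phi) = \ZZ a \oplus \ZZ b$ (since $W_{\fin}$ has no non-zero fixed vector on $V/R$). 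To eliminate the $b$-component I would use the part-(b) isomorphism below to transport the question to $\hat W$ and apply the same form-preservation argument on the non-degenerate $\hat V$, which cuts the kernel down to a rank-one sublattice containing $\tilde c^m = \tilde E(a \otimes b)$ (Lemma \ref{center}(b)). Centrality is then Lemma \ref{center}(a), and infinite cyclicity follows from injectivity of $\tilde E$ (Lemma \ref{Eichler}(a)). Non-splitting: any section $s\colon W \to \tilde W$ would give $s(c)^m = 1$, and writing $\tilde c = s(c)\cdot z$ with central $z = \tilde E(a \otimes b)^k \in \ker\pi$ yields $\tilde c^m = z^m = \tilde E(a \otimes b)^{mk}$, forcing $mk=1$ against Lemma \ref{center}(b).

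For part (b), the plan is to exhibit $\tilde V$ as a subspace of $\hat V$ and show that the restriction $\rho\colon \hat W \to \tilde W$, $\hat w \mapsto \hat w|_{\tilde V}$, is an isomorphism. By the Lemma preceding the covers subsection, $\tilde V = \langle a\rangle \bot V_2 \bot V_3$ sits in $\hat V = V_1 \bot V_2 \bot V_3$ (with $V_1 = \langle a, v_2\rangle$ hyperbolic and $(a \mid v_2) = 1$) as a codimension-one subspace with complement $\langle v_2\rangle$. For $\alpha \in \Phi \subseteq V \subseteq \tilde V$, the formula $\hat s_\alpha(v) = v - (\alpha \mid v)\alpha$ shows $\hat s_\alpha$ preserves $\tilde V$ and restricts there to $\tilde s_\alpha$, so $\rho$ is a surjective group homomorphism. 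For injectivity, take $\hat w \in \ker\rho$; since $\hat w$ is an isometry fixing $\tilde V$ pointwise, $\hat w(v_2) - v_2 \in \tilde V^\bot$. A dimension count gives $\dim \tilde V^\bot = \dim\hat V - \dim\tilde V = 1$, and since $a \in \tilde V^\bot$ by the orthogonal decomposition, $\tilde V^\bot = \langle a\rangle$. Thus $\hat w(v_2) = v_2 + ta$ for some $t \in \RR$, and the isotropy identity $0 = (\hat w(v_2) \mid \hat w(v_2)) - (v_2 \mid v_2) = 2t(a\mid v_2) + t^2(a\mid a) = 2t$ forces $t = 0$, hence $\hat w = \idop$.

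The main obstacle is the $b$-direction refinement in the kernel computation for (a): $W$-invariance on $V$ alone does not distinguish $a$ from $b$, and a word-level cocycle analysis in $\tilde W$ is awkward because the $b$-coefficient relative to $V = V_{\fin} \oplus \RR a \oplus \RR b$ is not a $W$-invariant linear functional on $V$ (the subspace $V_{\fin}$ is not $W$-stable). The cleanest resolution is to establish (b) first and then exploit the non-degenerate form on $\hat V$ to identify the kernel as a rank-one lattice, using the very form-preservation argument that proves injectivity of $\rho$; all remaining steps are routine Eichler--Siegel manipulation or isometry bookkeeping.
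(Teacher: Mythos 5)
Your proposal is correct in its final form and, at bottom, takes the same route as the paper: the paper proves (a) by observing that an element of $\tilde{W}$ acting trivially on $V$ satisfies $A^{t}JA=J$ for the Gram matrix $J$ of $(-\mid-)$ on $\tilde{V}$ and moves vectors only inside $V$ (Lemma \ref{center}(c)), which pins the kernel of the restriction map down to $\langle \tilde{E}(a\otimes b)\rangle$; centrality is then Lemma \ref{center}(a) and (b) is the faithfulness of the $\hat{W}$-action on $\tilde{V}$, exactly as you argue. Two remarks. First, your initial kernel computation contains a genuinely circular step: Lemma \ref{center}(a) only asserts that the \emph{specific} elements $\tilde{E}(\mu\otimes b)$ with $\mu\in\ZZ a+\ZZ b$ are central; it does not say that $\ker\pi$ is central, which is part of what Proposition \ref{StructureTildeW}(a) claims. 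Normality of $\ker\pi$ only gives $\tilde{E}(s_i(\lambda)\otimes b)\in\ker\pi$, not $s_i(\lambda)=\lambda$, so that route does not close. You correctly sense this and discard it. Second, the detour through $\hat{V}$ in your ``cleanest resolution'' is sound but unnecessary: the isometry argument already works on $\tilde{V}$ itself, because the form there has radical exactly $\RR a$ and pairs $b$ nontrivially with $b'$. Concretely, if $\tilde{w}|_V=\idop_V$ then $\tilde{w}(b')-b'\perp V$ forces $\tilde{w}(b')-b'=\mu a+\nu b\in R$, and $(\tilde{w}(b')\mid\tilde{w}(b'))=(b'\mid b')$ gives $2\nu(b\mid b')=0$, hence $\nu=0$; integrality (Lemma \ref{Eichler}(f),(g)) then yields $\ker\pi=\langle\tilde{E}(a\otimes b)\rangle$ with no appeal to part (b). This is precisely the content of the paper's one-line Gram-matrix argument. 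Your non-splitting argument (a section would force $s(c)^m=1$, while every preimage of $c$ is $\tilde{c}z^{k}$ with $\tilde{c}^{m}=\tilde{E}(a\otimes b)$ of infinite order) is correct and is in fact more explicit than the paper's rather terse remark; your treatment of (b) coincides with the paper's, merely spelled out.
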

\begin{proof}
Let $J$ be the Gram matrix of the bilinear form $(-\mid -)$ on $\tilde{V}$ with respect to a basis $B$, and let $A$ be the matrix with respect to $B$ of an element in $\tilde{W}$, which acts trivially on $V$. As $(Av_1 \mid Av_2) = (v_1,v_2)$ for all $v_1,v_2 \in \tilde{V}$, it is $A^t JA = J$. Taking into account Lemma~\ref{center} (c) we derive that $A$ is the matrix of an element in $\langle \tilde{E}(a \otimes b) \rangle$. We know by Lemma~\ref{center} (b) that $\langle \tilde{E}(a \otimes b) \rangle \leq \tilde{W}$. Therefore, we conclude with Lemma~\ref{center} (a) and Lemma~\ref{EichlerV} (a) that $\tilde{W}$ is the central extension given in (a).

As $\tilde{W}$ is generated by elements of order $2$, a splitting $\tilde{W} = \langle \tilde{E}(a \otimes b) \rangle  \rtimes Y (= \langle \tilde{E}(a \otimes b) \rangle  \times Y $) 
for some $Y < \tilde{W}$, would imply the contradiction that
$Y = \tilde{W}$. Hence $\tilde{W}$  is a non-split extension.
	
Considering the Gram matrix for the form $(-\mid -)$ on $\hat{V}$ we calculate that $\hat{W}$ acts faithfully on $\tilde{V}$. This shows that the homomorphism from $\hat{W}$ to $\tilde{W}$ that sends $w \in  \hat{W}$ to $w\mid_{\tilde{V}} \in \tilde{W}$ is an isomorphism, as stated in (b).
\end{proof}

Let $W$ and $W'$ be two groups that are generated by $S$ and $S'$, respectively.
We say that $(W,S)$ and $(W',S')$ are \defn{isomorphic}, if there is an isomorphism from
$W$ to $W'$ that maps $S$ bijectively onto $S'$.

\begin{corollary}
 The hyperbolic covers $(\tilde{W},\tilde{S})$ and $ (\hat{W}, \hat{S})$ are isomorphic. 
\end{corollary}

\begin{remark}\label{Rem:HomotildeWtoW}
We want to highlight  that the map $\varphi: w \mapsto w\mid_{V}$ is an epimorphism from $\tilde{W}$ to $W$, whose kernel is $\tilde{E}(a \otimes \ZZ b)$ by Proposition~\ref{StructureTildeW}.
\end{remark}

\subsection{The Coxeter subgroups $\tilde{W}_b$ and $\tilde{W}_a$ of $\tilde{W}$}
In this section we discuss a preimage of the affine Coxeter system $(W_b, S_b)$ in $\tilde{W}$.
We will  abbreviate $\tilde{s}_{\alpha_i}$ by  $\tilde{s}_i$.

%$$\tilde{W}_b :=  \langle \tilde{S}_b \rangle   ~\mbox{where}~ 
%\tilde{S}_b  = \{\tilde{s}_i \mid 0 \leq i \leq n \} ~\mbox{and }~  \tilde{s}_i =  \tilde{s}_{\alpha_i}. $$
%Recall that  $W_b$  is an affine Coxeter group of type $\Gamma_b = \Gamma_{\aff}$ (see Section~\ref{CoxSubgroupWb}). Next we will show that $W_b$ and $\tilde{W}_b$ are isomorphic groups.

\begin{Lemma}\label{lem:IsoAffUG}
Set $\tilde{S}_b  = \{\tilde{s}_i \mid 0 \leq i \leq n \}$ 
 and 
$\tilde{W}_b :=  \langle \tilde{S}_b \rangle $. Then 
$(\tilde{W}_b, \tilde{S}_b)$ is a Coxeter system, which is isomorphic to the affine Coxeter system $(W_b, S_b)$. 
An isomorphism is given by $\varphi\mid_{\tilde{W}_b}$, where $\varphi: \tilde{W} \rightarrow W$ is the restriction map $w \mapsto w_{\mid V}$.

\end{Lemma}
\begin{proof} 
Lemma~\ref{Eichler} (a) yields
$$\tilde{W}_b \cap \langle \tilde{E}(a \otimes b) \rangle  \leq \tilde{E}(V_b \otimes V/U) \cap \langle \tilde{E}(a \otimes b) \rangle   = \tilde{E}((V_b \otimes V/U) \cap (a \otimes \ZZ b))= \tilde{E}( 0 \otimes b) = 1.$$
Hence,
$\tilde{W}_b$ acts faithfully on $V_b = V_{\aff} \subseteq \tilde{V}$ by Proposition~\ref{StructureTildeW}.
Therefore, the map $\varphi\mid_{\tilde{W}_b}$ defines an isomorphism  from $\tilde{W}_b$  to $W_b$. 
%by Remark~\ref{Rem:HomotildeWtoW}. 
\end{proof}

\begin{remark}\label{tildeWaCoxeter}
	Let  $\tilde{W}_a:= \langle \tilde{s}_i , \tilde{s}_{t^\star}  \mid 1 \leq i \leq n \rangle  \leq \tilde{W}$. Then $\tilde{W}_a$  acts faithfully on $V_a = \langle \alpha_i, \alpha_t^\star \mid 1 \leq i \leq n \rangle \subseteq \tilde{V}$ and is therefore also an affine Coxeter group of type $\Gamma_b$ (see Remark~\ref{WaCoxeter}), thus $\tilde{W}_a\cong  W_a \cong W_b$.
\end{remark}

\begin{Definition}
 Let $x \in \{a,b\}$.
 \begin{itemize}
     \item[(a)] For $\alpha \in \Phi_{\fin} $, $k \in \ZZ$, set 
$\tilde{\TR}_x(k\alpha): = \tilde{s}_\alpha \tilde{s}_{\alpha +kx}$ and 
$ \tilde{N}(\Gamma_{\fin},x) := \langle \tilde{\TR}_x(\alpha_i)  \mid 1 \leq i \leq n \rangle  \subseteq \tilde{W}$.
      \item[(b)] For $k_i \in \ZZ$ and  $\lambda = \sum_{i = 1}^n k_i\alpha_i  \in L(\Phi_{\fin})$,
set  $\tilde{\TR}_x(\lambda) := \prod_{i = 1}^n  \tilde{\TR}_x(k_i\alpha_i)$.
 \end{itemize}
\end{Definition}	

\begin{remark}
    Notice that $\tilde{\TR}_b(k\alpha)$ is no longer a transvection.
\end{remark}

\begin{Lemma}\label{lem:Translationgroups}
Let $x \in \{a,b\}$.  
\begin{itemize}
\item[(a)] It is $\tilde{N}(\Gamma_{\fin},x) 
= \{\tilde{\TR}_x(\lambda) \mid \lambda \in L(\Phi_{\fin})  \}$
free abelian of rank $n$;
\item[(b)] $\tilde{W}_b = \tilde{W}_{\fin} \ltimes   \tilde{N}(\Gamma_{\fin},b)$, where
$\tilde{W}_{\fin}:= \langle \tilde{s}_i \mid 1 \leq i  \leq n\rangle$;
\item[(c)] $\varphi(\tilde{N}(\Gamma_{\fin},x)) = N(\Gamma_{\fin},x) $ and 
 $\varphi (\tilde{W}_{\fin}) = W_{\fin}$.
\end{itemize}
\end{Lemma}
\begin{proof}
Statement (a) follows from $\tilde{\TR}_x(\lambda) \tilde{\TR}_x(\mu) = \tilde{\TR}_x(\mu) \tilde{\TR}_x(\lambda)=
\tilde{\TR}_x(\lambda+ \mu)$, and (b) and (c) are a direct consequence of Lemma~\ref{lem:IsoAffUG} and  Remark~\ref{tildeWaCoxeter}.
\end{proof}

\subsection{More on the  structure of $\tilde{W}$}\label{sec:StructurehyperbolicCover}
We further explore the structure of $\tilde{W}$ and show that $\tilde{W}$ is not a Coxeter group.
In the next subsection we will conclude that $(\tilde{W}, \tilde{S})$ is isomorphic to an extended Coxeter system (see Section~\ref{Sec:ExtCox}).

For $k \in \ZZ$ set 
$$\tilde{\TR}_a(kb) := \tilde{E}( a \otimes k b) ~\mbox{and}~ \tilde{\TR}_a(k \alpha_0) := \tilde{\TR}_a(-k \tilde{\alpha}) \tilde{\TR}_a(kb),$$ and for $\mu := \sum_{i = 0}^n k_i\alpha_i   \in L(\Phi_b)$ where $ k_i \in \ZZ$ define
$$ \tilde{\TR}_a(\mu) := \prod_{i = 0}^n  \tilde{\TR}_a(k_i\alpha_i).$$

\begin{Lemma}\label{Lem:AffineGItter}
The following holds.
\begin{itemize}
\item[(a)] 
$\tilde{N}(\Gamma_{b},a) := \langle  \tilde{\TR}_a(\alpha_i) \mid 0 \leq i \leq n \rangle =	 \{\tilde{\TR}_x(\lambda) \mid \lambda \in L(\Phi_{b})  \}$ is a normal subgroup of $\tilde{W}$.
\item[(b)] $\tilde{N}(\Gamma_{b},a) = \tilde{N}(\Gamma_{\fin},a) \times  \langle \tilde{\TR}_a(b) \rangle$ is free abelian  of rank $n+1$. 
\item[(c)] $\tilde{N}(\Gamma_{b},a) \cong L(\Phi_b)$ as abelian groups.
\end{itemize}
\end{Lemma}

Now we can depict the structure of $\tilde{W}$.

\begin{Proposition}\label{ExactStructure}
The following holds.
\begin{itemize}
\item[(a)]  $\displaystyle \tilde{W} = \tilde{W}_b \ltimes   \tilde{N}(\Gamma_{b},a)  \cong W_b  \ltimes   L(\Phi_b)$
 and the following short exact sequence splits. 
$$1 \rightarrow  \tilde{N}(\Gamma_{b},a) \rightarrow \tilde{W} \rightarrow W_b \rightarrow 1$$
\item[(b)] 	Let $w \in \tilde{W}_{\fin}$  and $\tilde{\TR}_b(\lambda) \in \tilde{N}(\Gamma_{\fin},b)$. Then   the action of $w$ and  $\tilde{\TR}_b(\lambda)$ on $\tilde{N}(\Gamma_{b},a) $ is given, respectively, by
$$w^{-1} \tilde{\TR}_a(\mu) w = \tilde{\TR}_a(w^{-1}(\mu) )~\mbox{and}~ 
\tilde{\TR}_b(\lambda)^{-1} \tilde{\TR}_a(\mu) \tilde{\TR}_b(\lambda)
=  \tilde{\TR}_a (\tilde{\TR}_b(-\lambda) (\mu)).$$
\end{itemize}
In particular, in (a) the action of $W_b$ on $L(\Phi_b)$ is the natural action
of the Coxeter group $W_b$ on its root lattice $L(\Phi_b)$.
\end{Proposition}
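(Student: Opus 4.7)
The plan is to leverage the Eichler--Siegel map so that the whole semidirect-product structure reduces to an explicit calculation in $\tilde V \otimes_\ZZ V/U$, after which the remaining bookkeeping is routine.

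First I would rewrite the translation generators via $\tilde{E}$. A short semigroup computation, using Lemma~\ref{Eichler}(c) (which says $\circ$ and $+$ agree on $R \otimes V/U$), yields
\[
\tilde{\TR}_a(\mu) = \tilde{E}(a \otimes \mu) \quad \text{for all } \mu \in L(\Phi_b),
\]
and similarly $\tilde{\TR}_b(\lambda) = \tilde{E}(b \otimes \lambda)$ for $\lambda \in L(\Phi_{\fin})$. From the defining formula $\tilde{E}(v \otimes v')(x) = x - (v' \mid x) v$ and the isometry property of each $\tilde s_\alpha$, a one-line check gives the conjugation identity
\[
\tilde{s}_\alpha \tilde{E}(v \otimes v') \tilde{s}_\alpha = \tilde{E}\bigl(\tilde{s}_\alpha(v) \otimes \tilde{s}_\alpha(v')\bigr),
\]
which by induction extends to $w^{-1} \tilde{E}(v \otimes v') w = \tilde{E}\bigl(w^{-1}(v) \otimes w^{-1}(v')\bigr)$ for every $w \in \tilde W$.

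Part (b) is then immediate. By Lemma~\ref{center}(c) every $w \in \tilde W$ fixes the radical $R$ pointwise, so
\[
w^{-1} \tilde{\TR}_a(\mu) w = \tilde{E}\bigl(a \otimes w^{-1}(\mu)\bigr) = \tilde{\TR}_a\bigl(w^{-1}(\mu)\bigr);
\]
specialising $w$ to the image $e(w_{\fin})$ (Remark~\ref{rem:Translationb}) recovers the first displayed formula of~(b), while specialising $w$ to $\tilde{\TR}_b(\lambda)$ and using $\tilde{\TR}_b(\lambda)^{-1}(\mu) = \mu + (\mu \mid \lambda) b = \tilde{\TR}_b(-\lambda)(\mu)$ recovers the second.

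For (a) the normality of $\tilde{N}(\Gamma_b, a)$ in $\tilde W$ and its rank are already supplied, so I need only verify $\tilde W = \tilde W_b \cdot \tilde{N}(\Gamma_b, a)$ and $\tilde W_b \cap \tilde{N}(\Gamma_b, a) = \{1\}$. The first holds because the only element of $\tilde S$ not lying in $\tilde W_b$, namely $\tilde{s}_{t^\star} = \tilde{s}_{\alpha_t + a}$, factors as
\[
\tilde{s}_{t^\star} = \tilde{s}_{\alpha_t} \cdot \bigl(\tilde{s}_{\alpha_t} \tilde{s}_{\alpha_t + a}\bigr) = \tilde{s}_t \cdot \tilde{\TR}_a(\alpha_t),
\]
after which normality forces $\tilde W_b \cdot \tilde{N}(\Gamma_b, a)$ to be a subgroup containing all generators of $\tilde W$. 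The trivial-intersection claim is the main obstacle: any $x \in \tilde W_b \cap \tilde{N}(\Gamma_b, a)$ has the form $x = \tilde{E}(a \otimes \mu)$ with $\mu \in L(\Phi_b)$, and since $x \in \tilde W_b$ preserves $V_b$, the requirement $\tilde{E}(a \otimes \mu)(V_b) \subseteq V_b$ combined with $a \notin V_b$ forces $\mu$ into the radical $\RR b$ of $(- \mid -)|_{V_b}$. Hence $x = \tilde{E}(a \otimes b)^k$ for some $k \in \ZZ$, and since $b \in R$ this element lies in $\ker \varphi$; but $\varphi|_{\tilde W_b}$ is injective by Lemma~\ref{lem:IsoAffUG}, so $k = 0$. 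Combining $\tilde W = \tilde W_b \ltimes \tilde{N}(\Gamma_b, a)$ with $\tilde W_b \cong W_b$ (Lemma~\ref{lem:IsoAffUG}) and the decomposition $W_b \cong W_{\fin} \ltimes N(\Gamma_{\fin}, b)$ from Section~\ref{CoxSubgroupWb} yields the full chain of isomorphisms in~(a) and the splitting of the short exact sequence.
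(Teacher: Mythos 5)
Your proposal is correct and follows essentially the same route as the paper: part (b) is exactly the advertised ``calculation with the Eichler--Siegel map,'' and for part (a) your direct verification that $\tilde{N}(\Gamma_b,a)$ is a normal complement of $\tilde{W}_b$ (product decomposition via $\tilde{s}_{t^\star}=\tilde{s}_t\,\tilde{\TR}_a(\alpha_t)$ plus trivial intersection from the injectivity of $\varphi|_{\tilde{W}_b}$) is just a more explicit rendering of the paper's identification of $\tilde{N}(\Gamma_b,a)$ as the kernel of the action of $\tilde{W}$ on $\tilde{V}/U$. Both arguments rest on the same two inputs, namely the normality of $\tilde{N}(\Gamma_b,a)$ and Lemma~\ref{lem:IsoAffUG}, so there is nothing to add.
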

\begin{proof}
The kernel of the map $\varphi: \tilde{W} \rightarrow W$  is a subgroup of $\tilde{N}(\Gamma_{b},a)$ by Remark~\ref{Rem:HomotildeWtoW} and Lemma~\ref{Lem:AffineGItter} (b). This fact and the fact that $\varphi(\tilde{W}_b \ltimes   \tilde{N}(\Gamma_{b},a)) =
W_b \ltimes  N(\Gamma_{b},a)) = W$ by Lemma~\ref{lem:Translationgroups} (c) yield $\displaystyle \tilde{W} = \tilde{W}_b \ltimes   \tilde{N}(\Gamma_{b},a)$.
The isomorphism stated in (a) is a consequence of Lemmas~\ref{lem:IsoAffUG} and \ref{Lem:AffineGItter} (c).
%It is $\tilde{W}_b \ltimes   \tilde{N}(\Gamma_{b},a) =  $
%$Let $\psi(\tilde{W})$ be the action of $ \tilde{W}  $  on $\tilde{V}/U$. Then it is $\psi(\tilde{s}_t) = \psi( \tilde{s}_{t^\star})$ and $\psi(\tilde{N}(\Gamma_b,a) ) = 1$ and ker$(\psi) \cap \tilde{W}_b = 1$. Hence the kernel of $\psi$ equals $\tilde{N}(\Gamma_b,a)$, and the short exact sequence splits as stated in (a).
Statement (b) follows from Lemma~\ref{Lem:StructurAffineCox} (c) and a calculation with the Eichler-Siegel map.
\end{proof}		

To complete the picture, we show that $\tilde{W}$ is not a Coxeter group for any simple system $R \subseteq \tilde{W}$ (see also the a bit stronger statement in \cite{BMN24}).

\begin{theorem}\label{Thm:NotCoxeter}
The hyperbolic cover $\tilde{W}$ is not a Coxeter group.
\end{theorem}
\begin{proof}
 Assume that $(\tilde{W},R)$ is a Coxeter system for some simple system $R \subseteq \tilde{W}$.
Let $(\rho, V_t, \langle -\mid - \rangle)$ be the Tits' representation of $(\tilde{W},R)$. 
Then $\tilde{W}$ is generated by a set of reflections $\{r_\alpha \mid \alpha \in \Phi\}$ with respect
to the set of roots $\Phi \subseteq V_t$. Let $z \in Z(\tilde{W})$  be such that $\langle z \rangle \cong \ZZ$.
Then $z$ commutes with every $r_\alpha$, $\alpha \in \Phi$ and therefore stabilizes every subspace
$\RR\alpha $ of $V_t$. As $z$ leaves the form $\langle - \mid - \rangle$ invariant, it is $z(\alpha) = \lambda(\alpha) \alpha$ for some $\lambda(\alpha) \in \{\pm 1\}$ for every $\alpha \in \Phi$. We conclude from $\langle \Phi \rangle = V_t$,
that $z$ is of order $1$ or $2$ in contradiction to the choice of $z$.
\end{proof}

\subsection{The extended Coxeter system}\label{Sec:ExtCox}

Let $(\overline{W}, \overline{S})$ be a crystallographic Coxeter system with a linear representation on $\overline{V}$, which is  equipped with a bilinear form $\langle -\mid -\rangle$. Further, let $\overline{\Phi} \subseteq \overline{V}$ be a root system  for the Coxeter system and $L(\overline{\Phi})$ its root lattice. Then $\overline{W}$ acts on $L(\overline{\Phi})$. The semi-direct product
$$\mathcal{W} := \mathcal{W}(\overline{W}, \overline{S}):= \overline{W} \ltimes L(\overline{\Phi})$$
is the \defn{extended Coxeter group (by its root lattice)} (see \cite{Lek2}).
We write elements $w \in \mathcal{W}$ as pairs 
$w = (\overline{w},\lambda) $, where $\overline{w}$ is in $\overline{W}$ and $\lambda$ in $L(\overline{\Phi})$.

We further assume that  $(\overline{W}, \overline{S})$ is of star type.
Label the vertices of $\overline{\Gamma}$ by $1$ to $n$, so that $t$ is the label of a vertex of maximal valence.
Further, let $\alpha_i$ be the positive root of $\overline{\Phi}$ related to $s_i \in \overline{S}$,
$\overline{ B}: = \{\alpha_1, \ldots ,\alpha_n\}$, and $$s_{t^{\star}}:= (s_t, \alpha_t).$$

Set
$$\mathcal{S}:= \overline{S}\cup \{s_{1^{\star}}\}\subset \mathcal{W},$$
and call $\mathcal{S}$  \defn{simple system for} $\mathcal{W}$, and $(\mathcal{W}, \mathcal{S})$ \defn{ extended Coxeter system of star type}. 

As a consequence of Proposition~\ref{ExactStructure} we obtain the next statement.

\begin{theorem}\label{Prop:CharExtendedWeylNotTubular}
The following holds.
\begin{itemize}
\item[(a)]
Let $(\tilde{W}, \tilde{S})$ be the hyperbolic cover of a tubular elliptic Weyl system, then it  is isomorphic to an extended  Coxeter system  of star type.
\item[(b)]
Let $(\mathcal{W}, \mathcal{S})$ be
an extended Coxeter system of star type, where $\mathcal{W} = \mathcal{W}(\overline{W}, \overline{S})$. 
If $\overline{\Gamma}$ is an affine diagram, then $(\mathcal{W}, \mathcal{S})$ is isomorphic to the hyperbolic cover of a tubular elliptic Weyl system.
\end{itemize}
\end{theorem}
\begin{proof}
Statement (a) is a consequence of Proposition~\ref{ExactStructure}
(a) and (b).
Let $\overline{\Gamma}$ be of affine type. Then according to Proposition~2.18 (c) there 
is an elliptic Weyl system $(W,S)$ such that $(\overline{W}, \overline{S})$ is isomorphic 
to $(W_b,S_b)$. Label $\overline{\Gamma}$ and $\Gamma_b$ identically by the set of labels  $\{0 , \ldots ,n\}$, and let $t$ be the label of the vertex of valency at least $3$.
Then an isomorphism between $(\overline{W}, \overline{S})$ and $(W_b,S_b)$ is given by the 
map $s_i \mapsto s_{i}$. Let $\tilde{W}$ be the hyperbolic cover of $W$. Then $s_i \mapsto \tilde{s}_i$, for $0 \leq i \leq n-1$ defines an
isomorphism from $(W_b,S_b)$ to $(\tilde{W}_b,\tilde{S}_b)$ by Lemma~\ref{lem:IsoAffUG}.
Therefore, the map sending  $s_i$ to $\tilde{s}_{i}$, for $0 \leq i \leq n$, is an isomorphism from $(\overline{W}, \overline{S})$ to $(\tilde{W}_b,\tilde{S}_b)$, which can be
extended to an isomorphism between $(\mathcal{W}, \mathcal{S})$  and $(\tilde{W}, \tilde{S})$ by sending $s_{t^{\star}}$ to $\tilde{s}_{t^{\star}}$ by Proposition~\ref{ExactStructure}. This shows  (b).
\end{proof}

\subsection{A normal form for $\tilde{W}$}
We derive from Proposition~\ref{ExactStructure} a normal form for the elements in $\tilde{W}$.

\begin{corollary}\label{Cor:NormalFormTildeW}
Let $\tilde{w} \in  \tilde{W}$.
\begin{itemize}
\item[(a)] The element $\tilde{w}$ can be  written uniquely as a triple $(w_{\fin}, \TR_b(\lambda), \tilde{\TR}_a(\mu))$, where $w_{\fin} \in W_{\fin},  \lambda \in L(\Phi_{\fin})$ and $\mu \in  L(\Phi_b)$, or in vector notation as {\scriptsize{$ \left( \begin{array}{c} w_{\fin} \\ \lambda \\ \mu \end{array} \right)$}}.
\item[(b)] The element $\tilde{w}$ can also be written uniquely as the tuple $(\tilde{w}_b,\tilde{\TR}_a(\mu))$, where $\tilde{w}_b = w_{\fin} \TR_b(\lambda) \in W_b$, or in vector notation as {\scriptsize{$\left( \begin{array}{c} \tilde{w}_b\\ \mu\end{array} \right)$}}.
\item[(c)] The reflection $\tilde{s}$ in  $\mathrm{End}(\tilde{V})$ with respect to the root $\alpha +l b + ka$, where $\alpha$ is in $\Phi_{\fin}$ and $k,l \in \ZZ$, is in vector notations, respectively
{\scriptsize	{$$ \left( \begin{array}{c} s_\alpha \\ l \alpha \\  k \alpha + kl b \end{array} \right) ~\mbox{and}~ \left( \begin{array}{c} s_{\alpha +l b} \\  k( \alpha + l b)\end{array} \right) .$$}}
\item[(d)] The multiplications of two elements are, respectively, given by 
{\scriptsize {$$ \left( \begin{array}{c} w_1\\ \lambda_1 \\ \mu_1 \end{array} \right)
\left( \begin{array}{c} w_2\\ \lambda_2 \\ \mu_2\end{array} \right)
= \left( \begin{array}{c} 
w_1w_2\\w_2^{-1}(\lambda_1) + \lambda_2 \\ 
t_b(\lambda_2)^{-1}w_2^{-1}(\mu_1) + \mu_2\end{array} \right)
$$}} and 
{\scriptsize $$ \left( \begin{array}{c} \tilde{w}_1\\ \mu_1 \end{array} \right)
\left( \begin{array}{c} \tilde{w}_2\\ \mu_2\end{array} \right)
= \left( \begin{array}{c} 
\tilde{w}_1 \tilde{w}_2\\\tilde{w}_2^{-1}(\mu_1) + \mu_2 \\ 
\end{array} \right).
$$}
\end{itemize}
\end{corollary}

\section{The Coxeter transformation in $\tilde{W}$}\label{CoxeterTransMulti}

In this section, we discuss our main actor, the Coxeter transformation. It is the map
$$\tilde{c} := \tilde{s}_{1} \cdots \widehat{{s}_t } \cdots   \tilde{s}_{n}  \tilde{s}_{0} \tilde{s}_{t}  \tilde{s}_{t^\star} $$
in $\tilde{W}$. Analogously, we define the Coxeter transformation  $\hat{c}$ in  $\hat{W}$.

\begin{remark}\label{Rem:ConjCoexTr}
By the same proof as given for $c$ in $W$ in  \cite[Proposition 5.21 (c) and (a)]{BWY21} the definition of the Coxeter transformation $\tilde{c}$ in $\tilde{W}$ is independent of the elliptic root basis $B$ chosen  and of the order of $\tilde{S}\setminus{\{\tilde{s}_t, \tilde{s}_{t^\star} \}}$ chosen up to conjugacy. 
\end{remark}

Next, we calculate the Coxeter transformation $\tilde{c} \in \tilde{W}$ using the vector notation.
\begin{align*}
\tilde{c} = \tilde{s}_{1} \cdots \widehat{s_t} \cdots   \tilde{s}_{n}  \tilde{s}_{0} \tilde{s}_{t}  \tilde{s}_{t^\star} & = \left( \begin{array}{c} s_1 \\ 0 \\  0  \end{array} \right)  \cdots   \widehat{ \left( \begin{array}{c} s_t \\ 0 \\  0  \end{array} \right) }  \cdots \left( \begin{array}{c} s_{n-2} \\ 0 \\  0  \end{array} \right) 
 \left( \begin{array}{c} s_{\tilde{\alpha}} \\  - \tilde{\alpha}  \\  0  \end{array} \right) 
 \left( \begin{array}{c} s_t \\ 0 \\  0  \end{array} \right)
 \left( \begin{array}{c} s_t \\ 0 \\  \alpha_t  \end{array} \right)\\
&= \left( \begin{array}{c} s_1 \cdots \widehat{s_t} \cdots s_{n-2} \\ 0  \\  0  \end{array} \right) 
 \left( \begin{array}{c} s_{\tilde{\alpha}} \\  - \tilde{\alpha}  \\  0  \end{array} \right) 
 \left( \begin{array}{c} \idop   \\ 0\\ \alpha_t  \end{array} \right)\\
&=   \left( \begin{array}{c} s_1 \cdots \widehat{s_t} \cdots s_{n} s_{\tilde{\alpha}}  \\ 
 	- \tilde{\alpha} \\ \alpha_t \end{array} \right).
\end{align*}

 \subsection{The reflections}\label{Sec:Reflections}
 	
 Let $(W,S)$ be a tubular elliptic  Weyl system with set of reflections  $T$ and  extended root system $\Phi$. Let $(\tilde{W}, \tilde{S})$ and $(\hat{W}, \hat{S})$ be the hyperbolic covers of $(W,S)$. We analogously  to the elliptic group define the set of reflections  $\tilde{T}$ and $\hat{T}$ as
 $$\tilde{T}:= \cup_{w \in \tilde{W}} w^{-1} \tilde{S} w~\mbox{and}~\hat{T}:= \cup_{w \in \hat{W}} w^{-1} \hat{S} w,~\mbox{respectively}.$$
 	
 	As $T$ generates $W$, each $w \in W$ is a product $w = t_1 \cdots t_r$ where $t_i \in T$. If this product is of minimal length, then we say that this $T$-factorization is \defn{reduced} and that $\ell_T(w) =r$. For $w  \in W$ with $\ell_T(w) =r$ we put 
 	$$
 	\Red_T(w) := \{ (t_1 , \ldots, t_r) \in T^r \mid w=t_1 \cdots t_r \}.
 	$$
 	Analogously, we define $\ell_{\tilde{T}}(\tilde{w})$ and $\ell_{\hat{T}}(\hat{w})$, as well as $\mbox{Red}_{\tilde{T}}(\tilde{w})$ and $\mbox{Red}_{\hat{T}}(\hat{w})$, where $\tilde{w} \in \tilde{W}$ and $\hat{w} \in \hat{W}$.

 	\begin{Lemma}\label{Lem:ReflectionsW-tildeW}
 	The following holds.
 	\begin{itemize}
 	\item[(a)]	Every reflection $t \in T \subset W$ has a unique preimage in $\tilde{T}\subset \tilde{W}$ and in $\hat{T} \subseteq \hat{W}$, which we denote by $\tilde{t}$ and $\hat{t}$, respectively. Moreover, $\varphi(\tilde{T}) = T$.
 	\item[(b)] Let $t_1, t_2, t_3 \in T$. It is $t_1^{t_2} = t_3 $ in $W$ if and only if $\tilde{t} _1^{\phantom{.}\tilde{t}_2} =  \tilde{t}_3 $ in $\tilde{W}$.
    \item[(c)] If $(t_1, \ldots, t_k)$ and $(u_1, \ldots, u_k)$ are in $T^k$ for some $k \in \NN$, and if there is a braid $\sigma \in {\mathcal{B}}_{k} $ that maps $(t_1, \ldots, t_k)$ onto $(u_1, \ldots, u_k)$, then $\sigma(\tilde{t}_1, \ldots, \tilde{t}_k) = (\tilde{u}_1, \ldots, \tilde{u}_k)$.
    \end{itemize}
\end{Lemma}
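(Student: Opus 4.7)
The plan is to exploit the epimorphism $\varphi : \tilde{W} \to W$ from Remark~\ref{Rem:HomotildeWtoW} together with clean parametrisations of both reflection sets by the single root system $\Phi$. The first preparatory step is to establish
\[
T = \{s_\alpha \mid \alpha \in \Phi\} \quad \text{and} \quad \tilde{T} = \{\tilde{s}_\alpha \mid \alpha \in \Phi\}.
\]
The first equality is standard. For the second, one needs to know that $\tilde{W}$ preserves $V$ inside $\tilde{V}$; this is precisely Lemma~\ref{center}(c), which gives $[\tilde{V}, \tilde{W}] \le V$, so $\tilde{w}(V) \subseteq V$ for every $\tilde{w} \in \tilde{W}$. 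Consequently the root underlying any $\tilde{w}^{-1}\tilde{s}_{\alpha_i}\tilde{w}$ lies in $V$, and an orbit calculation identifies it with $\tilde{W}\cdot B$, which in turn equals $W\cdot B = \Phi$ since $\varphi(\tilde{w})|_V = \tilde{w}|_V$.

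Part (a) is then immediate. Given $t = s_\alpha \in T$, the element $\tilde{t} := \tilde{s}_\alpha \in \tilde{T}$ satisfies $\varphi(\tilde{t}) = t$. For uniqueness, suppose $\tilde{s}_\beta \in \tilde{T}$ also maps to $s_\alpha$; then $s_\beta = s_\alpha$ in $W$, and since $\Phi$ is reduced we have $\beta = \pm\alpha$, hence $\tilde{s}_\beta = \tilde{s}_\alpha$. The assertion $\varphi(\tilde{T}) = T$ is now exactly surjectivity of the bijection $\alpha \mapsto \tilde{s}_\alpha \mapsto s_\alpha$ (modulo sign). The same argument applies verbatim to $\hat{W}$ acting on $\hat{V}$; alternatively one transfers the statement via the isomorphism $\tilde{W} \cong \hat{W}$ of Proposition~\ref{StructureTildeW}(b).

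For (b), since $\varphi$ is a homomorphism and $\tilde{T}$ is closed under $\tilde{W}$-conjugation,
\[
\varphi\bigl(\tilde{t}_1^{\,\tilde{t}_2}\bigr) = \varphi(\tilde{t}_1)^{\varphi(\tilde{t}_2)} = t_1^{t_2} = t_3 = \varphi(\tilde{t}_3),
\]
with $\tilde{t}_1^{\,\tilde{t}_2}$ and $\tilde{t}_3$ both lying in $\tilde{T}$. The uniqueness proved in (a) then forces $\tilde{t}_1^{\,\tilde{t}_2} = \tilde{t}_3$. Conversely, applying $\varphi$ to this identity recovers $t_1^{t_2} = t_3$.

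For (c) I would induct on the word length of $\sigma$ in the standard Artin generators $\sigma_i^{\pm 1}$. A single generator only ever replaces one entry by a conjugate of an adjacent one and copies the other into a new slot, so by (b) the lift of $\sigma_i(t_1,\ldots,t_k)$ is exactly $\sigma_i(\tilde{t}_1,\ldots,\tilde{t}_k)$, and induction completes the argument. One can phrase this more slickly: the tuple $\sigma(\tilde{t}_1,\ldots,\tilde{t}_k)$ lies in $\tilde{T}^k$, and componentwise application of $\varphi$ (which is a group homomorphism, hence intertwines the Hurwitz action) produces $(u_1,\ldots,u_k)$; componentwise uniqueness from (a) then identifies the entries with $(\tilde{u}_1,\ldots,\tilde{u}_k)$. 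The only non-formal step in the whole lemma is the identification $\tilde{T} = \{\tilde{s}_\alpha \mid \alpha \in \Phi\}$, and that is precisely where Lemma~\ref{center}(c) does the real work by confining the $\tilde{W}$-action to $V$; everything else is a formal consequence of the uniqueness of reflection lifts.
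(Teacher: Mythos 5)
Your proof is correct and follows the same overall plan as the paper's (whose proof is essentially a one-line citation): establish that $\varphi$ restricts to a bijection $\tilde{T}\to T$, deduce (b) by applying $\varphi$ together with the uniqueness from (a), and obtain (c) by induction on the braid word using (b). The only substantive difference is in how you justify the key parametrization $\tilde{T}=\{\tilde{s}_\alpha\mid\alpha\in\Phi\}$ --- you derive it from Lemma~\ref{center}(c) (so that $\tilde{W}$ preserves $V$ and the orbit of $B$ is $\Phi$) plus reducedness of $\Phi$, whereas the paper reads the uniqueness of the lift off the normal form of Corollary~\ref{Cor:NormalFormTildeW}; both routes are valid and amount to the same identification of reflections with roots up to sign.
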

\begin{proof}
Statement (a) follows directly from  Corollary~\ref{Cor:NormalFormTildeW}, and (b)  from Proposition~\ref{StructureTildeW} and (a), while (c) is a consequence of (b).
\end{proof}

In the next section, we determine $\ell_T(x)$ for $x \in \{c,\tilde{c}, \hat{c}\}$. Therefore, the next lemma will be of great help.
 	
\begin{Lemma}\label{Lem:lengthtildeW-W}
Set $z:= \tilde{E}(a \otimes b) \in Z(\tilde{W})$. If $\ell_{\tilde{T}}(\tilde{c}z^i) = \ell_{\tilde{T}}(\tilde{c})$ for all $i \in \ZZ$, then $\ell_{\tilde{T}}(\tilde{c}) = \ell_T(c)$, and $\cup_{i \in \ZZ} 	\Red_{\tilde{T}}(\tilde{c}z^i)$ are bijectively mapped  to $\Red_T(c)$, and each Hurwitz-orbit is mapped onto a  Hurwitz-orbit.
\end{Lemma}
\begin{proof} 	
Let $c = t_1 \cdots t_m$ be a $T$-factorization. Then $\tilde{c}z^i = \tilde{t}_1 \cdots \tilde{t}_m$ for some $i \in \Z$. Thus 
$\ell_T(c) \geq \ell_{\tilde{T}}(\tilde{c}z^i) = \ell_{\tilde{T}}(\tilde{c})$. On the other hand $\tilde{c} = \tilde{t}_1 \cdots \tilde{t}_m$, where $\tilde{t}_1, \ldots , \tilde{t}_m \in \tilde{T} $, implies that $c = t_1 \cdots t_m$ is a $T$-factorization, and 
$\ell_{\tilde{T}}(\tilde{c}) \geq \ell_T(c)$. This shows $\ell_{\tilde{T}}(\tilde{c}) = \ell_T(c)$. Therefore, $\varphi$ induces a map from 
$\cup_{i \in \ZZ} 	\Red_{\tilde{T}}(\tilde{c}z^i)$ to $\Red_T(c)$.
We conclude with
Lemma~\ref{Lem:ReflectionsW-tildeW} (a) that this map is a bijection, and the last statement follows from 
Lemma~\ref{Lem:ReflectionsW-tildeW} (c).
\end{proof}		
 	
\subsection{The reflection length of a Coxeter transformation}\label{sec:reflength}
The aim of this subsection is to show Proposition~\ref{Lem:redCoxElt}. More precisely, we will determine the reflection length of a Coxeter transformation $\tilde{c}$ of the extended Weyl group $\tilde{W}$, which then also implies $\ell_T(c)$.
 	 
We will show that $\ell_{\tilde{T}}(\tilde{c}) = |\tilde{S}| = n+2$, which shows that the factorization of  $\tilde{c}$  in pairwise different simple reflections is $\tilde{T}$-reduced. This will then be used in the proof of Theorem~\ref{conj:WeightProjElliptic}.
 	
Since $\hat{W}$  and $\tilde{W}$ are isomorphic groups via an isomorphism $\psi$, which maps $\hat{s}_i $ onto $ \tilde{s}_i $, for $0 \leq i \leq n$, and $\hat{s}_{t^\star}$ onto $\tilde{s}_{t^{\star}} $ and since $\psi$ maps $\hat{T} $ onto $\tilde{T}$,  it is $\ell_{\hat{T}}(\hat{c}) = \ell_{\tilde{T}}(\tilde{c})$.
 	
The reflection length of $\hat{c}$ can be proven using Scherk's Theorem as we show now. In order to formulate Scherk's theorem, we recall some notation. Let $k$ be a field of characteristic different from $2$, and  $U$ a finite-dimensional $k$-vector space that is equipped with a symmetric bilinear form $(- \mid -)$. A null-space (totally isotropic subspace) of\linebreak $(U,(- \mid-))$ is a subspace that consists only of isotropic vectors, that is, as char$(k) \neq 2$, a subspace where $(- \mid -)$ vanishes.
 	
 \begin{theorem}[{Scherk, \cite[Theorem 260.1]{ST89}}]\label{thm:scherk}
 Let $k$ be a field of characteristic different from $2$, $U$ a finite-dimensional $k$-vector space with a non-degenerate symmetric bilinear form $(-\mid -)$. Further let $\sigma \neq \idop_{U}$ be an isometry of $U$. Let $F= C_U(\sigma)$ be the fixed space of $\sigma$ in $U$, and $n$  the dimension of $F^{\perp}$,  the orthogonal complement of $F$ in $U$. If  $F^{\perp}$ is not a null-space,  then $\sigma$ is the product of $n$ and not less than $n$ reflections. If $F^{\perp}$ is a null space, then $\sigma$ is the product of $n+2$ and not less than $n+2$ reflections.
 \end{theorem}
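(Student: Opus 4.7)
The plan revolves around the \emph{moved space} $M(\sigma) := (\idop_{U} - \sigma)(U)$. Two elementary observations drive the argument: $M(\sigma) \subseteq F^{\perp}$ always, with equality when the form on $U$ is non-degenerate (both have dimension $\dim U - \dim F$); and for any reflection factorization $\sigma = s_{v_{1}}\cdots s_{v_{k}}$, an induction on $k$ using the fact that $M(s_v\tau) \subseteq \mathbb{R}v + M(\tau)$ gives $M(\sigma) \subseteq \Span(v_{1},\ldots,v_{k})$. Together these yield the universal lower bound $k \geq n := \dim F^{\perp}$, which is the ``not less than'' half in both cases of the theorem.

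For the upper bound in the non-null case I would induct on $n$. The base case $n = 1$ is immediate: the only non-identity isometry of a non-null line $\langle v\rangle$ is the reflection $s_{v}$. For the inductive step, pick a non-isotropic $v \in F^{\perp}$ (possible since $F^{\perp}$ is not null) and set $u := \sigma(v) - v \in F^{\perp}$. A direct computation gives $(u,u) = 2(v,v) - 2(v,\sigma(v))$, and shows that when $u$ is non-isotropic, $s_{u} \sigma$ fixes $v$, so that its fixed-space orthogonal complement has dimension $n-1$ and remains non-null; induction then factorizes $s_{u}\sigma$ into $n-1$ reflections, yielding $n$ reflections for $\sigma$. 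The subtle subcase, when $u$ is isotropic, is handled by replacing $v$ with a different non-isotropic element $v' \in F^{\perp}$ for which $\sigma(v') - v'$ is non-isotropic; the existence of such $v'$ is guaranteed by $F^{\perp}$ not being totally isotropic, together with a genericity argument.

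The main obstacle is the null case. The starting point is the structural observation $F^{\perp} \subseteq (F^{\perp})^{\perp} = F$. Pick a totally isotropic subspace $N \subset U$ dually paired with $F^{\perp}$, so that $F^{\perp} \oplus N$ is a hyperbolic subspace, and set $W := (F^{\perp} \oplus N)^{\perp}$, yielding the orthogonal decomposition $U = (F^{\perp} \oplus N) \perp W$. The constraint that $\sigma$ be an isometry fixing $F = F^{\perp} \oplus W$ forces $\sigma$ into a unipotent upper-triangular block form whose only non-zero off-diagonal block is a non-zero skew-symmetric map $A \colon N \to F^{\perp}$; in particular $\det \sigma = 1$, so every reflection factorization of $\sigma$ has even length. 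Combined with the lower bound this excludes $k = n$ (which would force each $v_{i} \in F^{\perp}$, contradicting non-isotropy of $v_{i}$) and, via a finer parity-plus-rank analysis of $\Span(v_{1},\ldots,v_{k})$, also $k = n+1$, leaving $k \geq n+2$. For the matching upper bound, a suitable non-isotropic vector $u \in N + F^{\perp}$ (non-isotropic because $N$ and $F^{\perp}$ pair non-trivially) makes $s_{u}\sigma$ have non-null fixed-space orthogonal complement of dimension $n+1$, so the non-null case produces a factorization of $s_{u}\sigma$ into $n+1$ reflections and hence of $\sigma$ into $n+2$. The technically hardest step is ruling out $k = n+1$ in the null setting, where the parity constraint and the rigid structure of the moved space have to be combined carefully.
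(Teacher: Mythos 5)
The paper does not actually prove this statement: it is Scherk's theorem, quoted from the literature (Theorem~260.1 of \cite{ST89}) and used as a black box in the proof of Proposition~\ref{Lem:redCoxElt}, so there is no in-paper argument to measure yours against. Note also that the statement as reproduced tacitly assumes $(-\mid-)$ is non-degenerate on $U$ (as it is for $\hat V$, where it is applied); your argument correctly relies on this, e.g.\ for $M(\sigma)=F^{\perp}$ and for $F^{\perp\perp}=F$. Your outline follows the standard route (moved space, the containment $M(\sigma)\subseteq\Span(v_{1},\ldots,v_{k})$ for the lower bound, hyperbolic decomposition plus $\det\sigma=1$ in the null case), and most of it is sound. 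In fact the step you single out as hardest, excluding $k=n+1$ in the null case, is quick once you observe that the block $A\colon N\to F^{\perp}$ becomes an alternating form after identifying $N$ with the dual of $F^{\perp}$, hence has even rank; since $A$ surjects onto $M(\sigma)=F^{\perp}$, the number $n$ is even, and $\det\sigma=1$ then forbids the odd length $n+1$.

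The genuine gap is in the non-null inductive step. Having chosen a non-isotropic $v\in F^{\perp}$ with $u=\sigma(v)-v$ non-isotropic, you assert that the fixed-space complement of $s_{u}\sigma$ ``remains non-null.'' This is not automatic, and it is precisely the content of Scherk's refinement of Cartan--Dieudonn\'e. One only knows $M(s_{u}\sigma)\subseteq F^{\perp}\cap v^{\perp}$, and this intersection can perfectly well be totally isotropic: if the restriction of the form to $F^{\perp}$ has radical of codimension one and $v$ spans a complement of that radical, then $F^{\perp}\cap v^{\perp}$ is exactly the radical, a null space of dimension $n-1$. Nothing in your argument prevents $M(s_{u}\sigma)$ from being such a null space, and in that event your induction returns $(n-1)+2=n+1$ reflections for $s_{u}\sigma$, hence $n+2$ for $\sigma$, destroying the count. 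The ``genericity'' escape you invoke is aimed at the easier subcase where $u$ itself is isotropic and does not address this. The classical proof handles it by a more delicate choice: one shows that $w$ can be taken (not necessarily inside $F^{\perp}$) so that $u=\sigma(w)-w$ is non-isotropic \emph{and} $M(s_{u}\sigma)$ is again not a null space, which requires a separate case analysis. As written, your inductive step would fail.
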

 	
 In order to apply Scherk's Theorem to our situation, we need the well-known fact described in the following lemma. Let $U$ be a finite-dimensional real vector space and attach to it a symmetric bilinear form $(- \mid -)$.
 	
 \begin{Lemma}\label{lem:orth_complement_lin_ind_roots}
 Let $\beta_{1},\ldots,\beta_{m}\in U$ be linear independent and non-isotropic vectors in $U$, and let $u\in U$, then $s_{\beta_{1}}\cdots s_{\beta_{m}}(u)=u$ if and only if $s_{\beta_{i}}(u)=u$ for $1\leq i \leq m$.
 \end{Lemma}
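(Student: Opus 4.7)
The plan is to handle the easy direction first: if every $s_{\beta_i}$ fixes $u$, then clearly so does their composition. The work is therefore all in the converse.

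For the converse, I would set up the telescoping iteration explicitly. Define $w_{m+1}:=u$ and $w_{k}:=s_{\beta_{k}}\cdots s_{\beta_{m}}(u)$ for $1\leq k\leq m$. By the defining formula of a reflection,
\[
w_{k}\;=\;w_{k+1}-\frac{2(\beta_{k}\mid w_{k+1})}{(\beta_{k}\mid \beta_{k})}\,\beta_{k},
\]
so unrolling the recursion yields
\[
s_{\beta_{1}}\cdots s_{\beta_{m}}(u)\;=\;w_{1}\;=\;u-\sum_{k=1}^{m}\frac{2(\beta_{k}\mid w_{k+1})}{(\beta_{k}\mid \beta_{k})}\,\beta_{k}.
\]
The hypothesis $w_{1}=u$ thus gives $\sum_{k=1}^{m}\lambda_{k}\beta_{k}=0$ with $\lambda_{k}:=2(\beta_{k}\mid w_{k+1})/(\beta_{k}\mid \beta_{k})$.

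The key step is invoking linear independence of $\beta_{1},\ldots,\beta_{m}$ to conclude $\lambda_{k}=0$ for every $k$, equivalently $(\beta_{k}\mid w_{k+1})=0$ since the $\beta_{k}$ are non-isotropic. Then I would run a descending induction on $k$: for $k=m$ we get $(\beta_{m}\mid u)=0$, so $s_{\beta_{m}}(u)=u$ and hence $w_{m}=u$; the condition for $k=m-1$ then reads $(\beta_{m-1}\mid u)=0$, giving $s_{\beta_{m-1}}(u)=u$ and $w_{m-1}=u$; continuing in this fashion yields $s_{\beta_{i}}(u)=u$ for each $i$.

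I do not anticipate a real obstacle here: the only ingredient beyond bookkeeping is the linear independence of the $\beta_{i}$, which converts a single vector equation into $m$ scalar equations, and the descending induction is automatic because the vanishing of $(\beta_{k}\mid w_{k+1})$ at the highest index immediately collapses $w_{k+1}$ back to $u$, feeding the next step of the induction. No characteristic assumption beyond the standing real-vector-space setup is needed, and the argument does not use any property of the form $(-\mid -)$ other than bilinearity.
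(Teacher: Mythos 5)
Your proof is correct and rests on the same idea as the paper's: each reflection displaces a vector by a multiple of its own root, so the hypothesis forces a vanishing linear combination of the $\beta_k$, and linear independence (plus non-isotropy) kills each coefficient. The paper peels off one reflection at a time by induction, whereas you unroll the whole telescoping sum at once and then back-substitute, but this is only a cosmetic reorganization of the same argument.
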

 \begin{proof}
 If $s_{\beta_{1}}\cdots s_{\beta_{m}}(u)=u$, then 
 \[-\frac{(\beta_{1} \mid u)}{(\beta_{1},\beta_{1})}\beta_{1}=s_{\beta_{2}}\cdots s_{\beta_{m}}(u)-u \in \spanr(\beta_{2},\ldots,\beta_{m})\]
 and the linear independence of $\lbrace \beta_{1},\ldots,\beta_{m}\rbrace$ implies $(\beta_{1} \mid u)=0$. The latter is equivalent to $s_{\beta_{1}}(u)=u$. In the same manner, we inductively get $s_{\beta_{i}}(u)=u$ for $2\leq i \leq m$.
\end{proof}
 	
Now let us consider the hyperbolic cover of an elliptic Weyl group $\hat{W}$, the $\RR$-space $\hat{V}$ and the symmetric form $(-\mid -)$ extended to $\hat{V}$ (see Section~\ref{Sec:HyperbolicExtended}), and let $\Phi$ be the elliptic root system attached to $W$.
 	
\begin{Lemma}\label{lem:fix_c}
 Let $\hat{W}$ be a hyperbolic cover of an elliptic Weyl group $W$ and $\hat{c} \in \hat{W}$ be a Coxeter transformation. Then $C_{\hat{V}}(\hat{c}) = R$. 
\end{Lemma}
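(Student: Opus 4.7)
The plan is to prove both inclusions separately, exploiting the fact that the standard factorization of $\hat{c}$ uses exactly the $n+2$ reflections in $\hat{S}$, whose roots form a basis of $V$.

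First I would establish $R \subseteq C_{\hat{V}}(\hat{c})$. Every simple reflection $\hat{s}_\alpha \in \hat{S}$ has its root $\alpha$ in $V$, and $R$ is by definition the radical of $(-\mid-)\restriction_{V}$, so $(\alpha\mid r)=0$ for all $r\in R$. Hence $\hat{s}_\alpha(r)=r$ for every generator $\hat{s}_\alpha$, and therefore $\hat{c}(r)=r$.

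For the reverse inclusion, take $v\in\hat{V}$ with $\hat{c}(v)=v$. Writing out $\hat{c}=\hat{s}_{1}\cdots\widehat{\hat{s}_{t}}\cdots \hat{s}_{n}\hat{s}_{0}\hat{s}_{t}\hat{s}_{t^\star}$, the $n+2$ reflections involved correspond exactly to the roots in the tubular elliptic root basis $B=\{\alpha_{0},\alpha_{1},\ldots,\alpha_{n},\alpha_{t}^{\star}\}$. Since $\Phi$ is tubular, $B$ is by definition a basis of $V$, so in particular these roots are linearly independent in $\hat{V}$. Lemma~\ref{lem:orth_complement_lin_ind_roots} (applied in $\hat{V}$) then yields $\hat{s}_\alpha(v)=v$, equivalently $(\alpha\mid v)=0$, for every $\alpha\in B$. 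Because $B$ spans $V$, this says $v\in V^{\perp_{\hat{V}}}$.

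It remains to identify this orthogonal complement. Since the form on $\hat{V}$ is non-degenerate (signature $(n+2,2,0)$) and $\dim\hat{V}-\dim V=(n+4)-(n+2)=2$, we have $\dim V^{\perp_{\hat{V}}}=2$. On the other hand $R\subseteq V^{\perp_{\hat{V}}}$ by the computation of the first paragraph, and $\dim R=\dim\langle a,b\rangle=2$ by Lemma~\ref{RadicalLattice}. Thus $V^{\perp_{\hat{V}}}=R$, so $v\in R$, completing the proof. The one place where care is needed is the dimension argument for $V^{\perp_{\hat{V}}}$; this is the key geometric input of the hyperbolic cover, and contrasts with the situation inside $V$ itself, where $V=V^{\perp_V}$ never holds.
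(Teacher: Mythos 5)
Your proof is correct and follows essentially the same route as the paper: both reduce $C_{\hat{V}}(\hat{c})$ to $\bigcap_{\alpha\in B}\alpha^{\perp}$ via Lemma~\ref{lem:orth_complement_lin_ind_roots} (using that $B$ is a basis of $V$ in the tubular case) and then identify this intersection with $R$. You merely make explicit, via the non-degeneracy and dimension count on $\hat{V}$, the final equality $V^{\perp_{\hat{V}}}=R$ that the paper's one-line proof leaves implicit.
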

\begin{proof}
Due to Lemma~\ref{lem:orth_complement_lin_ind_roots} we get $\displaystyle C_{\hat{V}}(\hat{c})=\bigcap_{\alpha\in B}C_{\hat{V}}(\hat{s}_{\alpha})=\bigcap_{\alpha\in B}\alpha^{\perp} = \bigcap_{\alpha\in B_{\fin}}\alpha^{\perp} \cap a^\perp 
\cap b^\perp = R$. 	
The last equality is valid by the construction in the proof of Lemma~\ref{HyperbolicExtension}.
\end{proof}
 	
\noindent	
{\bf Proof of Proposition~\ref{Lem:redCoxElt}}
Let $\hat{c}$ be a Coxeter transformation and $C_{\hat{V}}(\hat{c})$ its fixed space.  By Lemma \ref{lem:fix_c} it holds $ C_{\hat{V}}(\hat{c})^{\perp} = R^\perp = V$, and therefore $\dim (C_{\hat{V}}(\hat{c})^{\perp})= \dim (V) = n+2$. Since $(-\mid-)$ does not vanish on $V$,  Scherk's Theorem \ref{thm:scherk} yields $\ell_{\hat{T}}(\hat{c}) \geq n+2$. As by definition  $\hat{c}$ is the product of $|S| = n+2$ reflections,  it is $\ell_{\hat{T}} (\hat{c}) = n+2$.
 	
It remains to show $\ell_{\tilde{T}}(\tilde{c}) = \ell_T(c)$. We check the condition given in Lemma~\ref{Lem:lengthtildeW-W}. Let $\hat{z}$ be the preimage of  $z= \tilde{E}(a \otimes b)$ in $\hat{W}$, and let $j \in \ZZ$. Then $\hat{z}$ centralizes an $(n+3)$-dimensional subspace of $\hat{V}$, and it is $\hat{z}^j(b') = b' - ja$. By direct calculation, we see $C_{\hat{V}}(\hat{c}\hat{z}^j) = R$. Therefore, the same argument as in the last paragraph gives us $\ell_{\hat{T}} (\hat{c}\hat{z}^j) = n+2$ for each $j \in \ZZ$.  We conclude that $\ell_{\tilde{T}} (\tilde{c}z^j) = n+2$, and the second assertion follows with Lemma~\ref{Lem:lengthtildeW-W}. \hfill \qed

 \subsection{The posets $[\idop,w ]$ for $w  \in W $ and for $w$  in $\tilde{W}$}\label{Sec:IntervalPosets}
 
 Here we introduce the posets for $w \in W$ in $W$ and for $w \in \tilde{W}$ in $\tilde{W}$, which will be used in Section~\ref{Sec:Application}. 
 
 \begin{Definition} \label{def:PrefixPoset}
 
 \begin{itemize}
 \item[(a)] Define a partial order on $W$ by $x \leq y ~ \mbox{if and only if} ~\ell_T(y)= \ell_T(x) + \ell_T(x^{-1}y)$ for $x,y \in W$, called \defn{absolute order}, where $\ell_T$ is the length function on $W$ with respect to $T$.
 \item[(b)] For $w \in W$ the interval $[\idop, w] = \{x \in W \mid \idop \leq x \leq w \}$ is called the \defn{poset} of $w$ with respect to the partial order $\leq$.
 \item[(c)] Let $x = t_1\cdots  t_i$ and $(t_1,\ldots,t_m) \in \Red_T(c)$. We call $(t_1,\ldots , t_i)$ a \defn{prefix} of the factorization $(t_1,\ldots,t_m)$ for $1 \leq i \leq m $, where $m = \ell_T(c)$.\\
\end{itemize}
\end{Definition}
 
Note that $x \leq w$ if and only if there exists a reduced factorization $\left(t_1,\ldots , t_{\ell_T(x)} \right) \in \Red_T(x)$ that  is a prefix of a reduced factorization of $w$. By abuse of notation, we sometimes call $x$ a prefix of $w$. Analogously, we define the same notation for $w, x,y \in \tilde{W}$ and $t_i \in \tilde{T}$ as for $w,x,y \in W$ and $t_i \in T$.

\section{Hurwitz action in the tubular elliptic Weyl group and in its cover}\label{sec:HurwitzElliptic}

In this section we prove Theorem~\ref{thm:MainElliptic} by applying Theorem~\ref{Thm:11in 4}.
Let $(\tilde{W}, \tilde{S})$ be the hyperbolic cover of the elliptic Weyl system $(W,S)$. Then there is an extended Coxeter system $(\mathcal{W},
\mathcal{S})$ that is isomorphic to $(\tilde{W}, \tilde{S})$ by Theorem~\ref{Prop:CharExtendedWeylNotTubular}.  Choose the notation for
$\tilde{S}$ and $\mathcal{S}$ as in Section~\ref{Sec:ExtCox} such that 
$\tilde{s}_i \mapsto s_i$  and $\tilde{s}_{t^{\star}} \mapsto s_{t^{\star}}$
define an isomorphism from $(\tilde{W}, \tilde{S})$ to $(\mathcal{W},\mathcal{S})$.

We set $$c := s_1 \cdots \widehat{s}_t \cdots s_n s_0 s_t s_{t^\star} $$
and 
$$\mathcal{T}:= \cup_{w \in \mathcal{W}} w \mathcal{S} w^{-1}~\mbox{and}~ R_{\mathcal{T}}(c):= \{(t_1, \ldots, t_{n+2}) \in \mathcal{T}^{n+2}~\mid c = t_1 \cdots t_{n+2} \}.$$

\noindent
{\bf Proof of Theorem~\ref{thm:MainElliptic}}
%Let $\varphi$ be the isomorphism from $(\tilde{W}, \tilde{S})$ to
%$(\mathcal{W},\mathcal{S})$. 
By our setting, it is $\varphi(\tilde{c}) = c$ and  $\varphi(\tilde{T}) =\mathcal{T} $.
In addition, $\varphi$ induces a bijection  from  $\Red_{\tilde{T}}(\tilde{c})$
to $R_{\mathcal{T}}(c)$, which respects the Hurwitz action.
Therefore, the statement is a consequence of  Theorem~\ref{Thm:11in 4}.
 \hfill\qed

\bigskip
\noindent
  In \cite{BWY21} the authors and Yahiatene defined that a reflection factorization $c = t_1\cdots t_{n+2}$ of a Coxeter transformation $c$ in the elliptic Weyl group $W$ is generating if $W = \langle t_1, \ldots, t_{n+2} \rangle$, and denoted by $\Redd(c)$ the set of reduced generating reflection factorizations. The three authors showed that the braid group $\mathcal{ B}_{n+2}$  acts transitively on  $\Redd(c)$ in types $E_n^{(1,1)}$ ($n = 6,7,8$) (see \cite[Theorem~1.3]{BWY21}) and with at most two orbits in type $D_4^{(1,1)}$ (see the proof of \cite[Theorem~1.3]{BWY21}). Notice that in a previous version of \cite{BWY21} the two possible orbits in type $D_4^{(1,1)}$ were overlooked, which was observed by Charly Schwabe. Here, we clarify the situation. Representatives of the two possible orbits are $(s_1,s_3,s_4, s_0, s_2, s_{2^\star})$ and $(s_{\alpha_1-a},s_{\alpha_3-a},s_{\alpha_4-a}, s_{\widetilde{\alpha} +a -b}, s_2, s_{{\alpha_2} -a})$ (see Equations (7) and (8) of the proof of \cite[Theorem~1.3]{BWY21}).

\begin{corollary}\label{Cor:HurwitzW-tildeW}
If the elliptic root system $\Phi$ is of type $E_6^{(1,1)}, E_7^{(1,1)}$ or $E_8^{(1,1)}$, then $\varphi$ maps $\Red_{\tilde{T}}(\tilde{c})$ bijectively onto $\Redd(c)$, and in type $D_4^{(1,1)}$ there are two Hurwitz orbits on  $\Redd(c)$, and the set $\Red_{\tilde{T}}(\tilde{c})$ is mapped bijectively onto the Hurwitz orbit of $\Redd(c)$ that contains $(s_1,s_3,s_4, s_0, s_2, s_{2^\star})$.
\end{corollary}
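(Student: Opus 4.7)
The approach is to combine three ingredients already in hand: the lifting bijection of Lemma~\ref{Lem:lengthtildeW-W}, the Hurwitz transitivity in $\tilde{W}$ just established in Theorem~\ref{thm:MainElliptic}, and the classification of Hurwitz orbits on $\Redd(c)$ recalled from \cite{BWY21}. First I would verify that the hypothesis of Lemma~\ref{Lem:lengthtildeW-W} holds for every $j \in \ZZ$: the computation in the proof of Proposition~\ref{Lem:redCoxElt} already shows $C_{\hat{V}}(\hat{c}\hat{z}^j) = R$, whence Scherk's theorem gives $\ell_{\hat{T}}(\hat{c}\hat{z}^j) = n+2 = \ell_T(c)$, and transporting across the isomorphism of Proposition~\ref{StructureTildeW} yields $\ell_{\tilde{T}}(\tilde{c}z^j) = \ell_T(c)$ for all $j$. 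Lemma~\ref{Lem:lengthtildeW-W} then produces a bijection $\varphi : \bigcup_{j \in \ZZ} \Red_{\tilde{T}}(\tilde{c}z^j) \longrightarrow \Red_T(c)$. In particular $\varphi$ is injective on $\Red_{\tilde{T}}(\tilde{c})$, and what remains is to pin down the image.

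The second step identifies this image as a single Hurwitz orbit sitting inside $\Redd(c)$. By Lemma~\ref{Lem:ReflectionsW-tildeW}(c) the Hurwitz action on $\tilde{T}^{n+2}$ is $\varphi$-equivariant with the Hurwitz action on $T^{n+2}$, and by Theorem~\ref{thm:MainElliptic} the set $\Red_{\tilde{T}}(\tilde{c})$ is a single Hurwitz orbit. Hence $\varphi(\Red_{\tilde{T}}(\tilde{c}))$ is contained in a single Hurwitz orbit of $\Red_T(c)$, and the restriction of $\varphi$ is an equivariant bijection onto that orbit. To locate the orbit I would evaluate $\varphi$ on the canonical factorization $(\tilde{s}_1,\ldots,\widehat{\tilde{s}_t},\ldots,\tilde{s}_n,\tilde{s}_0,\tilde{s}_t,\tilde{s}_{t^\star}) \in \Red_{\tilde{T}}(\tilde{c})$, which maps to $(s_1,\ldots,\widehat{s_t},\ldots,s_n,s_0,s_t,s_{t^\star})$; its entries are precisely the simple system $S$, so they generate $W$ and the image lies in $\Redd(c)$. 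Since being a generating factorization is Hurwitz-invariant, the whole image of $\varphi$ is contained in $\Redd(c)$.

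To conclude I would invoke \cite[Theorem~1.3]{BWY21} together with the structural remarks recalled in the excerpt. In types $E^{(1,1)}_n$ for $n \in \{6,7,8\}$ the braid group acts transitively on $\Redd(c)$, so $\varphi(\Red_{\tilde{T}}(\tilde{c})) = \Redd(c)$ and bijectivity follows from injectivity. In type $D_4^{(1,1)}$ the set $\Redd(c)$ splits into (at most, hence exactly) two Hurwitz orbits with the representatives explicitly listed in the excerpt, and the orbit hit by $\varphi$ is the one containing the image of the canonical factorization, namely $(s_1,s_3,s_4,s_0,s_2,s_{2^\star})$. I do not foresee a serious obstacle: once the constant-reflection-length hypothesis is verified for all $z$-shifts, the argument is essentially bookkeeping around $\varphi$-equivariance of the Hurwitz action. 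The heaviest — though still routine — step is the extension of the Scherk computation of Section~\ref{sec:reflength} to every element $\tilde{c}z^j$, so as to license the use of Lemma~\ref{Lem:lengthtildeW-W} in full generality.
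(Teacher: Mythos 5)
Your treatment of the types $E_6^{(1,1)}, E_7^{(1,1)}, E_8^{(1,1)}$, and your identification of which orbit is hit in type $D_4^{(1,1)}$, follow essentially the same route as the paper: $\varphi$-equivariance of the Hurwitz action via Lemma~\ref{Lem:ReflectionsW-tildeW}\,(c), transitivity upstairs from Theorem~\ref{thm:MainElliptic}, injectivity of $\varphi$ on tuples, evaluation on the canonical factorization to land in $\Redd(c)$, and \cite[Theorem~1.3]{BWY21} downstairs. (Your detour through Lemma~\ref{Lem:lengthtildeW-W} is harmless but unnecessary: injectivity of $\varphi$ on $\Red_{\tilde{T}}(\tilde{c})$ is immediate from the unique lifting of reflections in Lemma~\ref{Lem:ReflectionsW-tildeW}\,(a).)

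There is, however, a genuine gap in the $D_4^{(1,1)}$ case. The corollary asserts that $\Redd(c)$ has \emph{exactly} two Hurwitz orbits, whereas the input you cite from \cite{BWY21} only gives \emph{at most} two, with the two listed tuples as candidate representatives; your parenthetical ``(at most, hence exactly)'' is a non sequitur, since a priori the two candidates could lie in the same orbit, in which case $\Redd(c)$ would be a single orbit and the statement would fail. To close this you must show that $(s_1,s_3,s_4, s_{\widetilde{\alpha}+a-b}, s_2, s_{\alpha_2-a})$ does not lie in the orbit of $(s_1,s_3,s_4,s_0,s_2,s_{2^\star})$. The paper does this by lifting the former tuple to $\tilde{W}$ and computing, in the normal form of Corollary~\ref{Cor:NormalFormTildeW}, that the product of the lifts has third coordinate $\alpha_2-b$ rather than $\alpha_2$; that is, it equals $\tilde{c}$ times a nontrivial central element and hence is not $\tilde{c}$. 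Since every tuple in $\varphi(\Red_{\tilde{T}}(\tilde{c}))$ has lifts multiplying to $\tilde{c}$, the second representative lies outside the image of $\varphi$ and therefore in a different Hurwitz orbit. This single computation simultaneously establishes that there are exactly two orbits and completes the identification of the image; without it, your argument proves only the weaker statement that $\Red_{\tilde{T}}(\tilde{c})$ maps bijectively onto the orbit of $(s_1,s_3,s_4,s_0,s_2,s_{2^\star})$, leaving open whether that orbit is proper in $\Redd(c)$.
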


\begin{proof} 
 Let $(\tilde{t}_1, \ldots , \tilde{t}_{n+2}), (\tilde{u}_1, \ldots, \tilde{u}_{n+2})  \in \Red_{\tilde{T}}(\tilde{c})$. Then $(t_1, \ldots , t_{n+2}), (u_1, \ldots, u_{n+2}) $ are in $\Red_T(c)$ and  $(\hat{t}_1, \ldots , \hat{t}_{n+2}), (\hat{u}_1, \ldots, \hat{u}_{n+2}) $ in $\Red_{\hat{T}}(\hat{c})$. Moreover, if $\sigma $ is an element in the braid group $\mathcal{B}_{n+2}$, then $\sigma$ maps $(t_1, \ldots , t_{n+2})$ onto $(u_1, \ldots, u_{n+2}) $ if and only if $\sigma$ maps  $(\tilde{t}_1, \ldots, \tilde{t}_{n+2})$ onto $(\tilde{u}_1, \ldots, \tilde{u}_{n+2})$ by Lemma~\ref{Lem:ReflectionsW-tildeW} (c). Hence $\varphi$ maps the Hurwitz orbit Red$_{\tilde{T}}(\tilde{c})$ bijectively onto a Hurwitz orbit of $c$. The assertion follows if $\Phi$ is of type $E_n^{(1,1)}$ with  $n \in \{6,7,8\}$ by \cite[Theorem~1.3]{BWY21}, as the image of $\tilde{c} = \tilde{s}_1 \cdots \tilde{s}_{t-1} \tilde{s}_{t+1} \cdots \tilde{s}_n \tilde{s}_0  \tilde{s}_t \tilde{s}_{t^\star}$  under $\varphi$ is a generating factorization.
 
If $\Phi$ is of type $D_4^{(1,1)}$, then the calculation in $\tilde{W}$
\begin{align*} \tilde{s}_{\alpha_1-a} \tilde{s}_{\alpha_3-a} \tilde{s}_{\alpha_4-a} \tilde{s}_{\widetilde{\alpha} +a -b} \tilde{s}_2 \tilde{s}_{{\alpha_2} -a} = 
\begin{bmatrix} s_{1} \\0 \\ -\alpha_1 \\ \end{bmatrix} \cdot 
\begin{bmatrix} s_{3} \\0\\ -\alpha_3 \end{bmatrix} \cdot 
\begin{bmatrix} s_{4} \\0 \\-\alpha_4 \end{bmatrix} \cdot 
\begin{bmatrix} s_{\widetilde{\alpha}} \\ -\widetilde{\alpha} \\ \widetilde{\alpha} -b\end{bmatrix} \cdot 
\begin{bmatrix} s_{2} \\ 0 \\ 0\end{bmatrix} \cdot 
\begin{bmatrix} s_{2} \\ 0\\-\alpha_2 \end{bmatrix} &= 
\begin{bmatrix} s_1s_3s_4s_{\widetilde{\alpha}} \\ -\widetilde{\alpha} \\ 
\alpha_2 -b
\end{bmatrix}\\ &\neq \begin{bmatrix} s_1s_3s_4s_{\widetilde{\alpha}} \\ -\widetilde{\alpha} \\ 
\alpha_2 
\end{bmatrix} = \tilde{c}
\end{align*}
yields that there are two Hurwitz orbits on  $\Redd(c)$, and that $\Red_{\tilde{T}}(\tilde{c})$ is mapped  bijectively onto the Hurwitz orbit of  $\Redd(c)$, which contains $(s_1,s_3,s_4, s_0, s_2, s_{2^\star})$, as the product of the preimages of the reflections equals $\tilde{c}$.
\end{proof}

\section{An application}\label{Sec:Application}

In this section, we apply our results to the category $\COH(\XX)$ by proving Theorem \ref{conj:WeightProjElliptic} and thus improving Theorem~1.2, one of the main results of \cite{BWY21}. All the notation concerning $\COH(\XX)$ is explained in \cite{BWY21}. We also recall that $\tilde{W}$ is the hyperbolic cover of the tubular extended Weyl group $W$, and that $\tilde{c}$ is the Coxeter transformation in $\tilde{W}$ related to the Coxeter transformation $c$ in $W$ (see Section~\ref{CoxeterTransMulti}).

\begin{proof}[\textbf{Proof of Theorem \ref{conj:WeightProjElliptic}}]
We adapt the proof of \cite[Theorem~1.2]{BWY21} to our situation. Let $\XX$ be of tubular type. Then the extended Weyl group $W:= W(\XX)$ attached to $\XX$ is an elliptic Weyl group as shown in \cite{BWY21}.

Due to Corollary~\ref{Cor:HurwitzW-tildeW} we can consider the Hurwitz orbit Red$_{\tilde{T}}(\tilde{c})$ instead of the orbit $\Redd(c)$ in the types $E_n^{ (1,1)}$ (for $n = 6,7,8$) and instead of one of the two orbits in type $D_4^{(1,1)}$, and the arguments given in the proof of Theorem~1.2 in \cite{BWY21} work also for the hyperbolic cover $\tilde{W}$ of $W$. In the following we point out the differences. The major difference between the two proofs is that we do not need condition (b) of Theorem~1.2 anymore. Every reflection factorization of $\tilde{c}$ in $\tilde{W}$ automatically generates $\tilde{W}$.
 
Let $(E_1, \ldots , E_{n+2})$ be a complete exceptional sequence in $\COH(\XX)$. By \cite[Lemma~3.9, Corollary~3.10]{BWY21}  we have $c=s_{[E_1]} \cdots s_{[E_{n+2}]}$.

For $U=\Thi(E_1, \ldots , E_r)$, where $r \leq n+2$,  put $\cox(U):=\tilde{s}_{[E_1]} \cdots \tilde{s}_{[E_r]} \in \tilde{W}$. This map is well defined  by exactly the same  proof as given in \cite[Section~4.3]{BWY21}. 
 
The injectivity also follows analogously; it should just be noted that we do not need condition (b) anymore, as every factorization in Red$_{\tilde{T}}(\tilde{c})$  generates $\tilde{W}$. The map $\cox(-)$ is surjective by Corollary~4.9 of \cite{BWY21} and Corollary~\ref{Cor:HurwitzW-tildeW}. Finally $\cox(-)$ is order preserving by exactly the same argument as given in the last paragraph of the proof of Theorem~1.2 in \cite{BWY21}.
\end{proof}
%\newpage
%\nocite{*}

\section{Appendix: The hyperbolic cover of a Coxeter system}

In this section, we introduce the concept of a hyperbolic cover $(\tilde{W}, \tilde{S})$ of a Coxeter system $(W,S)$, and prove that we do not obtain a new system, i.e. $\tilde{W} \cong W$. We apply this result to show that neither an elliptic Weyl group nor its hyperbolic cover are Coxeter groups.

Let $(W,S)$ be a Coxeter system of rank $n$ with Tits' representation $(\rho, V, (-\mid -))$, where $V$ is an $\RR$-space, $\rho$ a linear representation $\rho: W \rightarrow \GL(V)$ and $(-\mid -)$ the related symmetric bilinear form. Let $(m_1,m_2,m_0)$ be the signature of $(-\mid -)$.

Furthermore, let $\Phi$ be the root system of the Tits' representation and $B = \{\alpha_1 , \ldots, \alpha_n\}$ be a fundamental system for $\Phi$ such that $S = \{s_1, \ldots, s_n\}$ and $\rho(s_i)$ is the reflection of $V$ with respect to $\alpha_i$. In the following, we will identify $s_i$ and $\rho(s_i)$.

\subsection{The hyperbolic extended space  of a Coxeter system}

A \defn{hyperbolic extended space} of $ (\rho, V,(-\mid -)) $ (or simply of $(V, (- \mid -)$) is an $\RR$-space $(\tilde{V}, (-\mid -))$ of signature $(\tilde{m}_1, \tilde{m}_2, \tilde{m}_0)$ that extends $(V, (-\mid -))$ so that there is $x \in \NN_0$ with $\tilde{m}_i = m_i +x$ for $i = 1,2$ and $\tilde{m}_0 = m_0 -x$.

\begin{Lemma}
A hyperbolic cover of $(V, (-\mid -))$ exists for $0 \leq x \leq m_0$.  
\end{Lemma}
\begin{proof}
This follows with the analogous proof given in Section~\ref{Sec:HyperbolicExtended}.    
\end{proof}

\begin{remark}\label{Rem:HyperCoverRad0}
Notice that the (unique) hyperbolic extended space of $(V,(-\mid -))$ where the form is non-degenerate is the space $(V,(-\mid -))$ itself (here $x= 0 $).
\end{remark}

Let $X$ be a basis of $V$ such that the Gram matrix of the symmetric bilinear form $(-\mid -)$ has diagonal form with respect to $X$, and let $V_+$ and $V_-$ be the $\RR$-span of the vectors of $v \in X$ such that $(v \mid v) > 0$ and $(v \mid v) < 0$, respectively. Further, let $X_0:= \{v \in X \mid (v\mid v) = 0\}$, which then is a basis for the radical $R$ of $V$. 

Let $(\tilde{V}, (- \mid -))$ be a hyperbolic cover of $(V, (-\mid -))$ of signature $(\tilde{m}_1,\tilde{m}_2,\tilde{m}_0)$ with $x:= m_0 - \tilde{m}_0$, and let $\tilde{R} \subseteq R$ be the radical of the form $(-\mid -)$ in $\tilde{V}$. Notice that we can (and do) choose $X$ so that it contains a basis $\tilde{X}_0$ of $\tilde{R}$. Let $X_0\setminus{\tilde{X}_0} = \{b_1, \ldots , b_x\}$.

\begin{Lemma}\label{Lem:StructureHypExtension}
Let $(\tilde{V}, (- \mid -))$ be a hyperbolic extension of $(V, (-\mid -))$ of signature $(\tilde{m}_1,\tilde{m}_2,\tilde{m}_0)$ with $x:= m_0 - \tilde{m}_0$. Then there exist vectors $b_1', \ldots , b_x' \in \tilde{V}$ such that $(b_i, b_i')$ forms a hyperbolic pair for $1 \leq i \leq x$, and such that $\tilde{V}$ decomposes as follows
$$\tilde{V} = V_+ \oplus V_- \oplus \spanr(b_1, b_1') \oplus \cdots  \oplus \spanr(b_x, b_x') \oplus \spanr(\tilde{X}_0).$$
\end{Lemma}
\begin{proof}
We continue to use the notation introduced before the lemma. Set 
$$V_1:= V_+ \oplus V_- \oplus \cdots\oplus \spanr(X_0\setminus{\{b_1\}})) \subset V \subset \tilde{V} .$$ 
In the non-degenerate  space $\tilde{V}/\tilde{R}$ we calculate that $V_1^\perp$ and $V^\perp$ are of dimensions $x +1$ and $x$, respectively. This shows that there is a vector $b_1' \in \tilde{V}$ in $V_1^\perp$ such that $(b_1\mid b_1') = 1$. Then, by an argument similar to that in the proof of Lemma~\ref{HyperbolicExtension} we may assume that $b_1'$ is an isotropic vector and $(b_1, b_1')$ a hyperbolic pair. Next set 
$$V_2:= V_+ \oplus V_- \oplus \spanr(b_1, b_1') \oplus \spanr(X_0\setminus{\{b_1, b_2\}})) \subseteq \tilde{V}.$$ 
Then $V_2$ is of dimension $\dim(V)$ and therefore $V_2^\perp$ and $(V_2\oplus \RR b_2)^\perp$ are in $\tilde{V}$ of dimension $x$ and $x-1$, respectively. Hence, we find a vector $b_2' \in V_2^\perp$ such that $(b_2, b_2')$ is a hyperbolic pair. We continue this procedure by considering the subspaces $V_i$ for $1 \leq i \leq x$, which are of dimension $\dim(V)+ i -2$, and by constructing the vectors $b_3', \ldots ,b_x'$. As the dimension of the right vector space in the equality shown in the lemma statement is $\dim(V) + x$, the equality claimed is true.
\end{proof}

\subsection{The hyperbolic cover of a Coxeter system}

The definition of a hyperbolic cover of a Coxeter system is analogous to that of a hyperbolic cover of an elliptic Weyl group.

Let $(\tilde{V}, (-\mid -))$ be a hyperbolic extended space of $(V, (-\mid -))$. Then we have $\Phi \subset V \subset \tilde{V}$ and we can consider the reflection $s_i$ of $V$ with respect to $\alpha_i \in B$ as a reflection $\tilde{s}_i$ of $\tilde{V}$ with respect to $\alpha_i$. We set 
$$ \tilde{S}  = \{\tilde{s}_i \mid 1 \leq i \leq n \}~\mbox{and}~\tilde{W} :=  \langle \tilde{S} \rangle,$$
and call $(\tilde{W}, \tilde{S})$ a \defn{hyperbolic cover of the Coxeter system} $(W,S)$. If the sets $S$ and $\tilde{S}$ are clear from the context, we will also simply speak of the hyperbolic cover $\tilde{W}$ of $W$.

\begin{Proposition}\label{Thm:HyperCoverCoxeter}
Every hyperbolic cover of a Coxter system is isomorphic to the Coxeter system.
\end{Proposition}
\begin{proof}
Let $(\tilde{W}, \tilde{S})$ be a hyperbolic cover of a Coxeter system $(W,S)$ with respect to the Tits' representation $(\rho,V, (-\mid -))$ of $(W,S)$. 

Let $S = \{s_1, \ldots, s_n\}$. The definition of the reflections $\tilde{s}_i$ yields that the map $ \tilde{S} \rightarrow S$, $\tilde{s}_i \mapsto (\tilde{s}_i)_{\mid V} = s_i$ induces an epimorphism $\varphi$ from $\tilde{W}$ to $W$. The proof strategy is to show that the map $\psi$ sending $s_i$ to $\tilde{s}_i$ defines a homomorphism $\psi$ from $W$ to $\tilde{W}$.

As $(W,S)$ is a Coxeter system, there exist $m_{ij} \in \NN \cup \{\infty\}$, for $1 \leq i,j \leq n$, with $m_{ii}= 1$ and $m_{ij}= m_{ji}$ such that 
$$W = \langle S \mid (s_is_j)^{m_{ij}} = 1, 1 \leq i,j \leq n\rangle.$$ 
We check that the elements in $\tilde{S}$ satisfy these relations.

As $\tilde{s}_i$ is a reflection of $\tilde{V}$, we get $\tilde{s}_i^2 = 1$, for $1 \leq i \leq n$. It remains to show that $(\tilde{s}_i\tilde{s}_j)^{m_{ij}} =1$, for $i \neq j$. If $m_{ij} = \infty$, then $(\tilde{s}_i\tilde{s}_j)_{\mid V} = s_is_j$ is of infinite order and therefore $\tilde{s}_i\tilde{s}_j$ is also of infinite order.

So, assume that $m_{ij} < \infty$. Let $b_1', \ldots ,b_x' \in \tilde{V}$ be vectors as stated in Lemma~\ref{Lem:StructureHypExtension}. As $(\tilde{s}_i\tilde{s}_j)_{\mid V} = s_is_j$ we need to show that $(\tilde{s}_i\tilde{s}_j)^{m_{ij}}(b_k') = b_k'$ is true for $1 \leq k \leq x$. Set $d:= s_is_j$ and $\tilde{d}:= \tilde{s}_i\tilde{s}_j$. And let $\alpha_i,\alpha_j \in \Phi$ be roots related to $s_i$ and $s_j$, respectively. Then $\tilde{d}(b_k') = b_k' +u$ for some $u$ in the plane $P_{ij}:= \spanr(\alpha_i, \alpha_j)$ by the definition of the reflections $\tilde{s}_i$ and $\tilde{s}_j$. We conclude
$$\tilde{d}^2(b_k') = \tilde{d}(b_k') + \tilde{d}(u) = b_k' + u + \tilde{d}(u)$$
and 
$$\tilde{d}^{m_{ij}}(b_k') = b_k' + u + \tilde{d}(u) + \cdots + \tilde{d}^{m_{ij}-1}(u).$$ 
The linear map $\tilde{d}$ acts on $P_{ij}$ as the rotation $d$. In particular, $\tilde{d}$ fixes only the $0$-vector in $P_{ij}$. Thus, as
$$d(u+ d(u) + \cdots + d^{m_{ij}-1}(u)) = u+ d(u) + \cdots + d^{m_{ij}-1}(u), $$ 
we get 
$$0 =u+ d(u) + \cdots + d^{m_{ij}-1}(u) = u+ \tilde{d}(u) + \cdots + \tilde{d}^{m_{ij}-1}(u),$$ 
and $\tilde{d}^{m_{ij}}(b_k') = b_k'$.

Hence $\psi$ defines a homomorphism from $W$ to $\tilde{W}$, and we get  $\psi \circ \varphi = id_{\tilde{W}}$ and $\varphi\circ \psi = id_W$, which shows that $(\tilde{W}, \tilde{S} )$ and $(W,S)$ are isomorphic.
\end{proof}

\bibliography{mybibDerived2}

\providecommand{\bysame}{\leavevmode\hbox to3em{\hrulefill}\thinspace}
\providecommand{\MR}{\relax\ifhmode\unskip\space\fi MR }
% \MRhref is called by the amsart/book/proc definition of \MR.
\providecommand{\MRhref}[2]{%
  \href{http://www.ams.org/mathscinet-getitem?mr=#1}{#2}
}
\providecommand{\href}[2]{#2}
\begin{thebibliography}{10}

\bibitem{BA97}
Bruce~N. Allison, Saeid Azam, Stephen Berman, Yun Gao, and Arturo Pianzola, \emph{Extended affine {L}ie algebras and their root systems}, Mem. Amer. Math. Soc. \textbf{126} (1997), no.~603, x+122. \MR{1376741}

\bibitem{BMN24}
Barbara Baumeister, Jon McCammond, and Georges Neaime, \emph{The extended affine artin group of type ${D}_4$ and its dual presentation}, in preparation.

\bibitem{BWY21}
Barbara Baumeister, Patrick Wegener, and Sophiane Yahiatene, \emph{Extended {W}eyl groups, {H}urwitz transitivity and weighted projective lines {I}: Generalities and the tubular case}, arXiv e-prints (2024), arxiv:1808.05083.

\bibitem{BWY24}
\bysame, \emph{Extended {W}eyl groups, {H}urwitz transitivity and weighted projective lines {II}: a uniform approach}, arXiv e-prints (2025), arXiv:2104.07075.

\bibitem{Bes03}
David Bessis, \emph{The dual braid monoid}, Ann. Sci. \'{E}cole Norm. Sup. (4) \textbf{36} (2003), no.~5, 647--683. \MR{2032983}

\bibitem{ChaDa94}
Ruth Charney and Michael~W. Davis, \emph{Finite {$K(\pi, 1)$}s for {A}rtin groups}, Prospects in topology ({P}rinceton, {NJ}, 1994), Ann. of Math. Stud., vol. 138, Princeton Univ. Press, Princeton, NJ, 1995, pp.~110--124.

\bibitem{Del72}
Pierre Deligne, \emph{Les immeubles des groupes de tresses g\'en\'eralis\'es}, Invent. Math. \textbf{17} (1972), 273--302.

\bibitem{Ebe19}
Wolfgang Ebeling, \emph{Distinguished bases and monodromy of complex hypersurface singularities}, Handbook of geometry and topology of singularities. {I}, Springer, Cham, [2020] \copyright 2020, pp.~449--490. \MR{4261558}

\bibitem{GL87}
Werner Geigle and Helmut Lenzing, \emph{A class of weighted projective curves arising in representation theory of finite-dimensional algebras}, Singularities, representation of algebras, and vector bundles ({L}ambrecht, 1985), Lecture Notes in Math., vol. 1273, Springer, Berlin, 1987, pp.~265--297. \MR{915180}

\bibitem{HK16}
Andrew Hubery and Henning Krause, \emph{A categorification of non-crossing partitions}, J. Eur. Math. Soc. (JEMS) \textbf{18} (2016), no.~10, 2273--2313. \MR{3551191}

\bibitem{Hum90}
James~E. Humphreys, \emph{Reflection groups and {C}oxeter groups}, Cambridge Studies in Advanced Mathematics, vol.~29, Cambridge University Press, Cambridge, 1990. \MR{1066460}

\bibitem{Kac83}
Victor~G. Kac, \emph{Infinite-dimensional {L}ie algebras}, Progress in Mathematics, vol.~44, Birkh\"{a}user Boston, Inc., Boston, MA, 1983, An introduction. \MR{739850}

\bibitem{KL86}
Paul Kluitmann, \emph{Ausgezeichnete {B}asen erweiterter affiner {W}urzelgitter}, Bonner Mathematische Schriften, vol. 185, Universit\"{a}t Bonn, Mathematisches Institut, Bonn, 1987, Dissertation, Rheinische Friedrich-Wilhelms-Universit\"{a}t, Bonn, 1986. \MR{930668}

\bibitem{Len96}
Helmut Lenzing, \emph{A {$K$}-theoretic study of canonical algebras}, Representation theory of algebras ({C}ocoyoc, 1994), CMS Conf. Proc., vol.~18, Amer. Math. Soc., Providence, RI, 1996, pp.~433--454.

\bibitem{Looi80}
Eduard Looijenga, \emph{Invariant theory for generalized root systems}, Invent. Math. \textbf{61} (1980), no.~1, 1--32. \MR{587331}

\bibitem{PS19}
G.~Paolini and M.~Salvetti, \emph{Proof of the $k(\pi,1)$ conjecture for affine artin groups}, Invent. math. (2020).

\bibitem{Sai90}
Kyoji Saito, \emph{Extended affine root systems. {II} (flat invariants)}.

\bibitem{Sai85}
\bysame, \emph{Extended affine root systems. {I}. {C}oxeter transformations}, Publ. Res. Inst. Math. Sci. \textbf{21} (1985), no.~1, 75--179. \MR{780892}

\bibitem{ST97}
Kyoji Saito and Tadayoshi Takebayashi, \emph{Extended affine root systems. {III}. {E}lliptic {W}eyl groups}, Publ. Res. Inst. Math. Sci. \textbf{33} (1997), no.~2, 301--329. \MR{1442503}

\bibitem{STW16}
Yuuki Shiraishi, Atsushi Takahashi, and Kentaro Wada, \emph{On {W}eyl groups and {A}rtin groups associated to orbifold projective lines}, J. Algebra \textbf{453} (2016), 249--290. \MR{3465355}

\bibitem{ST89}
Ernst Snapper and Robert~J. Troyer, \emph{Metric affine geometry}, second ed., Dover Books on Advanced Mathematics, Dover Publications, Inc., New York, 1989. \MR{1034484}

\bibitem{Lek}
Harm van~der Lek, \emph{Extended {A}rtin groups}, Singularities, {P}art 2 ({A}rcata, {C}alif., 1981), Proc. Sympos. Pure Math., vol.~40, Amer. Math. Soc., Providence, RI, 1983, pp.~117--121. \MR{713240}

\bibitem{Lek2}
Harm van~der Lek, \emph{The homotopy type of complex hyperplane complements}, Ph.D. thesis, University of Nijmegen, 1983.

\bibitem{Yamada}
Hiroshi Yamada, \emph{Elliptic root system and elliptic {A}rtin group}, no. 1086, 1999, Topological field theory and related mathematical topics (Japanese) (Kyoto, 1996), pp.~57--70. \MR{1752690}

\end{thebibliography}
\bibliographystyle{amsplain}
\end{document}